\theoremstyle{definition}
\newtheorem{dfn}{Definition}[section]
\theoremstyle{plain}
\newtheorem{thm}[dfn]{Theorem}
\newtheorem{prop}[dfn]{Proposition}
\newtheorem{lem}[dfn]{Lemma}
\newtheorem{cor}[dfn]{Corollary}
\newcommand\ABG{\omega_{\mathrm{ABG}}}
\newcommand\ABGtr{\omega^{\text{\rm trace}}_{\mathrm{ABG}}}
\newcommand\Ad{\mathrm{Ad}}
\newcommand\bC{\mathbb{C}}
\newcommand\bD{{\overline{D}}}
\newcommand\bH{\mathbb{H}}
\newcommand\bL{\mathbb{L}}
\newcommand\bp{\mathbf{p}}
\newcommand\bR{\mathbb{R}}
\newcommand\Btr{B^{\text{trace}}}
\newcommand\bZ{\mathbb{Z}}
\newcommand\cB{\mathcal{B}}
\newcommand\cP{\mathcal{P}}
\newcommand\cS{\mathcal{S}}
\newcommand\inv{^{-1}}
\newcommand\la{{\lambda}}
\newcommand\op{^{\mathrm{op}}}
\newcommand\pa{\partial}
\newcommand\PSL{PSL_2(\mathbb{R})}
\newcommand{\rot}{\mathrm{rot}}
\newcommand\Sg{{\Sigma_g}}
\newcommand\SL{SL_2(\mathbb{R})}
\newcommand\SP{{\Sigma_{\mathcal{P}}}}
\newcommand\sltwo{{\mathfrak{sl}_2(\mathbb{R})}}
\newcommand\sltwoc{{\mathfrak{sl}_2(\mathbb{C})}}
\newcommand\Tg{\mathcal{T}_g}
\newcommand\trace{\mathrm{trace}}
\newcommand\WP{\omega_{\mathrm{WP}}}
\newcommand\zb{\overline{z}}
\newcommand{\twovector}[2]{\begin{pmatrix}{#1}\\ {#2}\end{pmatrix}}
\newcommand{\twomatrix}[4]{\begin{pmatrix}
{#1} & {#2} \\ {#3} & {#4}\end{pmatrix}}
\title{A topological proof of Wolpert's formula 
for the Weil-Petersson symplectic form 
in terms of the Fenchel-Nielsen coordinates
\thanks
{2020 Mathematics Subject Classification: Primary 32G15; Secondary 
57M05.
\quad 
Keywords: Teichm\"uller space, Fenchel-Nielsen coordinates, Weil-Petersson symplectic form, groupoid cocycle 
}
}
\author{Nariya Kawazumi\thanks{Department of Mathematical Sciences, University of Tokyo, 3-8-1 Komaba, Meguro-ku, Tokyo 153-8914, Japan \texttt{e-mail:kawazumi@ms.u-tokyo.ac.jp}}
}
\begin{document}

\maketitle

\begin{abstract}
We introduce a natural cell decomposition of a closed oriented surface associated with a pants decomposition, and 
an explicit groupoid cocycle on the cell decomposition which represents each point of the Teichm\"uller space $\Tg$. We call it the {\it standard cocycle}
of the point of $\Tg$. As an application of the explicit description of 
the standard cocycle, we obtain a topological proof of Wolpert's formula for the Weil-Petersson symplectic form in terms of the Fenchel-Nielsen coordinates associated with the pants decomposition.
%By introducing an explicit groupoid cocycle associated with a pants decomposition, which represents each point of the Teichm\"uller space of a closed oriented surface, we give a topological proof of Wolpert's formula for the Weil-Petersson symplectic form in terms of the Fenchel-Nielsen coordinates associated with the pants decomposition.
\end{abstract}

\begin{center}
{\bf Introduction}
\end{center}

The Fenchel-Nielsen coordinates $(l_i, \tau_i)^{3g-3}_{i=1}: \Tg \to 
(\bR_{>0}\times\bR)^{3g-3}$ on the Teichm\"uller space $\Tg$ of an oriented closed surface $\Sg$ of genus $g \geq 2$ is associated with a pants decomposition $\cP = \{\delta_i
\}^{3g-3}_{i=1}$ of the surface $\Sg$, and 
is a diffeomorphism onto the space $(\bR_{>0}\times\bR)^{3g-3}$,
{where $\delta_i$'s are mutually disjoint simple closed curves on $\Sg$.}
Wolpert \cite{W1, W2, W3} described the Weil-Petersson symplectic form 
$\WP$ on the space $\Tg$ in terms of the coordinates
\begin{equation}\label{eq:Wformula}
\WP = \sum^{3g-3}_{i=1} d\tau_i\wedge dl_i.
\end{equation}
\par
In the present paper, we introduce 
a natural cell decomposition $\SP$ of the surface $\Sg$ associated with the decomposition $\cP$ {as in Figure 1,} and an explicit groupoid $1$-cocycle on the cell decomposition $\SP$ which represents each point of the Teichm\"uller space $\Tg$. 

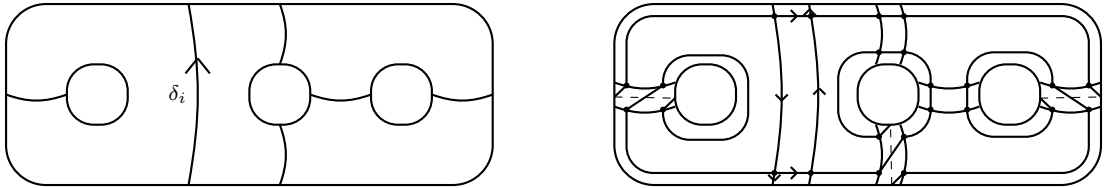
\begin{figure}[h]
\begin{center}
\begin{tikzpicture}[scale=0.8]
\begin{scope}
\tikzset{Process/.style={rectangle,  draw,  text centered, text width=37mm, minimum height=1.5cm}};
\draw[thick, rounded corners=15pt] (1,0) -- (8,0) -- (8,3) -- (0,3) -- (0,0) -- (1,0);
\draw[thick, rounded corners=10pt] (1,1) rectangle (2,2);
\draw[thick, rounded corners=10pt] (4,1) rectangle (5,2);
\draw[thick, rounded corners=10pt] (6,1) rectangle (7,2);
\draw[thick] (0,1.5) to[out=-20, in=-160] (1,1.5);
%\draw[thick] (0.4,1.2) -- (0.6,1.4);
%\draw[thick] (0.4,1.6) -- (0.6,1.4);
\draw[thick] (5,1.5) to[out=-20, in=-160] (6,1.5);
%\draw[thick] (5.4,1.2) -- (5.6,1.4);
%\draw[thick] (5.4,1.6) -- (5.6,1.4);
\draw[thick] (7,1.5) to[out=-20, in=-160] (8,1.5);
%\draw[thick] (7.6,1.2) -- (7.4,1.4);
%\draw[thick] (7.6,1.6) -- (7.4,1.4);
\draw[thick] (4.5,3) to[out=-70, in=70] (4.5,2);
%\draw[thick] (4.4,2.6) -- (4.6,2.4);
%\draw[thick] (4.8,2.6) -- (4.6,2.4);
\draw[thick] (4.5,1) to[out=-70, in=70] (4.5,0);
%\draw[thick] (4.4,0.6) -- (4.6,0.4);
%\draw[thick] (4.8,0.6) -- (4.6,0.4);
\draw[thick] (3,3) to[out=-80, in=80]  node[auto=right]{\scriptsize $\delta_i$} (3,0);
\draw[thick] (2.95,1.85) -- (3.15,2.1);
\draw[thick] (3.35,1.85) -- (3.15,2.1);
\end{scope}
\begin{scope}[xshift=10cm]
\draw[thick, rounded corners=15pt] (1,0) -- (8,0) -- (8,3) -- (0,3) -- (0,0) -- (1,0);
\draw[thick, rounded corners=10pt] (1,1) rectangle (2,2);
\draw[thick, rounded corners=10pt] (4,1) rectangle (5,2);
\draw[thick, rounded corners=10pt] (6,1) rectangle (7,2);
\draw[thick] (0,1.7) to[out=-20, in=-160] (1,1.7);
\draw[thick] (0,1.3) to[out=-20, in=-160] (1,1.3);
\draw[thick] (0,1.46) -- (0.2,1.65); 
\draw[thick] (0.8,1.25) -- (1,1.44); 
\draw[dashed] (0,1.46) -- (1.0,1.44); 
\draw[thick] (0.2,1.25) -- (0.8,1.65); 
\draw[thick, rounded corners=10pt] (0.8,1.65) -- (0.8,2.15) --
(2.2, 2.15) -- (2.2,0.75) -- (0.8,0.75) -- (0.8,1.25);
\draw[thick, rounded corners=10pt] (0.2,1.65) -- (0.2,2.8) --
(2.65, 2.8);
\draw[thick, rounded corners=10pt] (0.2,1.25) -- (0.2,0.2) --
(2.65, 0.2);
\draw[thick] (5,1.7) to[out=-20, in=-160] (6,1.7);
\draw[thick] (5,1.3) to[out=-20, in=-160] (6,1.3);
\draw[thick] (5.2,1.65) -- (5.2,1.25); 
\draw[thick] (5.8,1.65) -- (5.8,1.25); 
\draw[thick] (7,1.7) to[out=-20, in=-160] (8,1.7);
\draw[thick] (7,1.3) to[out=-20, in=-160] (8,1.3);
\draw[thick] (7,1.44) -- (7.2,1.25); 
\draw[thick] (7.8,1.65) -- (8.0,1.46); 
\draw[dashed] (7,1.44) -- (8.0,1.46); 
\draw[thick] (7.2,1.65) -- (7.8,1.25); 
\draw[thick] (4.7,3) to[out=-70, in=70] (4.7,2);
\draw[thick] (4.3,3) to[out=-70, in=70] (4.3,2);
\draw[thick] (4.7,1) to[out=-70, in=70] (4.7,0);
\draw[thick] (4.3,1) to[out=-70, in=70] (4.3,0);
\draw[thick] (3.2,3) to[out=-80, in=80] (3.2,0);
\draw[thick] (2.6,3) to[out=-80, in=80] (2.6,0);
%\draw[thick] (2.6,3) to[out=-80, in=80] node[auto=left][font=\tiny]{$e^2_{((i))0}$}(2.6,0);
\draw[thick] (2.9,2.9) -- (3.0,2.8);
\draw[thick] (2.9,2.7) -- (3.0,2.8);
\draw[thick] (2.9,0.1) -- (3.0,0.2);
\draw[thick] (2.9,0.3) -- (3.0,0.2);
\draw[thick] (2.65,1.5) -- (2.75,1.4);
\draw[thick] (2.85,1.5) -- (2.75,1.4);
\draw[thick] (2.53,0.17) -- (2.63,0.07);
\draw[thick] (2.73,0.17) -- (2.63,0.07);
\draw[thick] (3.27,1.5) -- (3.37,1.6);
\draw[thick] (3.47,1.5) -- (3.37,1.6);
\draw[thick] (3.11,2.81) -- (3.21,2.91);
\draw[thick] (3.31,2.81) -- (3.21,2.91);
\draw[thick] (2.65,2.8) -- (4.77,2.8);
\draw[thick] (4.35,2.2) -- (4.76,2.2);
\draw[thick] (4.35,0.8) -- (4.54,1.0);
\draw[thick] (2.65,0.2) -- (4.35,0.2);
\draw[thick] (4.35,0.2) -- (4.76,0.8);
\draw[thick] (4.56,0.0) -- (4.76,0.2);
\draw[dashed] (4.56,0.0) -- (4.54,1.0);
\draw[thick, rounded corners=10pt] (4.35,0.8) -- 
(3.68,0.8) -- (3.68,2.2) -- (4.35,2.2);
\draw[thick, rounded corners=10pt] (4.76,0.2) -- (7.8,0.2) --
(7.8, 1.25);
\draw[thick, rounded corners=10pt] (4.76,2.8) -- (7.8,2.8) --
(7.8, 1.65);
\draw[thick, rounded corners=10pt] (4.76,0.8) -- (5.2,0.8) --
(5.2, 1.25);
\draw[thick, rounded corners=10pt] (4.76,2.2) -- (5.2,2.2) --
(5.2, 1.65);
\draw[thick, rounded corners=10pt] (5.8,1.25) -- 
(5.8,0.8) -- (7.2,0.8) --(7.2, 1.25);
\draw[thick, rounded corners=10pt] (5.8,1.65) -- 
(5.8,2.2) -- (7.2,2.2) --(7.2, 1.65);
\fill 
(0.2,1.65) circle (1.4pt) (0.8,1.65) circle (1.4pt)
(0.2,1.25) circle (1.4pt) (0.8,1.25) circle (1.4pt)
(5.2,1.65) circle (1.4pt) (5.8,1.65) circle (1.4pt)
(5.2,1.25) circle (1.4pt) (5.8,1.25) circle (1.4pt)
(7.2,1.65) circle (1.4pt) (7.8,1.65) circle (1.4pt)
(7.2,1.25) circle (1.4pt) (7.8,1.25) circle (1.4pt);
\fill 
(4.76,0.2) circle (1.4pt) (4.76,2.8) circle (1.4pt)
(4.35,0.2) circle (1.4pt) (4.35,2.8) circle (1.4pt)
(3.23,0.2) circle (1.4pt) (3.23,2.8) circle (1.4pt)
(2.63,0.2) circle (1.4pt) (2.63,2.8) circle (1.4pt);
\fill
(4.76,0.8) circle (1.4pt) (4.76,2.2) circle (1.4pt)
(4.35,0.8) circle (1.4pt) (4.35,2.2) circle (1.4pt);
\end{scope}
\end{tikzpicture}\par\medskip\noindent
\end{center}
\label{fig:PD0}
\caption{a pants decomposition and one of its associated cell decompositions}
\end{figure}

{We have two kinds of $2$-cells: hexagons and squares. Each pair of pants includes $2$ hexagons, while each  $\delta_i$ involves $2$ squares.}
The cell decomposition $\SP$ is not canonical, since there remain ambiguities coming from the Dehn twists along the simple closed curves $\delta_i$'s. By choosing one of the cell decompositions, 
we obtain a branch of the twisting parameter $\tau_i$ for each $\delta_i$, $1 \leq i \leq 3g-3$. So it enables us to define the parameter $\tau_i$ without using the simple connectivity 
of the Teichm\"uller space $\Tg$. (Compare \cite{IT}[\S3.2.4, p.63].)
\par
In general, any $1$-cocycle on a cell complex admits the ambiguity coming from any gauge transformation on its vertices. In the cell decomposition $\SP$, each vertex is included in some pair of pants. So, if we fix a $1$-cocycle on each pair of pants, then there remains no ambiguity for the $1$-cocycle. We introduce a normalization condition on a $1$-cocycle for any hyperbolic pair of pants to obtain a unique $1$-cocycle on $\SP$ for each point of the Teichm\"uller space $\Tg$.  We call both of them the {\it standard cocycles}.\par
The purpose of the present paper is to show how our standard cocycle illuminates some properties of the Teichm\"uller space $\Tg$.
Through the standard cocycle, the holonomy of any hyperbolic structure on the surface $\Sg$ is explicitly given in terms of the Fenchel-Nielsen coordinates. Moreover it leads us a topological proof of Wolpert's formula \eqref{eq:Wformula}. 
Here we follow the description of the form $\WP$ given in the seminal papers \cite{G1,G2} by Goldman. 
{As was shown by Goldman \cite{G1}, the Weil-Petersson symplectic form $\WP$ in Wolpert \cite{W2} equals the Atiyah-Bott-Goldman symplectic form $\ABGtr$ defined as a cup product on the twisted cohomology group with the trace form on $\sltwo$ up to some multiplicative constant. It is proved by some arguments due to \cite{A, H, W1}, as was stated by \cite[p.216]{G1}.
More precisely, it is essentially due to Wolpert's argument \cite[\S4]{W1} based on Hejhal's result \cite{H}. But all the arguments are implicit, and we find no explicit references for this proof. Hence, in \S6, we trace all the arguments carefully to determine the multiplicative constant: $\WP = 2\ABGtr$, which matches our computation with Wolpert's result \ref{eq:Wformula}.}\par
In our computation the form $\WP$ localizes near the simple closed curves $\delta_i$'s. Here it should be remarked that this is a special phenomenon which holds only for $\PSL$. As was already pointed out by Goldman \cite{G3}, Wienhard-Zhang \cite{WZ}, Sun-{Wienhard}-Zhang \cite{SWZ} {and Sun-Zhang \cite{SZ}}, if $n \geq 3$, the Atiyah-Bott-Goldman symplectic form for $SL_n(\bR)$ requires not only the information along the simple closed curves $\{\delta_i\}^{3g-3}_{i=1}$, but also some information inside of each pair of pants. {In fact, H. C. Kim \cite{Kim99} establishes Wolpert's formula on the moduli of real projective structures ($n=3$) involved with `internal parameters'. As the referee points out, squares in our decomposition have some similarity to another square bounded by blue edges and green dotted bridges in \cite[Figure 16]{SZ}. In order to generalize our result to $SL_n(\bR)$ for $n \geq 3$, it seems required to find some new insights about hexagons. }
\par
The present paper is organized as follows. 
In \S1, we introduce the notion of the standard cocycle for any hyperbolic structure on a pair of pants $P = \Sigma_{0,3}$ with geodesic boundary, and prove its existence and uniqueness (Lemma \ref{prop:std1}). The structure is determined by the geodesic length $l_k$ of each boundary component 
$\pa_kP$, $k \in \bZ/3= \{0,1,2\}$. So we compute the cocycle in terms of the lengths $l_0$, $l_1$ and $l_2$ explicitly (Theorem \ref{thm:A_k}). Our computation in \S1 is quite classical, and follows those given by Keen\cite{Keen1, Keen2}.  In \S2, we introduce a cell decomposition $\SP$ of $\Sg$ associated with the pants decomposition $\cP$. 
%It should be remarked that the cell decomposition $\SP$ kills the ambiguities coming from the Dehn twists along the simple closed curves $\delta_i$'s in $\cP$. 
This enables us to define the twisting parameter $\tau_i: \Tg \to \bR$ without using the fact $\pi_1(\Tg) = 1$. More precisely, 
planting each pair of pants with the standard cocycle, 
we construct a groupoid $1$-cocycle on $\SP$ representing each point $[\rho] \in \Tg$, which we also call the {\it standard cocycle} for $[\rho]$ on $\SP$. 
Then the value $\tau_i([\rho])$ equals $(-2)$ times the logarithm of some coefficient of the groupoid cocycle.  We compute explicitly the first variation of the standard cocycle in \S3.
Using the computation in \S3, we compute explicitly the symplectic form $\WP$ described as a cup product by Goldman \cite{G1,G2} to obtain a topological proof of Wolpert's formula \eqref{eq:Wformula}in \S4. 
{Here we introduce explicit cellular approximations of the diagonal map on two kinds of $2$-cells as in Figures 5 and 6.}
{In \S5}, we discuss lifts of the standard cocycle to $\SL$, namely, the standard cocycle for a spin hyperbolic surface. {In the last section \S6, we prove $\WP = 2\ABGtr$ following Wolpert's argument \cite[\S4]{W1} based on Hejhal's result \cite{H}.}
\par
\medskip
\noindent
{\bf The fundamental groupoid and holonomy homomorphisms}: In this paper, the concatenation of two paths $\gamma_1\gamma_2$ on a space $X$ means that one traverses $\gamma_1$ first, transfers at $\gamma_1(1) = \gamma_2(0)$ and traverses $\gamma_2$. In other words, we consider the dual category $\Pi X\op$ of the fundamental groupoid of $X$. The set of objects in $\Pi X\op$ is $X$, and $\Pi X\op(x_1, x_0) := [([0,1], 0, 1), (X, x_0, x_1)]$, 
the homotopy set of paths running from $x_0$ to $x_1$. The composite is induced by the concatenation stated above. 
As usual, the group $\PSL$ acts on the upper half plane $\bH$ from the left. 
In this paper, we define the holonomy $\rho: \pi_1(X, *) \to \mathrm{Aut}(\widetilde{X}/X)$ of the universal covering $\pi: (\widetilde{X}, \tilde*) \to (X, *)$ by $\tilde \ell (1) = \rho([\ell])(\tilde*)$ for any loop $\ell: ([0,1], \{0,1\}) \to (X, *)$ and its lift $\tilde\ell: ([0,1], 0) \to (X, \tilde*)$. Then our concatenation convention makes the holonomy $\rho$ {\it a group homomorphism}. \par
\bigskip
\noindent
{\bf Acknowledgement}: The author sincerely thanks Tsukasa Ishibashi and Paul Norbury for helpful discussions. In particular, \S\ref{sec:spin} is inspired by discussions with P.N. Moreover he also thanks one of the audiences in his talk at the MFO workshop `Teichm\"uller Theory: Classical, Higher, Super and Quantum' (August 2023) who pointed out the ambiguity of the cell decomposition coming from Dehn twists. 
{Moreover he thanks the anonymous referee for his/her valuable comments to an earlier version.}
The present research is supported in part by the grants JSPS KAKENHI 19H01784%oldKawazumi
, 20H00115%Nakamura
, 22H01125%Ohshika
, 22H01120 %newKawazumi
and the joint project between OIST (Hikami Unit) and University of Tokyo.
\par

%\tableofcontents

\section{The standard cocycle on a pair of pants}

Let $P = \Sigma_{0,3}$ be a pair of pants, namely, a compact connected oriented surface of genus $0$ with $3$ boundary components. Fix a labeling of the boundary components, and denote each of them by $\pa_kP$, $k \in \bZ/3 = \{0,1,2\}$. We consider a cell decomposition of the pair of pants $P$ as in Figure 2. 

\begin{figure}[h]
\begin{center}
\begin{tikzpicture}[scale=0.75]
\tikzset{Process/.style={rectangle,  draw,  text centered, 
text width=37mm, minimum height=1.5cm}};
\coordinate (A) at (0,6);
\coordinate (B) at (14,6);
\coordinate (C) at (0,0);
\coordinate (D) at (14,0);
\coordinate (p) at (2,4);
\coordinate (q) at (6,4);
\coordinate (r) at (2,2);
\coordinate (s) at (6,2);
\coordinate (x) at (8,4);
\coordinate (y) at (12,4);
\coordinate (z) at (8,2);
\coordinate (w) at (12,2);
\coordinate (001) at (0,3);
\coordinate (010) at (2,3);
\coordinate (011) at (6,3);
\coordinate (020) at (8,3);
\coordinate (021) at (12,3);
\coordinate (000) at (14,3);
\fill (000) circle (2pt) (001) circle (2pt)
(010) circle (2pt) (011) circle (2pt)
(020) circle (2pt) (021) circle (2pt);
\draw[thick] (001) -- (010) node [midway] {$<$}
node [midway] [below] {$e^1_1$} ;
\draw[thick] (011) -- (020) node [midway] {$<$}
node [midway] [below] {$e^1_2$} ;
\draw[thick] (021) -- (000) node [midway] {$<$}
node [midway] [below] {$e^1_0$} ;
\node at (001)  [left] {$e^0_{01}$}; 
\node at (000) [right] {$e^0_{00}$}; 
\node at (011)  [left] {$e^0_{11}$}; 
\node at (010) [right] {$e^0_{10}$}; 
\node at (021)  [left] {$e^0_{21}$}; 
\node at (020) [right] {$e^0_{20}$}; 
\draw[thick, rounded corners=30pt] (001) -- (A) -- (B) node [midway] {$<$} node [midway] [below] {$e^1_{00}$} -- (000);
\draw[thick, rounded corners=30pt] (001) -- (C) -- (D) node [midway] {$>$} node [midway] [above] {$e^1_{01}$} -- (000);
\draw[thick, rounded corners=20pt] (010) -- (p) -- (q) node [midway]  {$>$} node [midway] [above] {$e^1_{10}$} -- (011);
\draw[thick, rounded corners=20pt] (010) -- (r) -- (s) node [midway]  {$<$} node [midway] [below] {$e^1_{11}$} -- (011);
\draw[thick, rounded corners=20pt] (020) -- (x) -- (y) node [midway] {$>$} node [midway] [above] {$e^1_{20}$} -- (021);
\draw[thick, rounded corners=20pt] (020) -- (z) -- (w) node [midway] {$<$} node [midway] [below] {$e^1_{21}$}  -- (021);
\node () at (7,4.5) {$e^2_+$};
\node () at (7,1.5) {$e^2_-$};
\node () at (4,3) {$\partial_1P$};
\node () at (10,3) {$\partial_{{2}}P$};
\node () at (15,2) {$\partial_0P$};
\end{tikzpicture}\par\medskip\noindent
\end{center}
%\label{fig:CP0}
\caption{Cell decomposition of $P = \Sigma_{0,3}$}
\end{figure}
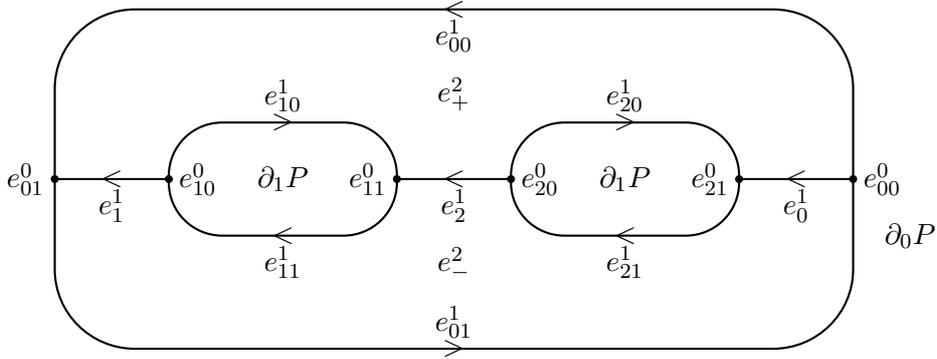
Here we have $6$ vertices(= $0$-cells), $9$ edges(= $1$-cells) and $2$ faces(= $2$ cells) in this decommposition. Later we call each of these $2$-cells $e^2_+$, $e^2_-$ a {\it hexagon}. 
The orientation of each $\pa_kP$ is induced by that of the surface $P$. 
We have $2$ vertices $e^0_{k0}$, $e^0_{k1}$ on each $\pa_kP$. 
The edge $e^1_{k0}$ is the subarc of $\pa_kP$ running from $e^0_{k0}$ to 
$e^0_{k1}$ along the orientation of $\pa_kP$, and $e^1_{k1}$ the subarc from $e^0_{k1}$ to $e^0_{k0}$. The edge $e^1_k$ is an arc from $e^0_{k,0}$ to $e^0_{k-1,1}$. When we endow $P$ with a hyperbolic structure with geodesic boundary, we can take each edge $e^1_k$ as the {\it seam}, i.e., the shortest geodesic connecting the geodesics $\pa_{k-1}P$ and $\pa_kP$. \par
The cell decomposition has a $\bZ/3$-symmetry given by changing the labeling of the boundary components cyclically. In fact, any of such a cyclic permutation can be realized by an orientation-preserving diffeomorphism of the surface $P$, which preserves also the cell decomposition. Let $P^{(0)}$ be the $0$-skeleton of the decomposition. 
In this section, we will  construct explicitly a $\bZ/3$-equivariant groupoid cocycle on the groupoid $\Pi P\op\vert_{P^{(0)}}$, the dual category of the fundamental groupoid of $P$ whose object set is restricted to the subset $P^{(0)}$, associated with a hyperbolic structure on $P$ making each of $\pa_k P$ a geodesic. 
The edges of $P$ generate the groupoid $\Pi P\op\vert_{P^{(0)}}$, while the $2$-cells $e^2_{\pm}$ provide its defining relations. In particular, any $1$-cocycle on $\Pi P\op\vert_{P^{(0)}}$ is determined uniquely by its values on the edges, and 
vanishes on the relations coming from the faces. 
\par

We begin by recalling some elementary facts on $\PSL$. Throughout this paper, we denote 
$$
\bD(h) := \pm \twomatrix{h}{0}{0}{h\inv} \in \PSL.
$$
for a positive real number $h > 0$. 
\begin{lem}\label{lem:ABBA}
For $\la > 1$ and $A, B \in \PSL$, we have the following.\par
{\rm (1)} If $A\bD(\la) = \bD(\la)A$, then we have $A = \bD(a)$ for some $a > 0$. \par
{\rm (2)} If $AB = BA = \bD(\la)$, then we have $A = \bD(a)$ and 
$B = \bD(\la/a)$ for some $a > 0$. \par
{\rm (3)} If $A \bD(\la)B\inv = B\bD(\la)A\inv = \bD(\la)$, then we have $A = B = \bD(a)$ for some $a > 0$. 
\end{lem}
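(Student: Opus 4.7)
The plan is to treat part (1) as the core computational fact and then bootstrap (2) and (3) from it.

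For part (1), I would lift the equation to $SL_2(\mathbb{R})$, writing $A = \pm\twomatrix{p}{q}{r}{s}$ with $ps-qr=1$ and comparing $A\bD(\la)$ with $\bD(\la)A$ entry by entry. In $\PSL$ the equation means equality up to overall sign, so I would split into two cases. In the ``$+$'' case the off-diagonal equations read $q(\la\inv-\la)=0$ and $r(\la-\la\inv)=0$; since $\la>1$ these force $q=r=0$, leaving a diagonal matrix $\pm\twomatrix{p}{0}{0}{1/p}$, which equals $\bD(\vert p\vert)$ in $\PSL$. In the ``$-$'' case the diagonal equations give $p=s=0$ and the off-diagonal equations force $q(\la+\la\inv)=0$, hence $q=0$, contradicting $ps-qr=1$. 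So only the ``$+$'' case occurs and $A=\bD(a)$ with $a>0$.

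For part (2), the hypothesis $AB=BA$ means $A$ commutes with $BA=\bD(\la)$, so (1) gives $A=\bD(a)$ for some $a>0$. Then $AB=\bD(\la)$ forces $B=\bD(a)\inv\bD(\la)=\bD(\la/a)$.

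For part (3), I would rewrite the two equations as $B=\bD(\la)\inv A\bD(\la)$ and $A=\bD(\la)\inv B\bD(\la)$. Substituting the first into the second gives
\[
A=\bD(\la)^{-2}\,A\,\bD(\la)^{2}=\bD(\la^{2})\inv A\,\bD(\la^{2}),
\]
so $A$ commutes with $\bD(\la^{2})$. Since $\la>1$ implies $\la^{2}>1$, part (1) (applied with $\la^{2}$ in place of $\la$) yields $A=\bD(a)$ for some $a>0$. Because diagonal elements commute, $B=\bD(\la)\inv\bD(a)\bD(\la)=\bD(a)=A$, as desired.

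The only mildly delicate step is keeping track of the $\pm$ identification when working in $\PSL$ at the start of (1); once that sign ambiguity is resolved by the assumption $\la>1$ (which separates $\la$ from $\la\inv$ and rules out the ``$-$'' case), the remaining arguments are formal manipulations and reduce (2) and (3) to (1).
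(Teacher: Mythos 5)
Your proposal is correct and follows essentially the same route as the paper: part (1) by entrywise comparison in a lift to $\SL$ (the paper absorbs the $\pm$ ambiguity via absolute values, you by explicit case analysis, which amounts to the same thing), part (2) by observing that $A$ commutes with $BA=\bD(\la)$, and part (3) by showing $A$ commutes with $\bD(\la^2)$ and invoking (1). No gaps.
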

\begin{proof} (1) Denote $A = \pm\twomatrix{a}{b}{c}{d}$. Then 
$\twomatrix{\la a}{\la\inv b}{\la c}{\la\inv d} = \pm \twomatrix{a}{b}{c}{d}
\twomatrix{\la}{0}{0}{\la\inv}
= A\bD(\la) = \bD(\la)A= \pm 
\twomatrix{\la}{0}{0}{\la\inv}\twomatrix{a}{b}{c}{d}
= \pm\twomatrix{\la a}{\la b}{\la\inv c}{\la\inv d}$, 
so that $\la^2|b| = |b|$ and $\la^2|c| = |c|$. 
Since $\la^2 \neq 1$, we have $b = c = 0$. 
From $\det A = 1$, we have $d = a\inv$. We may choose $a > 0$ 
by changing the sign of $a$ if necessary. \par
(2) Since $AB = BA$, we have $A(BA) = (AB)A = (BA)A$, while 
$BA = \bD(\la)$. Hence, by (1), we have $A = \bD(a)$ for some $a > 0$. 
We conclude $B = \bD(\la/a)$ from $BA = \bD(\la)$. \par
(3) We have $A\bD(\la^2)A\inv 
= A\bD(\la)B\inv B\bD(\la)A\inv
= \bD(\la^2)$ with $\la^2 > 1$. Hence, by (1), 
we have $A = \bD(a)$ for some $a > 0$. 
Hence we obtain $B = \bD(\la)\bD(a)\bD(\la\inv) = \bD(a)$. 
\end{proof}

\par
Now we fix a hyperbolic structure on $P$ which makes each of $\pa_k P$ a geodesic. Such a hyperbolic structure is uniquely determined by the length of each $\pa_kP$, $k \in \bZ/3$, which we denote by $l_k$. 
Its holonomy is given by a groupoid cocycle $\rho: \Pi P\op\vert_{P^{(0)}} \to \PSL$, which is unique up to conjugate by a $0$-cochain, namely a function $b: P^{(0)} \to \PSL$. We denote by $\rho^{b}$ 
the conjugate of $\rho$ by a function $b: P^{(0)} \to \PSL$. For any 
edge $e$ running from a vertex $v_0$ to $v_1$, we have 
\begin{equation}\label{eq:brho}
(\rho^{b})(e) := b(v_0)\inv\rho(e)b(v_1).
\end{equation}
We will prove there is a unique cocycle $\rho$ which represents the hyperbolic structure and satisfies certain conditions
(Proposition \ref{prop:std1}). We will call it the {\it standard cocycle} associated with the hyperbolic structure. 
In our proof, we modify cocycles gradually, but we denote all of them by the same symbol $\rho$. \par
Let $\la_k( >1)$ and ${\la_k}\inv$ be (the absolute values of) the eigenvalues of the holonomy along the boundary component $\pa_kP$, which satisfies 
$\la_k = \exp(l_k/2)$ or equivalently $l_k = 2\log\la_k$. 
\begin{lem}\label{lem:diagonal} By changing the cocycle $\rho$ by some conjugate, we have 
\begin{equation}\label{eq:norm1}
\forall k \in \bZ/3, \quad
\rho(e^1_{k0}) = \rho(e^1_{k1}) = \bD({\la_k}^{1/2}).
\end{equation}
Moreover, the conjugate $\rho^{b}
$ by a function $b: P^{(0)}\to \PSL$ 
satisfies the condition \eqref{eq:norm1}, if and only if we have 
$b(e^0_{k0}) = b(e^0_{k1})= \bD(t_k)$ for some $t_k > 0$. 
\end{lem}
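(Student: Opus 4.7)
For each $k \in \bZ/3$, set $A_k := \rho(e^1_{k0})$ and $B_k := \rho(e^1_{k1})$. The products $A_k B_k$ and $B_k A_k$ are the holonomies of the boundary loop $\pa_k P$ based at $e^0_{k0}$ and $e^0_{k1}$ respectively, and both are hyperbolic with eigenvalues $\la_k$ and $\la_k\inv$ since $\pa_k P$ is a closed geodesic of length $l_k = 2\log\la_k$. Since the six vertices $\{e^0_{kj}\}$ are distinct and each instance of \eqref{eq:norm1} constrains only the pair $(b(e^0_{k0}), b(e^0_{k1}))$, I argue separately for each $k$.

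For existence, I first pick $u_0 := b(e^0_{k0}) \in \PSL$ so that $u_0\inv (A_k B_k) u_0 = \bD(\la_k)$; this is possible because $A_k B_k$ is hyperbolic with trace $\la_k + \la_k\inv$. Then I define $u_1 := b(e^0_{k1}) := A_k\inv u_0 \bD(\la_k^{1/2})$, which immediately gives
$$(\rho^b)(e^1_{k0}) = u_0\inv A_k u_1 = u_0\inv A_k \cdot A_k\inv u_0 \bD(\la_k^{1/2}) = \bD(\la_k^{1/2}).$$
A short calculation then yields
$$(\rho^b)(e^1_{k1}) = u_1\inv B_k u_0 = \bD(\la_k^{-1/2})\, u_0\inv (A_k B_k) u_0 = \bD(\la_k^{-1/2}) \bD(\la_k) = \bD(\la_k^{1/2}),$$
as required.

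For the characterization of $b$, the ``if'' direction is immediate since diagonal elements commute: $b(e^0_{k0}) = b(e^0_{k1}) = \bD(t_k)$ yields $(\rho^b)(e^1_{kj}) = \bD(t_k)\inv \bD(\la_k^{1/2}) \bD(t_k) = \bD(\la_k^{1/2})$. For ``only if'', suppose both $\rho$ and $\rho^b$ satisfy \eqref{eq:norm1} and let $u_j := b(e^0_{kj})$. The two normalizations translate, via \eqref{eq:brho}, into
$$\bD(\la_k^{1/2})\, u_1 = u_0\, \bD(\la_k^{1/2}), \qquad \bD(\la_k^{1/2})\, u_0 = u_1\, \bD(\la_k^{1/2}).$$
Substituting the first into the second produces $\bD(\la_k)\, u_0 = u_0\, \bD(\la_k)$, and Lemma \ref{lem:ABBA}(1) (applicable since $\la_k > 1$) forces $u_0 = \bD(t_k)$ for some $t_k > 0$. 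Feeding this back into the first relation yields $u_1 = \bD(t_k)$ as well. The main obstacle is purely bookkeeping: all the $\PSL$ algebra is already packaged into Lemma \ref{lem:ABBA}(1), and the only care needed is to decouple the constraints across the three distinct boundary components.
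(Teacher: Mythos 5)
Your proof is correct, and it reaches the normalization by a slightly different route than the paper. For existence, the paper performs a two-stage gauge change: it first conjugates by $S_k$ at $e^0_{k0}$ and $T_k$ at $e^0_{k1}$ so that \emph{both} cyclic products $\rho(e^1_{k0})\rho(e^1_{k1})$ and $\rho(e^1_{k1})\rho(e^1_{k0})$ become $\bD(\la_k)$, invokes Lemma \ref{lem:ABBA}(2) to conclude that the two edge values are diagonal, and then rescales by a diagonal $0$-cochain to equalize them to $\bD({\la_k}^{1/2})$; you instead diagonalize only the single product $A_kB_k$ at $e^0_{k0}$ and then \emph{define} $u_1 := A_k\inv u_0\bD({\la_k}^{1/2})$, which produces both normalizations in one step and bypasses Lemma \ref{lem:ABBA}(2) entirely. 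For the ``only if'' direction, your substitution yielding $\bD(\la_k)u_0 = u_0\bD(\la_k)$ followed by Lemma \ref{lem:ABBA}(1) is exactly the computation by which the paper proves its Lemma \ref{lem:ABBA}(3), which it then cites here; you have simply inlined it (and you also supply the trivial ``if'' direction explicitly, which the paper leaves unstated). Both arguments rest on the same input — the boundary holonomy of $\pa_kP$ is conjugate in $\PSL$ to $\bD(\la_k)$, and the constraints decouple over $k\in\bZ/3$ because the vertex pairs are disjoint — so the difference is one of economy: your construction is more direct, while the paper's factoring through Lemma \ref{lem:ABBA}(2),(3) pays off later (e.g.\ part (3) is reused in \S2 for the edges crossing the curves $\delta_i$).
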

\begin{proof} We have $\rho(e^1_{k0})\rho(e^1_{k1}) = {S_k} \bD(\la_k){S_k}\inv$ and 
$\rho(e^1_{k1})\rho(e^1_{k0}) = {T_k}\bD(\la_k){T_k}\inv $ 
for some $S_k, T_k \in \PSL$. If we take its conjugate by the function $b: P^{(0)} \to \PSL$ defined by $b(e^0_{k0}) = S_k$ and $b(e^0_{k1}) = T_k$, we have 
$$
\rho(e^1_{k1})\rho(e^1_{k0}) = \rho(e^1_{k0})\rho(e^1_{k1}) 
= \bD(\la_k).
$$
Hence, by Lemma \ref{lem:ABBA} (2), we have 
$$
\rho(e^1_{k0}) =\bD(\mu_k), \quad
\rho(e^1_{k1}) = \bD(\la_k/\mu_k)
$$
for some $\mu_k > 0$. If we take its conjugate by the function $b: P^{(0)}\to \PSL$ defined by $b(e^0_{k0}) = \bD(1) = \pm E$ and $b(e^0_{k1}) = \bD({\la_k}^{1/2}/\mu_k)$, we have 
$$
\rho(e^1_{k0}) = \rho(e^1_{k1}) = \bD({\la_k}^{1/2}),
$$
which proves the first half of the lemma.
The second half follows from Lemma \ref{lem:ABBA} (3). 
\end{proof}

In order to re-define the Fenchel-Nielsen coordinates 
by using a groupoid cocycle satisfying the condition \eqref{eq:norm1} in \S\ref{sec:FN},
%normalize such a groupoid cocycle, 
we recall another elementary fact on the hyperbolic plane. 
\begin{lem}\label{lem:axis1} Let $\la >1$ and 
$A = \pm \twomatrix{a}{b}{c}{d}\in \PSL$.
Then the attracting fixed point of the hyperbolic transformation 
$A\bD(\la)A\inv$ is $a/c$, and the repelling fixed point $b/d$. 
Moreover, if the axis of $A\bD(\la)A\inv$ does not intersect with the imaginary axis, then the point on the imaginary axis nearest to that of $A\bD(\la)A\inv$
is $\sqrt{ab/cd}\sqrt{-1}$. 
\end{lem}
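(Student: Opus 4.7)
The plan is to treat the two assertions separately, using the standard Möbius action of $\PSL$ on the upper half plane $\bH$.

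For the fixed points, I would observe that $\bD(\la)$ acts on $\bH$ as the map $z \mapsto \la^2 z$. Since $\la^2 > 1$, its attracting fixed point on $\pa\bH = \bR\cup\{\infty\}$ is $\infty$ and its repelling fixed point is $0$. Conjugation by the Möbius transformation $A$ carries fixed points to fixed points and preserves their attracting/repelling character, so the attracting fixed point of $A\bD(\la)A\inv$ is $A(\infty) = a/c$ and the repelling one is $A(0) = b/d$.

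For the nearest point statement, I would invoke the classical fact that two disjoint geodesics in $\bH$ admit a unique common perpendicular whose two feet are precisely the pair of mutually closest points on each. The axis of $A\bD(\la)A\inv$ is the geodesic joining $b/d$ to $a/c$, which (assuming both endpoints are finite) is the Euclidean semicircle with center $\frac{1}{2}(a/c + b/d)$ and radius $\frac{1}{2}|a/c - b/d|$. The hypothesis that this axis does not meet the imaginary axis forces $a/c$ and $b/d$ to have the same sign, so in particular $ab/cd > 0$ and the square root in the claim is real.

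The geodesic of $\bH$ perpendicular to the imaginary axis at the point $y_0\sqrt{-1}$ is the Euclidean semicircle of radius $y_0$ centered at the origin. Demanding that this semicircle also meet the axis of $A\bD(\la)A\inv$ orthogonally is just the elementary orthogonality condition for two Euclidean circles, namely
\[
y_0^2 + \left(\tfrac{a/c - b/d}{2}\right)^2 = \left(\tfrac{a/c + b/d}{2}\right)^2,
\]
which simplifies at once to $y_0^2 = (a/c)(b/d) = ab/cd$. Hence the foot of the common perpendicular on the imaginary axis is $\sqrt{ab/cd}\,\sqrt{-1}$. The argument is entirely elementary; there is no substantial obstacle, beyond the brief sign check afforded by the disjointness hypothesis and a routine reduction of the degenerate case where one of $c,d$ vanishes (so that an endpoint lies at $\infty$) to a limiting version of the same orthogonality computation.
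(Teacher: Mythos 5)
Your argument is correct and is essentially the paper's own: the first half amounts to the observation that the columns of $A$ are the eigenvectors of $A\bD(\la)A\inv$ (equivalently, that conjugation by $A$ transports the dynamics of $\bD(\la)$), and the second half is exactly the paper's orthogonality computation $R^2+\left(\tfrac{p-q}{2}\right)^2=\left(\tfrac{p+q}{2}\right)^2$ for the two Euclidean circles, with $p=a/c$, $q=b/d$. The only slip is the parenthetical about $c$ or $d$ vanishing: there the axis is a vertical line asymptotic to the imaginary axis, so no nearest point exists and there is no limiting statement to prove --- but that case is implicitly excluded by the formula $\sqrt{ab/cd}$ and never arises in the paper.
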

\begin{proof}
The vectors $\twovector{a}{c}$ and 
$\twovector{b}{d}$ are eigenvectors of $A\bD(\la)A\inv$
%$$
%A\inv \bD(\la)A = \pm\twomatrix{d}{-b}{-c}{a}\twomatrix{\la}{0}{0}{\la\inv}\twomatrix{a}{b}{c}{d}
%$$ 
for the eigenvalues $\pm\la$ and $\pm\la\inv$, respectively. This implies the first half of the lemma. 
\par
To prove the second half, it suffices to compute the shortest geodesic 
between the imaginary axis and the circular arcs perpendicular to the real axis
at the points $q < p < 0$. The latter is a segment of the Euclidean circle $C_1$ centered at $(p+q)/2$ with radius $(p-q)/2$. 
Let $R\sqrt{-1}$, $R > 0$, be the intersection point 
of the shortest geodesic and the imaginary axis.
Then the shortest geodesic is a segment of the Euclidean circle $C_2$ centered at the origin $0$ with radius $R$.
Then the circles $C_1$ and $C_2$ are perpendicular to each other.
Hence we have $((p-q)/2)^2+R^2 = ((p+q)/2)^2$, so that $R = \sqrt{pq}$. 
The same result holds also for the case $0 < p < q$, 
since $R = \sqrt{pq}$ is symmetric in $p$ and $q$.
The second half follows from this computation and the first half.
\end{proof}

\par
Now let $\rho$ be a groupoid cocycle which is a holonomy of the fixed hyperbolic structure and satisfies the condition \eqref{eq:norm1}.
For the rest of this section, we denote 
$$
\rho(e^1_k) = A_k = \pm \twomatrix{a_k}{b_k}{c_k}{d_k}, \quad
a_kd_k-b_kc_k = 1. 
$$
Here we recall some classical consideration on the axes of $\rho(\gamma_k)$'s,
which was already discussed by Keen %\cite{Keen1} and 
\cite[pp.415-417]{Keen2}. 
By gluing a half $\{z\in \bC; \,\, \Im z> 0, \, \Re z\geq 0\}/\langle \bD(\la_k)\rangle$ of a hyperbolic annulus along each geodesic boundary $\pa_kP$, 
we obtain a complete hyperbolic surface $\hat P$, whose universal covering is the upper half plane $\bH$. We take a universal covering map $\varpi: \bH \to \hat P$ such that the deck transformation of $\pi_1(P, e^0_{00}) = \pi_1(\hat P, e^0_{00})$ is given by $\rho\vert_{\pi_1(P, e^0_{00})}$. 
For each $k \in \bZ/3$, we denote the attracting fixed point and 
the repelling fixed point of $\rho(\gamma_k)$ 
by $\infty_k$ and $0_k$, respectively. 
For example, $0_0 = 0$, $\infty_0 = \infty$, $0_2 = b_0/d_0$ and 
$\infty_2 = a_0/c_0$. 

\begin{lem} For any $k \in \bZ/3$, we have 
\begin{equation}\label{eq:ab_cd}
b_k/d_k < a_k/c_k < 0. 
\end{equation}
Moreover we have
\begin{equation}\label{eq:ad_bc}
a_kb_k/c_kd_k > 0, \quad
b_kc_k < 0, \quad 
{a_kb_k > 0}
%a_kc_k < 0
\quad\text{and}\quad
c_kd_k > 0.
\end{equation}
\end{lem}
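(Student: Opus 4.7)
The plan is to derive the first inequality $b_k/d_k<a_k/c_k<0$ from the geometry of the developed picture in $\bH$, and then to deduce the three sign conditions in the second display as purely algebraic consequences of it together with $\det A_k=1$.

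For the geometric part, I would fix $k$ and use the normalization \eqref{eq:norm1}, which makes $\bD(\la_k)$ the holonomy of $\pa_k P$ based at $e^0_{k0}$. The axis of $\bD(\la_k)$ is then the imaginary axis, and its translation direction---from the repelling point $0$ to the attracting point $\infty$---coincides with the induced boundary orientation of $\pa_k P$. The convention that $P$ sits on the left of $\pa_k P$ places the chosen lift $\tilde P$ of the pair of pants in the left half-plane $\{\Re z<0\}$. The loop based at $e^0_{k0}$ obtained by running along the seam $e^1_k$ to $e^0_{k-1,1}$, going once around $\pa_{k-1}P$, and returning has holonomy $A_k\bD(\la_{k-1})A_k\inv$; by Lemma~\ref{lem:axis1} its axis has attracting fixed point $a_k/c_k$ and repelling fixed point $b_k/d_k$. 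Since this axis is a lift of $\pa_{k-1}P$ adjacent to $\tilde P$ across the seam, it is disjoint from the imaginary axis and contained in the closure of $\{\Re z<0\}$, so both fixed points are negative.

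To fix their order, I would use the same boundary convention: the axis carries $\tilde P$ on its left, and $\tilde P$ lies in the exterior of the Euclidean semicircle forming the axis (the side containing the imaginary axis and $\infty$). A quick tangent check at the top of the semicircle then forces the ``repelling-to-attracting'' direction to run from the more negative to the less negative endpoint, yielding $b_k/d_k<a_k/c_k<0$. The three remaining claims then follow by elementary algebra: setting $p=b_k/d_k<q=a_k/c_k<0$, the relations $a_k=qc_k$, $b_k=pd_k$, and $a_kd_k-b_kc_k=1$ give $(q-p)c_kd_k=1$, hence $c_kd_k=1/(q-p)>0$; then $b_kc_k=p\cdot c_kd_k=p/(q-p)<0$, and $a_kb_k/c_kd_k=(a_k/c_k)(b_k/d_k)=pq>0$ as a product of two negatives.

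The main obstacle is the orientation bookkeeping in the geometric part: one must consistently reconcile the ``op''-concatenation convention, the boundary orientation of $\pa_{k-1}P$ induced from $P$, the translation direction of $\bD(\la_{k-1})$ along its axis, and the side of that axis containing $\tilde P$. Once these are aligned, the inequality on the endpoints is immediate from a single tangent-vector inspection, and the three sign statements drop out of the determinant computation.
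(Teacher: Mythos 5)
Your proposal is correct and follows essentially the same route as the paper: the inequality $b_k/d_k<a_k/c_k<0$ is obtained by locating the axis of $A_k\bD(\la_{k-1})A_k\inv$ (a lift of $\pa_{k-1}P$) in the left half-plane and reading off attracting versus repelling endpoints from the orientation, exactly as the paper does for $k=0$ before invoking the $\bZ/3$-symmetry. Your algebraic deduction of the sign conditions via $(q-p)c_kd_k=1$ is a slightly more direct computation than the paper's contradiction argument using $|a_kd_k|<|b_kc_k|$ and $a_kd_k=1+b_kc_k$, but it is an equivalent elementary step.
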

\begin{proof} The axis of $\rho(\gamma_2)$ does not intersect with 
that of $\gamma(\gamma_0)$, the imaginary axis. Considering the orientation of these axes, we conclude the axis of $\rho(\gamma_2)$ is located 
on the left-hand side of the imaginary axis. This means $0_2 < \infty_2 < 0$, i.e., $b_0/d_0 < a_0/c_0 < 0$. By the $\bZ/3$-symmetry, 
%i.e., changing the labeling of the boundary components cyclically, 
we obtain \eqref{eq:ab_cd}
for any $k \in \bZ/3$.\par
The inequality \eqref{eq:ab_cd} implies 
$a_kb_k/c_kd_k > 0$ and $|a_kd_k| < |b_kc_k|$.
Assume $b_kc_k > 0$. Then $|a_kd_k| < |b_kc_k| = b_kc_k = a_kd_k - 1
\leq |a_kd_k|-1$, which is a contradiction. Hence we have $b_kc_k < 0$. 
{By} \eqref{eq:ab_cd}, 
we obtain {$a_kb_k > 0$ and} $c_kd_k>0$. This completes the proof. 
\end{proof}

Let $\gamma_0$, $\gamma_1$ and $\gamma_2$ be elements of the fundamental group $\pi_1(P, e^0_{00})$
defined by 
$$
\gamma_0 := e^1_{00}e^1_{01}, \quad
\gamma_1 := (e^1_0(e^1_{20})\inv e^1_2) 
e^1_{11}e^1_{10}
(e^1_0(e^1_{20})\inv e^1_2)\inv \quad\text{and}\quad
\gamma_2 := e^1_0 e^1_{21}e^1_{20}(e^1_0)\inv,
$$
%$$
%\aligned
%& \gamma_0 := e^1_{00}e^1_{01}, \\
%& \gamma_1 := (e^1_{00}(e^1_1)\inv) 
%e^1_{10}e^1_{11}
%(e^1_{00}(e^1_1)\inv)\inv\quad\text{and}\\
%& \gamma_2 := e^1_0 e^1_{21}e^1_{20}(e^1_0)\inv,
%\endaligned
%$$
respectively. Then the relation
\begin{equation}\label{eq:pi_rel}
\gamma_2\gamma_1\gamma_0 = 1 \in \pi_1(P, e^0_{00})
\end{equation}
holds. 
The group $\pi_1(P, e^0_{00})$ is freely generated by 
$\{\gamma_0, \gamma_2\}$. We have 
$\rho(\gamma_0) = \bD(\la_0)$ and 
$\rho(\gamma_2) = {A_0} \bD(\la_2){A_0}\inv$. \par

Since ${\gamma_1}\inv = \gamma_0\gamma_2$ from \eqref{eq:pi_rel}, 
we have 
$$
\aligned
& \rho(\gamma_1)\inv = \rho(\gamma_0)\rho(\gamma_2) 
= \bD(\la_0){A_0}\bD(\la_2){A_0}\inv \\
&= \pm\twomatrix{\la_0}{0}{0}{{\la_0}\inv}
\twomatrix{a_0}{b_0}{c_0}{d_0}
\twomatrix{\la_2}{0}{0}{{\la_2}\inv}
\twomatrix{d_0}{-b_0}{-c_0}{a_0}
\\
&= \pm \twomatrix{\la_0a_0}{\la_0b_0}{{\la_0}\inv c_0}{{\la_0}\inv d_0}\twomatrix{\la_2d_0}{-\la_2b_0}{-{\la_2}\inv c_0}{{\la_2}\inv a_0}\\
&= \pm\twomatrix{{\la_0}{\la_2}a_0d_0 - {\la_0}{\la_2}\inv b_0c_0}
{{\la_0}{\la_2}\inv a_0b_0 - {\la_0}{\la_2} a_0b_0}
{{\la_0}\inv{\la_2}c_0d_0 - {\la_0}\inv{\la_2}\inv c_0d_0}
{{\la_0}\inv{\la_2}\inv a_0d_0 - {\la_0}\inv {\la_2} b_0c_0}.
\endaligned
$$
We denote by $\twomatrix{p_1}{q_1}{r_1}{s_1}$ the right-hand side of the above equation deleting the sign $\pm$. For example, 
$r_1 = {\la_0}\inv({\la_2}- {\la_2}\inv) c_0d_0$. 
%$p_1 = {\la_0}{\la_2}a_0d_0 - {\la_0}{\la_2}\inv b_0c_0$. 
Then we have the following lemma proved by Keen \cite{Keen1}. 
In order to make this paper self-contained, we give a proof of the lemma. Moreover it will be proved again in \eqref{eq:spinP}, \S{5}.3. 
\begin{lem}[\cite{Keen1}, Lemma 1, p.210]\label{lem:Keen}
$$
p_1+s_1 < 0.
$$
\end{lem}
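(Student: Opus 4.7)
The plan is to turn Keen's inequality into an exact identity for $\trace\rho(\gamma_1)\inv$ via classical hyperbolic trigonometry. First I would rewrite $p_1 + s_1$ using $\la_k + \la_k\inv = 2\cosh(l_k/2)$ and $\la_k - \la_k\inv = 2\sinh(l_k/2)$, together with the factorisations $\la_0\la_2 + \la_0\inv\la_2\inv = 2\cosh(l_0/2)\cosh(l_2/2) + 2\sinh(l_0/2)\sinh(l_2/2)$ and $\la_0\la_2\inv + \la_0\inv\la_2 = 2\cosh(l_0/2)\cosh(l_2/2) - 2\sinh(l_0/2)\sinh(l_2/2)$. Substituting into the explicit formula for $p_1 + s_1$ and invoking the determinant relation $a_0 d_0 - b_0 c_0 = 1$, the cross terms collapse to
\[
p_1 + s_1 = 2\cosh(l_0/2)\cosh(l_2/2) + 2\sinh(l_0/2)\sinh(l_2/2)\,(a_0 d_0 + b_0 c_0),
\]
which reduces the lemma to identifying the single real quantity $a_0 d_0 + b_0 c_0$.

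Second, I would give $a_0 d_0 + b_0 c_0$ a geometric meaning. Since Lemma \ref{lem:axis1} gives $\infty_2 = a_0/c_0$ and $0_2 = b_0/d_0$, the determinant relation reads $c_0 d_0(\infty_2 - 0_2) = 1$, whence
\[
a_0 d_0 + b_0 c_0 = \frac{\infty_2 + 0_2}{\infty_2 - 0_2}.
\]
A short computation on the upper half-plane (extending the second half of Lemma \ref{lem:axis1} from the foot of the common perpendicular to the full length along it) identifies the right-hand side with $-\cosh(d_{02})$, where $d_{02}$ is the hyperbolic distance between the imaginary axis (the axis of $\rho(\gamma_0)$) and the axis of $\rho(\gamma_2)$, i.e.\ the length of the seam between $\pa_0 P$ and $\pa_2 P$.

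Third, invoking the classical pair-of-pants cosine rule (the hyperbolic law of cosines for a right-angled hexagon)
\[
\cosh(d_{02})\sinh(l_0/2)\sinh(l_2/2) = \cosh(l_1/2) + \cosh(l_0/2)\cosh(l_2/2)
\]
and substituting back yields $p_1 + s_1 = -2\cosh(l_1/2) < 0$, which in fact pins down both the sign and the magnitude of $\trace\rho(\gamma_1)\inv$ and thereby proves Lemma \ref{lem:Keen}.

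The main obstacle is the geometric identification $a_0 d_0 + b_0 c_0 = -\cosh(d_{02})$: Lemma \ref{lem:axis1} gives only the foot of the common perpendicular, and converting this into an actual distance requires a separate (though elementary) computation of an integral along a hyperbolic geodesic. A purely algebraic shortcut would instead use only the signs from \eqref{eq:ab_cd}--\eqref{eq:ad_bc} to derive $a_0 d_0 + b_0 c_0 < -1$ and combine this with $|p_1+s_1|=2\cosh(l_1/2)$ to rule out the positive sign; this works immediately when $l_1 \geq |l_0 - l_2|$, but the opposite regime $l_1 < |l_0 - l_2|$ still seems to force one back to the quantitative pants identity above.
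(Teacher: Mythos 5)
Your proposal is correct, but it takes a genuinely different route from the paper's. The paper argues qualitatively: having shown $r_1>0$ from the sign relations \eqref{eq:ad_bc}, it orders the two fixed points of $\rho(\gamma_1)\inv$, uses the orientation of its axis to decide which fixed point is attracting, and concludes that the eigenvalue $(p_1+s_1-\sqrt{(p_1+s_1)^2-4})/2$ has absolute value greater than $1$, forcing $p_1+s_1<0$; no trigonometric identity is needed. You instead compute the trace exactly: the reduction $p_1+s_1=2\cosh(l_0/2)\cosh(l_2/2)+2\sinh(l_0/2)\sinh(l_2/2)(a_0d_0+b_0c_0)$ is right, the identity $a_0d_0+b_0c_0=(\infty_2+0_2)/(\infty_2-0_2)=-\cosh d_{02}$ is right (the distance formula $\cosh d=|p+q|/|p-q|$ between the imaginary axis and the geodesic over $q<p<0$ is the elementary computation you flag, and \eqref{eq:ab_cd} supplies the sign), and the right-angled-hexagon relation then gives $p_1+s_1=-2\cosh(l_1/2)$. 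This is stronger than the lemma itself and in effect reproves the paper's subsequent formula \eqref{eq:bkck} in one stroke, at the cost of importing classical hexagon trigonometry and two extra metric computations. The one step you should justify explicitly is the identification of $d_{02}$ with the seam length between $\pa_0P$ and $\pa_2P$: it holds because $\gamma_2=e^1_0e^1_{21}e^1_{20}(e^1_0)\inv$, so the axis of $\rho(\gamma_2)$ is the lift of $\pa_2P$ reached by lifting the geodesically realized seam $e^1_0$ from the imaginary axis, and that lift is perpendicular to both axes, hence is their common perpendicular. Without this identification the hexagon formula (whose side opposite the seam is $l_1/2$) cannot be invoked; and, as you correctly note, the sign-only fallback $a_0d_0+b_0c_0<-1$ genuinely fails to decide the sign when $l_1<|l_0-l_2|$, so the quantitative identification is indeed indispensable to your argument.
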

\begin{proof}
We have $r_1 > 0$, since $\la_0>0$, $({\la_2}- {\la_2}\inv)> 0$ and $c_0d_0 > 0$ from \eqref{eq:ad_bc}.
The fixed points of $\rho(\gamma_1)\inv$ are $(p_1-s_1 \pm\sqrt{(p_1+s_1)^2-4})/2r_1$. Since $r_1 > 0$, we have 
$$
(p_1-s_1 -\sqrt{(p_1+s_1)^2-4})/2r_1
< (p_1-s_1 +\sqrt{(p_1+s_1)^2-4})/2r_1
$$
which correspond to the eigenvalues 
$(p_1+s_1 -\sqrt{(p_1+s_1)^2-4})/2$ and 
$(p_1+s_1 +\sqrt{(p_1+s_1)^2-4})/2$, respectively.
The orientation of the axis of $\rho(\gamma_1)\inv$ is opposite to that of the upper sides of the axis. Hence $(p_1-s_1 -\sqrt{(p_1+s_1)^2-4})/2r_1$ is attracting, and so 
$|(p_1+s_1 -\sqrt{(p_1+s_1)^2-4})/2| > 1$. Hence we have $p_1+s_1 < 0$, as was to be shown.
\end{proof}
By this lemma we have 
$$
\aligned
-\la_1 - {\la_1}\inv &= p_1+ s_1\\
&= 
({\la_0}{\la_2}+ {\la_0}\inv{\la_2}\inv)a_0d_0 
 - ( {\la_0}{\la_2}\inv+{\la_0}\inv {\la_2}) b_0c_0\\
 &= ({\la_0}{\la_2}+ {\la_0}\inv{\la_2}\inv)
 + (\la_0-{\la_0}\inv)(\la_2-{\la_2}\inv)b_0c_0,
\endaligned
$$
where we used $a_0d_0 = 1+b_0c_0$. 
From \eqref{eq:ad_bc} and $\la_k = \exp(l_k/2)$, we have 
$$
%\begin{equation}\label{eq:b_0c_0}
|b_0c_0| = -b_0c_0 = \frac{\cosh(l_1/2) + \cosh((l_2+l_0)/2)}{2\sinh(l_2/2)\sinh(l_0/2)}.
%\end{equation}
$$
By the $\bZ/3$-symmetry and the well-known formulae 
$$
\aligned
& \cosh t + \cosh s = 2\cosh(\frac{t+s}2)\cosh(\frac{t-s}2),\\
& 2\sinh t \sinh s = \cosh (t+s) - \cosh(t-s),\\
\endaligned
$$
we obtain 
\begin{equation}\label{eq:bkck}
\aligned
|b_kc_k| &= \frac{\cosh(l_{k+1}/2) + \cosh((l_{k-1}+l_k)/2)}{2\sinh(l_{k-1}/2)\sinh(l_k/2)}\\
&= 
\frac{\cosh((l_0+l_1+l_2)/4)\cosh((l_0+l_1+l_2)/4 - l_{k+1}/2)}{\sinh(l_{k-1}/2)\sinh(l_k/2)}
\endaligned
\end{equation}
and
\begin{equation}\label{eq:bkck-1}
\aligned
&- a_kd_k = |b_kc_k|-1 \\
&= \frac{\cosh(l_{k+1}/2) + \cosh((l_{k-1}+l_k)/2)
- \cosh((l_{k-1}+l_k)/2) + \cosh((l_{k-1}-l_k)/2)}
{2\sinh(l_{k-1}/2)\sinh(l_k/2)}\\
&= 
\frac{\cosh((l_0+l_1+l_2)/4 - l_{k-1}/2)\cosh((l_0+l_1+l_2)/4 - l_{k}/2)}{\sinh(l_{k-1}/2)\sinh(l_k/2)} \quad(> 0)
\endaligned
\end{equation}
for any $k \in \bZ/3$.

In general, we call a matrix $A = \pm\twomatrix{a}{b}{c}{d} \in \PSL$ {\it normalized} if $ab/cd = 1$. 

\begin{lem}\label{lem:norm0}
Let $A = \pm\twomatrix{a}{b}{c}{d} \in \PSL$ satisfy $ab/cd > 0$, and let
$t_0, t_1$ be positive real numbers. Then $\bD(t_0)\inv A\bD(t_1)$ is normalized if and only if $t_0 = (ab/cd)^{1/4}$. 
\end{lem}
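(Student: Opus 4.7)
The plan is a direct matrix computation, since ``normalized'' is a condition only on the four entries of the resulting matrix, and conjugation by diagonal matrices of the form $\bD(t)$ rescales these entries in a very simple way.

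First I would write out the product $\bD(t_0)\inv A \bD(t_1)$ entry-by-entry. Using $\bD(t_0)\inv = \pm\twomatrix{t_0\inv}{0}{0}{t_0}$ and $\bD(t_1) = \pm\twomatrix{t_1}{0}{0}{t_1\inv}$, the $(i,j)$-entry of $A$ gets multiplied by $t_0^{\pm 1} t_1^{\pm 1}$ in an obvious pattern, giving
\[
\bD(t_0)\inv A \bD(t_1) = \pm \twomatrix{t_0\inv t_1 a}{t_0\inv t_1\inv b}{t_0 t_1 c}{t_0 t_1\inv d}.
\]

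Next I would form the ratio (top-left $\times$ top-right) divided by (bottom-left $\times$ bottom-right), which is what the normalization condition requires to equal $1$. The factors of $t_1$ cancel exactly (one $t_1$ and one $t_1\inv$ in each row), while the $t_0$ factors combine to $t_0^{-2}$ on top and $t_0^{2}$ on the bottom. Thus the ratio is $t_0^{-4}(ab/cd)$, and setting this equal to $1$ gives $t_0^4 = ab/cd$. Because $ab/cd > 0$ by hypothesis and $t_0 > 0$, this is equivalent to $t_0 = (ab/cd)^{1/4}$.

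There is no real obstacle here; it is a one-line linear-algebra check. The only mildly notable feature is that the condition involves $t_0$ alone: the parameter $t_1$ drops out because it contributes reciprocal factors to the two entries of each row, leaving the row-products invariant. This matches what one expects from Lemma~\ref{lem:diagonal}, namely that post-composing with $\bD(t_1)$ on the target vertex preserves the normalization condition \eqref{eq:norm1} at the boundary.
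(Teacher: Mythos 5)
Your computation is correct and matches the paper's own proof essentially verbatim: both multiply out $\bD(t_0)\inv A\bD(t_1)$ entrywise, observe that the product of the row entries gives $a'b'/c'd' = t_0^{-4}\,ab/cd$, and conclude from $ab/cd>0$ and $t_0>0$ that normalization is equivalent to $t_0=(ab/cd)^{1/4}$. Nothing further is needed.
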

\begin{proof} If we denote $\pm\twomatrix{a'}{b'}{c'}{d'} = \bD(t_0)\inv A\bD(t_1)$,  then 
$$
\pm\twomatrix{a'}{b'}{c'}{d'}
= \pm\twomatrix{{t_0}\inv}{0}{0}{{t_0}}\twomatrix{a}{b}{c}{d}\twomatrix{t_1}{0}{0}{{t_1}\inv}
= \pm \twomatrix{{t_0}\inv{t_1}a}{{t_0}\inv{t_1}\inv b}{t_0t_1c}{t_0{t_1}\inv d}.
$$
and so $a'b'/c'd' = {t_0}^{-4}ab/cd$. This proves the lemma. 
\end{proof}

Now we remark $a_bb_k/c_kd_k > 0$ by \eqref{eq:ad_bc}.
\begin{prop}\label{prop:std1}
There is a unique groupoid cocycle $\rho: \Pi P\op\vert_{P^{(0)}} \to \PSL$ 
which is a holonomy of the hyperbolic structure and satisfies the conditions 
\eqref{eq:norm1} and 
\begin{equation}\label{eq:norm2}
\forall k \in \bZ/3, \quad
a_kb_k/c_kd_k = 1.
\end{equation}
We call it the {\it standard cocycle} of the hyperbolic structure on $P$.
\end{prop}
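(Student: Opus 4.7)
The plan is to start from a cocycle satisfying \eqref{eq:norm1}, available by Lemma \ref{lem:diagonal}, and then use the residual gauge freedom to force \eqref{eq:norm2}. By Lemma \ref{lem:diagonal}, any two holonomy cocycles representing the fixed hyperbolic structure and satisfying \eqref{eq:norm1} differ by conjugation by a $0$-cochain $b: P^{(0)} \to \PSL$ of the form $b(e^0_{k,0}) = b(e^0_{k,1}) = \bD(t_k)$ with $t_k > 0$ for each $k \in \bZ/3$. So the problem reduces to choosing the triple $(t_0, t_1, t_2) \in \bR_{>0}^3$ so that the three seam matrices $A_k = \rho(e^1_k)$ all become normalized after this conjugation.

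Next I would compute the action of the residual gauge on the seams. Since $e^1_k$ runs from $e^0_{k,0}$ to $e^0_{k-1,1}$, formula \eqref{eq:brho} gives
$$
A_k \;\longmapsto\; \bD(t_k)\inv A_k \bD(t_{k-1}).
$$
The condition $a_k b_k / c_k d_k > 0$ from \eqref{eq:ad_bc} makes Lemma \ref{lem:norm0} applicable, and the lemma asserts that the transformed matrix is normalized precisely when $t_k = (a_k b_k / c_k d_k)^{1/4}$. The key point is that this condition fixes $t_k$ and does not involve $t_{k-1}$, so the three normalization equations for $A_0, A_1, A_2$ decouple completely. Setting $t_k := (a_k b_k / c_k d_k)^{1/4}$ for each $k \in \bZ/3$ then produces a cocycle satisfying both \eqref{eq:norm1} and \eqref{eq:norm2}; since each $t_k$ is uniquely forced within the residual gauge, the resulting cocycle is unique.

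If I had to flag an obstacle, it would be precisely the decoupling captured by Lemma \ref{lem:norm0}: only the source-side factor $t_k$ appears in the normalization condition, while the target-side factor $t_{k-1}$ drops out. Had both factors contributed non-trivially, one would face a cyclic coupled system around the index cycle $0 \to 2 \to 1 \to 0$ and would need a separate solvability and consistency check. Here the system trivializes into three independent scalar equations, and nothing needs to be verified beyond the positivity of each $a_k b_k / c_k d_k$, which is already in hand from \eqref{eq:ad_bc}.
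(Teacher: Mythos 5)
Your proposal is correct and follows essentially the same route as the paper: reduce to the residual gauge $b(e^0_{k0})=b(e^0_{k1})=\bD(t_k)$ allowed by Lemma \ref{lem:diagonal}, compute $\rho^b(e^1_k)=\bD(t_k)\inv A_k\bD(t_{k-1})$ from \eqref{eq:brho}, and apply Lemma \ref{lem:norm0} (using $a_kb_k/c_kd_k>0$ from \eqref{eq:ad_bc}) to force $t_k=(a_kb_k/c_kd_k)^{1/4}$ uniquely. Your remark that the normalization conditions decouple because only the source-side factor enters is exactly the point the paper's proof relies on, just made explicit.
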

\begin{proof}
The condition \eqref{eq:norm2} means that $\rho(e^1_k)$ is normalized
for each $k \in \bZ/3$. 
From Lemma \ref{lem:diagonal}, a cocycle $\rho$ satisfying the condition \eqref{eq:norm1} is unique up to conjugate by a function $b: P^{(0)} \to \PSL$ such that $b(e^0_{k0}) = b(e^0_{k1})$ is a diagonal matrix for each $k \in \bZ/3$. So one can denote $b(e^0_{k0}) = b(e^0_{k1}) = \bD(s_k)$, $s_k > 0$. 
Then we have $\rho^{b}
(e^1_k) = \bD(s_k)\inv A_k\bD(s_{k-1})$.  Hence, by 
Lemma \ref{lem:norm0}, $\rho^{b}
(e^1_k)$ is normalized if and only if 
$s_k = (a_kb_k/c_kd_k)^{1/4}$. 
This proves the lemma.
\end{proof}

The hyperbolic structure is uniquely determined by the geodesic lengths $l_k$'s of the boundary components. Hence one can describe 
the standard cocycle $\rho$, or equivalently each $\rho(e^1_k) = A_k
= \pm\twomatrix{a_k}{b_k}{c_k}{d_k}$, 
$k \in \bZ/3$, in terms the lengths $l_0$, $l_1$ and $l_2$ explicitly. 
\begin{thm}\label{thm:A_k} We have
\begin{equation}\label{eq:A_kA_k}
\rho(e^1_k) = A_k =  \pm\twomatrix{\sqrt{|b_kc_k|-1}}{|b_kc_k|^{1/2}}{{-}|b_kc_k|^{1/2}}{-\sqrt{|b_kc_k|-1}}.
\end{equation}
Here $|b_kc_k| = -b_kc_k$ and $|b_kc_k|-1(> 0)$ are given by \eqref{eq:bkck}
and \eqref{eq:bkck-1} respectively. In particular, we obtain ${A_k}^2 = \pm I$. 
\end{thm}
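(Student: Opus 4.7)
My plan is to determine $A_k = \pm\twomatrix{a_k}{b_k}{c_k}{d_k}$ by assembling the constraints already established and using a remaining 2-cell relation to fix the last scalar parameter. First, \eqref{eq:bkck} and \eqref{eq:bkck-1} give the explicit values $b_kc_k = -|b_kc_k|$ and $a_kd_k = -(|b_kc_k|-1)$, while the normalization \eqref{eq:norm2} supplies $a_kb_k = c_kd_k$. Multiplying these gives $(a_kb_k)^2 = (a_kd_k)(b_kc_k) = |b_kc_k|(|b_kc_k|-1)$, so $a_kb_k$ and $c_kd_k$ are determined up to a common sign, fixed by the sign information in \eqref{eq:ad_bc}. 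With all four products $a_kd_k$, $b_kc_k$, $a_kb_k$, $c_kd_k$ known, $A_k$ is pinned down up to a single positive scalar $\alpha_k$: writing $a_k = \alpha_k$ one obtains $d_k = -(|b_kc_k|-1)/\alpha_k$, $b_k = \sqrt{|b_kc_k|(|b_kc_k|-1)}/\alpha_k$, and $c_k = -\alpha_k\sqrt{|b_kc_k|/(|b_kc_k|-1)}$ (with signs absorbed into the overall $\pm$).

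To fix $\alpha_k$, I would invoke the relation coming from one of the hexagonal 2-cells $e^2_\pm$. Traversing the boundary of $e^2_+$ yields an identity
\[
\bD(\la_0^{1/2})\,A_1^{-1}\,\bD(\la_1^{1/2})\,A_2^{-1}\,\bD(\la_2^{1/2})\,A_0^{-1} = I
\]
in $\PSL$. Substituting the one-parameter family for each $A_k$ and invoking the $\bZ/3$-symmetry of the cell decomposition (which reduces the matter to a single scalar equation for a single value of $k$), a computation using the same hyperbolic identities already exploited in the derivation of \eqref{eq:bkck-1} forces $\alpha_k^2 = |b_kc_k|-1$, i.e.\ $\alpha_k = \sqrt{|b_kc_k|-1}$. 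Substituting this value back into the parameterization produces precisely the formula \eqref{eq:A_kA_k}, and the assertion $A_k^2 = \pm I$ then follows at once from $\trace A_k = a_k + d_k = 0$ together with $\det A_k = 1$.

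The main obstacle is the step that extracts the value $\alpha_k = \sqrt{|b_kc_k|-1}$ from the 2-cell relation; although routine in principle, it involves a matrix computation with several delicate cancellations. A more conceptual alternative I would prefer if it can be made rigorous is to identify $A_k$ with the $\pi$-rotation about the midpoint of the seam $e^1_k$: by Lemma \ref{lem:axis1} the normalization \eqref{eq:norm2} places the foot of the common perpendicular of the lifts of $\partial_{k-1}P$ and $\partial_kP$ on the imaginary axis at $\sqrt{-1}$, and the standard cocycle should encode precisely this $\pi$-rotation, whose trace is manifestly zero and whose matrix, after diagonalizing on the unit semicircle and using the normalization, is immediately of the form \eqref{eq:A_kA_k}.
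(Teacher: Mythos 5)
Your first half — determining $A_k$ up to one positive scalar from the products $a_kd_k$, $b_kc_k$, $a_kb_k=c_kd_k$ and the sign conditions — is exactly the first half of the paper's proof: in the paper's notation you have $A_k=B_k\bD(\mu_k)$ with $\alpha_k=\sqrt{|b_kc_k|-1}\,\mu_k$ still free. The gap is precisely at the step you flag as the crux: a \emph{single} hexagon relation does not force $\alpha_k^2=|b_kc_k|-1$. Writing $A_k\inv=\bD(\mu_k\inv)B_k$ in $\PSL$, your relation becomes $\bD(\la_0^{1/2}\mu_1\inv)B_1\bD(\la_1^{1/2}\mu_2\inv)B_2\bD(\la_2^{1/2}\mu_0\inv)B_0=1$, and because each $B_k$ has trace zero, the equation $\bD(y_1)B_1\bD(y_2)B_2\bD(y_3)B_0=1$ has exactly \emph{two} positive solutions, $(y_1,y_2,y_3)=(\la_0^{1/2},\la_1^{1/2},\la_2^{1/2})$ and $(\la_0^{-1/2},\la_1^{-1/2},\la_2^{-1/2})$: the corresponding $SL_2(\bR)$ products differ by $-I$, which is invisible in $\PSL$ (this is the content of Lemma \ref{lem:signhex}). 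In the equilateral case $l_0=l_1=l_2$ this is transparent: the relation reads $(\bD(y)B)^3=\pm I$, i.e.\ $\sqrt{|bc|-1}\,(y-y\inv)=\pm1$, with the two roots $y=\la^{\pm1/2}$. So your single $2$-cell only yields $\mu_k\in\{1,\la_{k-1}\}$, i.e.\ $\alpha_k\in\{\sqrt{|b_kc_k|-1},\ \la_{k-1}\sqrt{|b_kc_k|-1}\}$ — exactly the ambiguity $\mu_k=\la_{k-1}^{-n}$ that the second half of the paper's proof is devoted to eliminating. (Also, the $\bZ/3$-symmetry does not reduce the matter to one scalar equation in one unknown: the relation couples $\alpha_0,\alpha_1,\alpha_2$, and the symmetry simultaneously permutes the lengths $l_k$, so no reduction occurs unless $l_0=l_1=l_2$.)

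The route can be repaired, but you must use more input than one face. Imposing the relation of the \emph{other} hexagon $e^2_-$ as well, the same analysis gives $\mu_k\in\{1,\la_{k-1}^{-1}\}$, and since $\la_{k-1}>1$ the two solution sets meet only at $\mu_k=1$; equivalently, one must control the sign of an $SL_2(\bR)$-lift of the hexagon word, which is what Keen's lemma (Lemma \ref{lem:Keen}) and Lemma \ref{lem:signhex} encode. The paper instead resolves the ambiguity geometrically: the feet of the seams bisect each boundary geodesic, so by Lemma \ref{lem:axis1} one gets $\mu_1=\la_0^{-n}$ for an integer $n$, and the $\pi$-rotation of $P$ interchanging $\pa_1P$ and $\pa_2P$ forces $n=0$. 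Finally, your ``conceptual alternative'' — that $\rho(e^1_k)$ is the half-turn about the midpoint of the seam — is true but is essentially the statement being proved (it is exactly where $\trace A_k=0$, i.e.\ $\mu_k=1$, comes from); the normalization \eqref{eq:norm2} only locates one foot of a seam at $\sqrt{-1}$ and does not by itself identify $\rho(e^1_k)$ with that half-turn, so as it stands this is a hope, not a proof.
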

\begin{proof} First we describe $A_0$ in terms of $|b_0c_0|$ and $b_0$. 
We have 
$$
c_0 = -|b_0c_0|/b_0
$$ 
since $b_0c_0 < 0$. From \eqref{eq:bkck-1} $0 > 1 - |b_0c_0| = 
1+b_0c_0 = a_0d_0$. 
Since $a_0b_0= c_0d_0$, we have 
$|a_0b_0|^2 = |a_0b_0c_0d_0|
= |a_0d_0||b_0c_0| = |b_0c_0|(|b_0c_0|-1)$, so that
$$
a_0 = |a_0b_0|/b_0 = \sqrt{|b_0c_0|(|b_0c_0|-1)}/b_0
$$
by {\eqref{eq:ad_bc}}. Again by $a_0d_0 = -(|b_0c_0| -1)$, we have 
$$
d_0 = -(|b_0c_0|-1)/a_0 = {-}b_0\sqrt{(|b_0c_0|-1)/|b_0c_0|}.
$$
Consequently we obtain
$$
\aligned
A_0 &= \pm\twomatrix{a_0}{b_0}{c_0}{d_0}
= \pm\twomatrix{\sqrt{|b_0c_0|(|b_0c_0|-1)}/b_0}{b_0}{-|b_0c_0|/b_0}{{-}b_0\sqrt{(|b_0c_0|-1)/|b_0c_0|}}\\
&= \pm\twomatrix{\sqrt{|b_0c_0|(|b_0c_0|-1)}/|b_0|}{|b_0|}{-|b_0c_0|/|b_0|}{{-}|b_0|\sqrt{(|b_0c_0|-1)/|b_0c_0|}}\\
&= \pm\twomatrix{\sqrt{|b_0c_0|-1}}{|b_0c_0|^{1/2}}{-|b_0c_0|^{1/2}}{{-}\sqrt{|b_0c_0|-1}}\twomatrix{|b_0|^{-1}|b_0c_0|^{1/2}}{0}{0}{|b_0||b_0c_0|^{-1/2}}.
\endaligned
$$
Hence, by the $\bZ/3$-symmetry, if we denote $\mu_k := |b_k|^{-1}|b_kc_k|^{1/2} > 0$ and
\begin{equation}\label{eq:B_k}
B_k := \pm \twomatrix{\sqrt{|b_kc_k|-1}}{|b_kc_k|^{1/2}}{{-}|b_kc_k|^{1/2}}{-\sqrt{|b_kc_k|-1}}, 
\end{equation}
then we have 
$$
A_k = B_k\bD(\mu_k)
$$
for any $k \in \bZ/3$. Here we remark ${B_k}^2 = \pm I$
and $B_k$ is normalized. 
Hence {$\bD(\mu_k){A_k}^{-1} = B_k\inv$ is also normalized, and so we have}
\begin{equation}\label{eq:mu_k^4}
{\mu_k} = \left(\frac{a_kc_k}{b_kd_k}\right)^{1/4}
{= \frac{|c_k|^{1/2}}{|b_k|^{1/2}}}.
%= \left(\frac{(-b_k)a_k}{d_k(-c_k)}\right)^{1/4}
%%%%= s_{(A_k)^{-1}},
\end{equation}
%where $s_{(A_k)^{-1}}$ is the positive number introduced in Remark \ref{lem:norm0}. 
\par
Finally we identify the value $\mu_k$. 
We begin by looking at the shortest geodesic between $\pa_0P$ and $\pa_1P$. 
The point on the imaginary axis nearest to the axis of $\rho(\gamma_1)$ is ${\la_0}^{2n+1}\sqrt{-1}$ for some $n \in \bZ$, since that for
$\rho(\gamma_2)$ is $\sqrt{-1}$
{and each of these points is a lift of the foot of a seam of the hyperbolic pants $P$ on $\pa_0P$}. 
On the other hand, $\pa_1P$ is covered by the axis of $\rho(\gamma_1) = \bD({\la_0}^{1/2}){A_1}\inv \bD({\la_1})A_1\bD({\la_0}^{-1/2})$, where we have
$\bD({\la_0}^{1/2}){A_1}\inv  = \pm\twomatrix{{\la_0}^{1/2}}{0}{0}{{\la_0}^{-1/2}}\twomatrix{d_1}{-b_1}{-c_1}{a_1} = \pm\twomatrix{{\la_0}^{1/2}d_1}{{-}{\la_0}^{1/2}b_1}{-{\la_0}^{-1/2}c_1}{{\la_0}^{-1/2}a_1}$. Hence, by Lemma \ref{lem:axis1}, the point on the imaginary axis nearest to the axis of $\rho(\gamma_1)$ is 
$$
\sqrt{{\frac{{\la_0}b_1d_1}{{\la_0}\inv a_1c_1}}}\sqrt{-1}
= {\la_0}{\mu_1}^{-2}\sqrt{-1}
$$
{by \eqref{eq:mu_k^4}}.
Hence we obtain $\mu_1 = {\la_0}^{-n}$. 
By the $\bZ/3$-symmetry, we have $\mu_k = {\la_{k-1}}^{-n}$ for any $k \in \bZ/3$. Here we remark the integer $n$ is constant because of the $\bZ/3$-symmetry and the connectivity of the Teichm\"uller space of the surface $P$. 
Consequently we obtain 
\begin{equation}\label{A_kla_k-1}
A_k = B_k\bD({\la_{k-1}}^{-n}).
\end{equation}
Next we consider the rotation of $P$ by $\pi$ radian which permutes $\pa_1P$ and $\pa_2P$. The groupoid cocycle $\bar\rho$ defined by $\bar\rho(e^1_0) = {A_1}^{-1}$, 
$\bar\rho(e^1_1) = {A_0}^{-1}$, $\bar\rho(e^1_2) = {A_2}^{-1}$, 
{$\bar\rho(e^1_{00}) = \bar\rho(e^1_{01}) = \bD(\la_0)$,
$\bar\rho(e^1_{10}) = \bar\rho(e^1_{11}) = \bD(\la_2)$
and $\bar\rho(e^1_{20}) = \bar\rho(e^1_{21}) = \bD(\la_1)$}
is a holonomy of the the hyperbolic structure after the rotation, whose 
standard cocycle we denote by $\rho'$. Then, by Lemma \ref{lem:diagonal}, 
we have 
$\rho'(e^1_0) = \bD({t_0}^{-1}){A_1}^{-1}\bD(t_2) = \bD({t_0}^{-1}{\mu_1}^{-1})B_1\bD(t_2)$, 
$\rho'(e^1_1) = \bD({t_1}^{-1}){A_0}^{-1}\bD(t_0) = \bD({t_1}^{-1}{\mu_0}^{-1})B_0\bD(t_0)$ and
$\rho'(e^1_2) = \bD({t_2}^{-1}){A_2}^{-1}\bD(t_1) = \bD({t_2}^{-1}{\mu_2}^{-1})B_2\bD(t_1)$ for some $t_0, t_1, t_2 > 0$. Since each $\rho'(e^1_k)$ is normalized, 
we have $t_0 = {\mu_1}\inv$, $t_1 = {\mu_0}\inv$ and $t_2 = {\mu_2}\inv$ by Lemma\ref{lem:norm0}. 
Hence we have 
\begin{equation}\label{eq:rho'}
\aligned
& \rho'(e^1_0) = \bD({\mu_1}){A_1}^{-1}\bD({\mu_2}\inv) = B_1\bD({\la_1}^{n}),\\
& \rho'(e^1_1) = \bD({\mu_0}){A_0}^{-1}\bD({\mu_1}\inv) = B_0\bD({\la_0}^{n}),\\
& \rho'(e^1_2) = \bD({\mu_2}){A_2}^{-1}\bD({\mu_0}\inv) = B_2\bD({\la_2}^{n}). 
\endaligned
\end{equation}
On the other hand, we compute $\rho'(e^1_k)$ directly based on our computation stated above. We denote by $B'_k$ the corresponding $B_k$ to $\rho'$. Then, from \eqref{eq:bkck} and \eqref{eq:B_k}, we have $B'_0 = B_1$, 
$B'_1 = B_0$ and $B'_2 = B_2$. Hence, by \eqref{A_kla_k-1}, we obtain
$$
\aligned
& \rho'(e^1_0) = B'_0\bD({\la_1}^{-n}) = B_1\bD({\la_1}^{-n}), \\
& \rho'(e^1_1) = B'_1\bD({\la_0}^{-n}) = B_0\bD({\la_0}^{-n}), \\
& \rho'(e^1_2) = B'_2\bD({\la_2}^{-n}) = B_2\bD({\la_2}^{-n}).
\endaligned
$$
Comparing this with \eqref{eq:rho'}, we conclude $n=0$, and so $\mu_k = 1$ for
any $k \in \bZ/3$. This completes the proof.
\end{proof}
In particular, the point on the imaginary axis nearest to the axis of $\rho(\gamma_1)$ is ${\la_0}\sqrt{-1}$, which matches our geometric intuition. \par
We will discuss lifts of the standard cocycle to $SL_2(\bR)$ in \S\ref{sec:spin}.

\section{The Fenchel-Nielsen coordinates}\label{sec:FN}

Now we consider a closed oriented surface $\Sg$ of genus $g \geq 2$. 
Fix a pants decomposition $\cP = \{\delta_i\}^{3g-3}_{i=1}$ of the surface $\Sg$. {We suppose} each simple closed curve $\delta_i$ {is oriented}. 
Using the orientation, we regard it as a $C^\infty$ embedding $\delta_i: \bR/2\pi\bZ \to \Sg$, and take a closed annulus neighborhood $\mathfrak{a}_i$ 
such that $\mathfrak{a}_i \cap\mathfrak{a}_j = \emptyset$ for any $i \neq j$,
and choose an orientation-preserving diffeomorphism $\alpha_i: [-1,1] \times(\bR/2\pi\bZ) \overset\cong\to \mathfrak{a}_i$ for each $1 \leq i \leq 3g-3$. The choice of $\alpha_i$ has an ambiguity coming from a half integer twist along $\delta_i$. The complement $\Sg\setminus \bigcup^{3g-3}_{i=1}(\mathfrak{a}_i)^\circ$ of the interiors $(\mathfrak{a}_i)^\circ = \alpha_i(\mathopen]-1,1\mathclose[\times (\bR/2\pi\bZ))$ of the annulus neighborhood $\mathfrak{a}_i$ is diffeomorphic to the disjoint union of $2g-2$ pairs of pants. We label each of them, and denote $P_j$, $1 \leq j \leq 2g-2$. See Figures 1 and 3. 
\par
Labelling the boundary components of each pair of pants $P_j$, 
we plant $P_j$ with the pants decomposition 
$(P, \{e^0_{k0}, e^0_{k1}, e^1_k, e^1_{k0}, e^1_{k1}, e^2_+, e^2_-; \,\, k \in \bZ/3\})$ in the previous section to obtain a cell decomposition 
$(P_j, \{e^0_{(j)k0}, e^0_{(j)k1}, e^1_{(j)k}, e^1_{(j)k0}, e^1_{(j)k1}, e^2_{(j)+}, e^2_{(j)-}; \,\, k \in \bZ/3\})$. Here, using the map $\alpha_i$ we make the vertices $e^0_{(j)k0}$ and $e^0_{(j)k1}$ located at
$$
\aligned 
&e^0_{(j)k0} := \begin{cases}
\alpha_i(-1, 0), 
&\text{if $\pa_kP$ lies on the left-hand side of $\delta_i$,}\\
\alpha_i(+1, 0), 
&\text{if $\pa_kP$ lies on the right-hand side of $\delta_i$,}
\end{cases}\\
&e^0_{(j)k1} := \begin{cases}
\alpha_i(-1, \pi), 
&\text{if $\pa_kP$ lies on the left-hand side of $\delta_i$,}\\
\alpha_i(+1, \pi), 
&\text{if $\pa_kP$ lies on the right-hand side of $\delta_i$.}
\end{cases}\\
\endaligned
$$
As in Figure 3, we can paraphrase this definition. Suppose $\pa_{k'}P_{j'}$ lies on the left-hand side of $\delta_i$, and $\pa_{k''}P_{j''}$ on the right-hand side. Then the orientation of $\pa_{k'}P_{j'}$ is the same as $\delta_i$, and that of $\pa_{k''}P_{j''}$
is the reverse. Hence we have 
$\alpha_i(-1,0) = e^0_{(j')k'0}$, $\alpha_i(-1,\pi) = e^0_{(j')k'1}$, 
$\alpha_i(+1,0) = e^0_{(j'')k''0}$ and $\alpha_i(+1,\pi) = e^0_{(j'')k''1}$. 
We introduce an edge $e^1_{((i))0}$ (resp.\ $e^1_{((i))1}$) whose characteristic map is given by $t \in [-1, 1] \mapsto \alpha_i(t, 0)$ (resp.\ $t \in [-1,1] \mapsto \alpha_i(t, \pi)$), and a face $e^2_{((i))0}$ (resp.\ $e^2_{((i))1}$) 
whose characteristic map is given by $(t, s)\in [-1, 1]\times [0, \pi] \mapsto \alpha_i(t, s)$ (resp.\ $(t, s)\in [-1, 1]\times [0, \pi] \mapsto \alpha_i(t, s+\pi)$). 

\begin{figure}
\begin{center}
\begin{tikzpicture}[scale=1.4]
\tikzset{Process/.style={rectangle,  draw,  text centered, text width=37mm, minimum height=1.5cm}};
\coordinate (--) at (1,1); 
\coordinate (-0) at (1,2); 
\coordinate (-+) at (1,3); 
\coordinate (+-) at (3,1); 
\coordinate (+0) at (3,2); 
\coordinate (++) at (3,3); 
\fill (--) circle (1.4pt) (-0) circle (1.4pt) (-+) circle (1.4pt) 
 (+-) circle (1.4pt) (+0) circle (1.4pt) (++) circle (1.4pt) ;
\draw[thick] (0.8,1) -- (3.2,1);
\draw[thick] (2.3,3) -- (2.2,2.9); 
\draw[thick] (2.3,3) -- (2.2,3.1); 
\draw[thick] (0.8,2) -- (3.2,2);
\draw[thick] (2.3,2) -- (2.2,1.9); 
\draw[thick] (2.3,2) -- (2.2,2.1); 
\draw[thick] (0.8,3) -- (3.2,3);
\draw[thick] (2.3,1) -- (2.2,0.9); 
\draw[thick] (2.3,1) -- (2.2,1.1); 
\draw[thick] (1,0.5) -- (1,3.5);
\draw[thick] (1,1.6) -- (0.9,1.5); 
\draw[thick] (1,1.6) -- (1.1,1.5); 
\draw[thick] (1,2.6) -- (0.9,2.5); 
\draw[thick] (1,2.6) -- (1.1,2.5); 
\draw[thick, dashed] (2,0.5) -- (2,3.5);
\draw[thick] (2,3.5) -- (1.9,3.4); 
\draw[thick] (2,3.5) -- (2.1,3.4); 
\draw[thick] (3,0.5) -- (3,3.5);
\draw[thick] (3,1.4) -- (2.9,1.5); 
\draw[thick] (3,1.4) -- (3.1,1.5); 
\draw[thick] (3,2.4) -- (2.9,2.5); 
\draw[thick] (3,2.4) -- (3.1,2.5); 
\node[font=\tiny] (0) at (1.8,0.8) {$0$};
\node[font=\tiny] (pi) at (1.8,1.8) {$\pi$};
\node[font=\tiny] (2pi) at (1.8,2.8) {$2\pi$};
\node[font=\tiny] (delta) at (2.2,3.3) {$\delta_i$};
\node[font=\tiny] (10) at (2.4,2.7) {$e^1_{((i))0}$};
\node[font=\tiny] (11) at (2.4,1.7) {$e^1_{((i))1}$};
\node[font=\tiny] (100) at (2.4,0.7) {$e^1_{((i))0}$};
\node[font=\tiny] (21) at (1.6,2.3) {$e^2_{((i))1}$};
\node[font=\tiny] (20) at (1.6,1.3) {$e^2_{((i))1}$};
\node[font=\tiny] (minus) at (1.2,0.8) {$-1$};
\node[font=\tiny] (plus) at (2.8,0.8) {$+1$};
\node[font=\tiny] (0010) at (0.4,1) {$e^0_{(j')k'0}$};
\node[font=\tiny] (1010) at (0.4,1.5) {$e^1_{(j')k'0}$};
\node[font=\tiny] (0011) at (0.4,2) {$e^0_{(j')k'1}$};
\node[font=\tiny] (1011) at (0.4,2.5) {$e^1_{(j')k'1}$};
\node[font=\tiny] (0010b) at (0.4,3) {$e^0_{(j')k'0}$};
\node[font=\tiny] (0110) at (3.8,1) {$e^0_{(j'')k''0}$};
\node[font=\tiny] (1110) at (3.8,1.5) {$e^1_{(j'')k''1}$};
\node[font=\tiny] (0111) at (3.8,2) {$e^0_{(j'')k''1}$};
\node[font=\tiny] (1111) at (3.8,2.5) {$e^1_{(j'')k''0}$};
\node[font=\tiny] (0110b) at (3.8,3) {$e^0_{(j'')k''0}$};
\end{tikzpicture}\par\medskip\noindent
\end{center}
\label{fig:Ann}
\caption{Cell decomposition of the annulus neighborhood $\mathfrak{a}_i$ of $\delta_i$}
\end{figure}
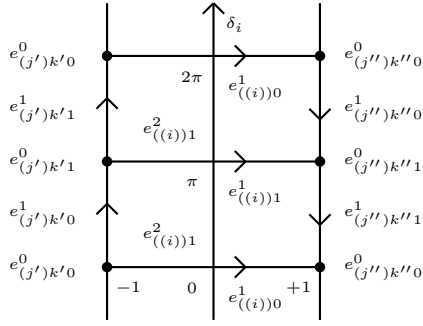

Thus we obtain a cell decomposition associated with the pants decomposition $\cP$. We denote by $\Sigma_{\cP}$ the cell complex $\Sg$ with the cell decomposition. Here it should be remarked that each vertex of $\Sigma_{\cP}$ is located in some pair of pants $P_j$. 
\par
Now we consider a holonomy $\rho$ of a hyperbolic structure on the surface $\Sg$. Its conjugacy class $[\rho]$ is a point of the Teichm\"uller space $\Tg$. 
 It can be regarded as a groupoid cocycle from $\Pi \Sigma\op_{\cP}\vert_{(\Sigma_{\cP})^{(0)}}$ to $\PSL$, where $\Pi \Sigma\op_{\cP}\vert_{(\Sigma_{\cP})^{(0)}}$ is the restriction of the dual category $\Pi \Sigma\op_{\cP}$ of the fundamental groupoid restricted to the vertices $(\Sigma_{\cP})^{(0)}$. The restriction of $\rho$ to each pants $P_j$ can be regarded as a holonomy of the hyperbolic structure on $P_j$ with geodesic boundary. Hence we may consider $\rho\vert_{\Pi {P_j}\op\vert_{(P_j)^{(0)}}}$ to be the standard cocycle by replacing it by its appropriate conjugate, which we denote by $c_\rho$ or $c_{[\rho]}$.  Since each vertex of $\Sigma_{\cP}$ is located in some pair of pants, the cocycle $c_\rho$ is uniquely determined by the point $[\rho] \in \Tg$. 
We call $c_\rho$ the {\it standard cocycle} of the point $[\rho] \in \Tg$. %\par
For the rest of this paper, we suppose $\rho$ is the standard cocycle of the point $[\rho]$, or equivalently $c_\rho = \rho$.  \par
Now we go back to Figure 3. Let $l_i >0$ be the geodesic length of the simple closed curve $\delta_i$ with respect to the hyperbolic structure $[\rho]$, and 
let $\la_i := \exp(l_i/2)$. Then, from the definition of the standard cocycle on a pair of pants in \S1, we have 
\begin{equation}\label{eq:9.7}
\rho(e^1_{(j')k'0}) = \rho(e^1_{(j')k'1}) = 
\rho(e^1_{(j'')k''0}) = \rho(e^1_{(j'')k''0}) = \bD({\la_i}^{1/2}).
%\pm \twomatrix{{\la_i}^{1/2}}{0}{0}{{\la_i}^{-1/2}} 
%\eqno{\eqref{eq:9.7}}
\end{equation}
We denote $A = \rho(e^1_{((i))0})$ and $B = \rho(e^1_{((i))1})$. Then, looking at the $2$-cells $e^2_{((i))0}$ and $e^2_{((i))1}$, we have 
$B\bD({\la_i}^{1/2}) = \bD({\la_i}^{-1/2})A$ and $A\bD({\la_i}^{1/2}) = \bD({\la_i}^{-1/2})B$. If we denote $J := \pm\twomatrix{0}{-1}{1}{0}$, then we have $J^2 = \pm E_2$ and $J\bD({\la_i}^{1/2})J = \bD({\la_i}^{-1/2})$. 
{Here it should be remarked that one cannot distinguish $E_2$ and $-E_2$ as elements of $\PSL$.}
Hence 
$$
J B\bD({\la_i}^{1/2}) = \bD({\la_i}^{1/2})J A, \quad\text{and}\quad
J A\bD({\la_i}^{1/2}) = \bD({\la_i}^{1/2})J B. 
$$
From Lemma \ref{lem:ABBA} (3) we have  
$$
J A = J B = \pm\twomatrix{T_i}{0}{0}{{T_i}\inv},\quad\text{or equivalently}\quad
A = B = \pm \twomatrix{0}{-{T_i}\inv}{T_i}{0}
$$
for some $T_i$. A hyperbolic pair of pants with geodesic boundary has $3$ seams, the shortest geodesics among the geodesic boundary components.
Since the pair admits an orientation-reversing isometry, the foots of seams on a single boundary component divide the component into two parts of the same length. Realize $\delta_i$'s as geodesics with respect to the hyperbolic structure $[\rho]$. Then each pair of pants $P_j$ is a hyperbolic pair of pants with geodesic boundary, and has $3$ seams. In $P$ the $0$-cell $e^0_{k0}$ on $\pa_kP$ is the foot of the seam $e^1_k$ from $\pa_{k-1}P$. 
The twisting parameter $\tau_i\bmod l_i\bZ$ along $\delta_i$ is defined by the oriented distance from the foot of the seam from $\pa_{k'-1}P_{j'}$ to that from $\pa_{k''-1}P_{j''}$ {as in Figure 4}.
\begin{figure}[ht]
\begin{center}
\begin{tikzpicture}
\tikzset{Process/.style={rectangle,  draw,  text centered, text width=37mm, minimum height=1.5cm}};
\coordinate (UL) at (-4,3);
\coordinate (UM) at (0,2.4);
\coordinate (UR) at (4,3);
\coordinate (DL) at (-4,0);
\coordinate (DM) at (0,0.6);
\coordinate (DR) at (4,0);
\coordinate (LL) at (-4,0.2);
\coordinate (LM) at (0.08,0.8);
\coordinate (RR) at (4,2.6);
\coordinate (RM) at (0.1,2.0);
\coordinate (al) at (-0.03,2.10);
\coordinate (am) at (0.06,2.23);
\coordinate (ar) at (0.19,2.12);
\coordinate (L1) at (-4,2.4); 
\coordinate (L2) at (-4,0.6); 
\coordinate (R1) at (4,2.0); 
\coordinate (R2) at (4,0.6); 
\coordinate (LMr) at (0.20,0.78);
\coordinate (RMr) at (0.24,1.96);
%\fill (LM) circle (1pt) (RM) circle (1pt);
\draw[very thick, bend right = 15] (UL) to (UR);
\draw[very thick, bend left = 15] (DL) to (DR);
\draw[thick, bend left = 20] (UM) to (DM);
\draw[dashed, bend right = 20] (UM) to (DM);
\draw[thick] (LL) to [out=13, in=180] (LM);
\draw[thick] (RR) to [out=193, in=0] (RM);
\draw[thick] (al) to (am);
\draw[thick] (ar) to (am);
\draw[very thick, bend left = 80] (L1) to (L2);
\draw[very thick, bend right = 80] (R1) to (R2);
\draw[->, thick, bend right = 16] (LMr) to (RMr);
\node (D) at (0,2.8) {$\delta_i$};
\node (P1) at (-2,1.4) {$P_{j'}$};
\node (P2) at (2.4,1.4) {$P_{j''}$};
\node (T) at (0.6,1.3) {$\tau_i$};
\draw (-0.02,0.9) to (0.08, 0.9);
\draw (-0.02,0.9) to (-0.05, 0.8);
\draw (0.2,2.1) to (0.21,2.0);
\draw (0.2,2.1) to (0.1,2.095);
\end{tikzpicture}\par\medskip\noindent
\end{center}
\caption{{The oriented distance $\tau_i$.}}
\end{figure}

\begin{thm}\label{thm:twistparam} 
$$
\tau_i \equiv -2\log T_i \pmod{l_i\bZ}.
$$
\end{thm}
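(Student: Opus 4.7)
The plan is to realize both feet of seams in a single developing map into $\bH$ and compute their oriented hyperbolic distance along the lifted $\delta_i$.

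First, I would fix a developing map so that $\delta_i$ lifts to the imaginary axis (oriented upward, matching $\delta_i$'s positive direction, via $\rho(\delta_i) = \bD(\la_i)$), and choose the lift of $e^0_{(j')k'0}$ to develop to $\sqrt{-1}$. By \eqref{eq:9.7} the boundary $\pa_{k'}P_{j'}$ develops onto the imaginary axis, and applying the proof of Theorem \ref{thm:A_k} to $P_{j'}$ (specifically the identification $\mu_k = 1$) identifies $\sqrt{-1}$ with the foot on $\pa_{k'}P_{j'}$ of the seam $e^1_{(j')k'}$ from $\pa_{k'-1}P_{j'}$, i.e., the left foot in the definition of $\tau_i$.

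Second, I would also choose the lift of $e^0_{(j'')k''0}$ to develop to $\sqrt{-1}$, so that the same argument applied to $P_{j''}$ identifies this chosen lift with the right foot in its own standalone developing. Lifting the cross-annulus edge $e^1_{((i))0}$ starting from $\widetilde{e^0_{(j')k'0}}$, the endpoint is the adjacent lift $A \cdot \widetilde{e^0_{(j'')k''0}}$ of $e^0_{(j'')k''0}$, whose developed position is
\[
A \cdot \sqrt{-1} \;=\; -\frac{1}{T_i^2 \sqrt{-1}} \;=\; T_i^{-2}\sqrt{-1}.
\]
This adjacent lift, sitting on the same lift of $\delta_i$ as the left foot, is the right foot in the combined developing. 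Consequently the oriented distance along $\delta_i$ from $\sqrt{-1}$ to $T_i^{-2}\sqrt{-1}$ equals $\log(T_i^{-2}) = -2\log T_i$, giving $\tau_i \equiv -2\log T_i \pmod{l_i\bZ}$. The indeterminacy modulo $l_i\bZ$ reflects the freedom to choose a different adjacent lift of $e^0_{(j'')k''0}$ by applying a power of $\rho(\delta_i) = \bD(\la_i)$, shifting the developed point by a multiple of $l_i$.

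The main obstacle is reconciling the orientation of $\pa_{k''}P_{j''}$, which is opposite to that of $\delta_i$, with the fact that the standard cocycle on $P_{j''}$ places $e^0_{(j'')k''0}$ on the imaginary axis with $\pa_{k''}P_{j''}$'s positive direction pointing upward. This reversal is encoded in $A = \pm\twomatrix{0}{-T_i\inv}{T_i}{0}$ being an involution ($A^2 = \pm E$) that acts as the $\pi$-rotation about its fixed point $T_i^{-1}\sqrt{-1}$, reversing orientation along the imaginary axis and symmetrically swapping the two foot lifts located at $\sqrt{-1}$ and $T_i^{-2}\sqrt{-1}$.
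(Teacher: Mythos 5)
Your proposal is correct and follows essentially the same route as the paper: both identify the foot of the seam of $P_{j'}$ with $\sqrt{-1}$ via the normalization $a'b'=c'd'$, locate the foot of the seam of $P_{j''}$ at $T_i^{-2}\sqrt{-1}$ on the developed lift of $\delta_i$, and read off the oriented distance $-2\log T_i$. The only (inessential) difference is that you transport the second foot across the annulus by applying $\rho(e^1_{((i))0})$ in the developing-map picture, whereas the paper recomputes it intrinsically as the nearest point on the imaginary axis to the axis of the conjugated holonomy $\rho(e^1_{((i))0}e^1_{(j'')k''}\cdots)$ via Lemma \ref{lem:axis1}.
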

\begin{proof}
Take our basepoint at $e^0_{(j')k'0}$, which is located in the left-hand side of $\delta_i$, and denote $\twomatrix{a'}{b'}{c'}{d'} = \rho(e^1_{(j')k'})$ and
$\twomatrix{a''}{b''}{c''}{d''} = \rho(e^1_{(j'')k''})$. By the definition of the standard cocycle, we have $a'b'= c'd'$ and $a''b''= c''d''$. The axis of the holonomy $\rho$ along $\pa_{k'}P_{j'}$ equals the imaginary axis, since the basepoint is located on the boundary. Hence, from $a'b'= c'd'$, $\sqrt{-1}$ is the foot of a seam of $P_{j'}$. \par
Next we compute the foot of the seam from the boundary component $\pa_{k'-1}P_{j'}$, i.e., the foot of the shortest geodesic from 
the axis of 
$\rho(e^1_{((i))0}e^1_{(j'')k''}e^1_{(j'')k''-1,1}e^1_{(j'')k''-1,0}
{e^1_{(j'')k''}}\inv{e^1_{((i))0}}\inv)$. Since 
$$
\rho(e^1_{((i))0}e^1_{(j'')k''}) = \pm\twomatrix{0}{-{T_i}\inv}{T_i}{0}\twomatrix{a''}{b''}{c''}{d''} = \pm \twomatrix{-{T_i}\inv c''}{-{T_i}\inv d''}{{T_i}a''}{{T_i}b''},
$$
the foot is ${T_i}^{-2}\sqrt{-1}$. Hence the oriented distance equals 
$-2\log(T_i)$. This proves the theorem. 
\end{proof}

In the textbook \cite[\S3.2.4]{IT}, the parameter $\tau_i/l_i: \Tg \to \bR$ is defined as a lift of $1/l_i$ times the oriented distance $\tau_i/l_i: \Tg \to \bR/\bZ$ by using the fact $\pi_1(\Tg)$. 
Based on Theorem \ref{thm:twistparam}, we can re-define the twisting parameter 
$$
\tau_i = \tau^{\cP}_i: T_g \mapsto \bR, \quad [\rho] \mapsto -2\log T_i,
$$
for any $1 \leq i \leq 3g-3$. Here $T_i$ is given by $\rho(e^1_{((i))0}) = \pm\twomatrix{0}{-{T_i}\inv}{T_i}{0}$ with respect to the standard cocycle $\rho$ for the point $[\rho] \in \Tg$. 
In particular, our re-definition does not use the fact $\pi_1(\Tg) = 1$. 
Thus we obtain the Fenchel-Nielsen coordinates
$$
\mathrm{FN}^{\cP}:= (l^\cP_1, \dots, l^\cP_{3g-3}, \tau_1^{\cP}, \dots, 
\tau_{3g-3}^{\cP}): T_g \to \bR_{>0}^{3g-3}\times \bR^{3g-3}
$$
with respect to the pants decomposition $\cP$.\par

\section{The first variation of the standard cocycle.}

Now we compute the first variation of the standard cocycle, which will be used in our proof of Wolpert's formula \eqref{eq:Wformula} 
in the next section. The Teichm\"uller space $\Tg$ is an open subset of the moduli $H^1(\Sg; \PSL)$ of flat $\PSL$-bundles on the surface $\Sg$. Hence the tangent space $T_{[\rho]}\Tg$ 
at the point $[\rho] \in \Tg$ is the first cohomology group
\begin{equation}\label{eq:tang}
T_{[\rho]}T_g = H^1(\Sg; \sltwo_{\mathrm{Ad}\rho}).
\end{equation}
Here $\sltwo_{\mathrm{Ad}\rho}$ means the locally constant sheaf on $\Sg$ given by the composite of $\rho$ and the adjoint action $\Ad: \PSL \to GL(\sltwo)$ of the Lie group $\PSL$ on its own Lie algebra $\sltwo$. {See, for example, \cite{G1} and its references.}
\par
Let $\rho_t$, $|t|\ll 1$ with $\rho_0 = \rho$ be a $1$-parameter deformation of the standard cocycle $\rho$ for a point  $[\rho] \in T_g$. 
Under the identification \eqref{eq:tang}, the tangent vector $\overset\cdot\rho := \dfrac{d}{dt}\Bigr\vert_{t=0}[\rho_t] \in T_{[\rho]}T_g$ equals the cohomology class of the cocycle $z(\overset\cdot\rho) \in C^1(\Sigma_\cP; \sltwo_{\mathrm{Ad}\rho})$ on the cell complex $\Sigma_\cP$ defined by 
$$
z(e)(\overset\cdot\rho) := \overset\cdot\rho(e)\rho(e)\inv 
\in (\sltwo_{\mathrm{Ad}\rho})_{\varphi(0)}
$$
for each $1$-cell $e$ whose characteristic map is $\varphi: [0,1] \to 
\overline{e} \subset \Sg$, {where $\overline{e}$ is the closure of the cell $e$}.
This definition follows from our convention of the concatenation. 
{In fact, if $e$ and $e'$ are composable, then Leibniz' rule implies $\overset\cdot\rho(ee')\rho(ee')\inv = 
\overset\cdot\rho(e)\rho(e')\rho(ee')\inv + \rho(e)\overset\cdot\rho(e')\rho(ee')\inv = \overset\cdot\rho(e)\rho(e)\inv + \rho(e)\overset\cdot\rho(e')\rho(e')\inv\rho(e)\inv$, in other words,  $z(e)(\overset\cdot\rho)$ is a $1$-cocycle with values in the left module $\sltwo_{\mathrm{Ad}\rho}$}.
Also we may regard it as an $(\sltwo_{\mathrm{Ad}\rho})_{\varphi(0)}$-vallued $1$-form on $\Tg$
$$
z(e) \in (\sltwo_{\mathrm{Ad}\rho})_{\varphi(0)}\otimes \Omega^1(\Tg),
$$
when $\overset\cdot\rho$ runs over the whole tangent space $T_{[\rho]}\Tg$. 
From the cocycle condition, the $1$ cell $e\inv$ with the reverse orientation satisfies
\begin{equation}\label{eq:invz}
z(e\inv)(\overset\cdot{\rho}) = -\Ad(\rho(e))\inv z(e)(\overset\cdot{\rho}). 
\end{equation}
\par
%In the computation below, we write simply $\overset\cdot{l_i}$, $\overset\cdot{\tau_i}$, $\overset\cdot{\la_i}$, $\overset\cdot{T_i}$, and so on for the differential coefficients of $l_i$, $\tau_i$, $\la_i$, $T_i$, and so on at $t=0$, respectively. \par
First, we compute the values $z(e)$ for $1$-cells $e$ inside the pair of pants $P_j$. 
Now let $\la(t) > 0$ be a real valued differentiable function defined on  $|t|\ll 1$. Then we have 
\begin{equation}\label{eq:lat}
\aligned
& \bD(\la(t))\inv \bD(\la(t))' = \bD(\la(t))' \bD(\la(t))\inv
= \twomatrix{\la'(t)}{0}{0}{-\la(t)^{-2}\la'(t)}\twomatrix{\la(t)\inv}{0}{0}{\la(t)}\\
&= \twomatrix{\la'(t)\la(t)\inv}{0}{0}{-\la'(t)\la(t)\inv} %\\&
= (\log\la(t))'\twomatrix{1}{0}{0}{-1}.
\endaligned
\end{equation}
{Here, for any $k \in \bZ/3$, we have $\la(t) = \la_k(t)^{1/2}$ and $l_k = 2 \log \la_k$ for the hyperbolic pants $P$.}
Hence we have 
\begin{equation}\label{eq:1lk}
z(e^1_{(j)k0}) = z(e^1_{(j)k1}) = \frac14\twomatrix{1}{0}{0}{-1} dl_{(j)k},
\end{equation}
where $l_{(j)k}$ is the geodesic length function of 
the boundary component $\pa_kP_j$. By \eqref{eq:invz} we have 
\begin{equation}\label{eq:dliinv}
\Ad(\rho(e^1_{(j)k\epsilon}))z(e^1_{(k\epsilon}) = 
 -z((e^1_{(j)k\epsilon})\inv) = \frac14\twomatrix{1}{0}{0}{-1} dl_k
\end{equation}
for $\epsilon = 0,1$. 
\par
As was proved in Theorem \ref{thm:A_k} {for $e^1_k = e^1_{(j)k}$}, we have
$$
\rho(e^1_{(j)k}) = A_k = \pm\twomatrix{\sqrt{|b_kc_k|-1}}{|b_kc_k|^{1/2}}{{-}|b_kc_k|^{1/2}}{-\sqrt{|b_kc_k|-1}},
$$
where $|b_kc_k|$ is given by substituting $l_{\hat k} = l_{(j)\hat k}$ into \eqref{eq:bkck}. 
\begin{lem}\label{lem:seamBz}
$$
\aligned 
& z(e^1_{(j)k}) = \frac12\sqrt{|b_kc_k|(|b_kc_k|-1)\inv}\twomatrix{0}{1}{1}{0}d\log|b_kc_k|,\\
& \Ad(\rho(e^1_{(j)k}))z(e^1_{(j)k}) 
= -\frac12\sqrt{|b_kc_k|(|b_kc_k|-1)\inv}\twomatrix{0}{1}{1}{0}d\log|b_kc_k|.
\endaligned
$$
\end{lem}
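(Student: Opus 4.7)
The plan is to compute $z(e^1_{(j)k}) = \dot A_k A_k\inv$ by a direct matrix calculation from the explicit formula \eqref{eq:A_kA_k}. Abbreviating $u_k := |b_kc_k|^{1/2}$ and $v_k := \sqrt{|b_kc_k|-1}$, one has $A_k = \pm\twomatrix{v_k}{-u_k}{u_k}{-v_k}$ with the algebraic identity $u_k^2 - v_k^2 = 1$. Differentiating the latter yields the key relation $u_k\,du_k = v_k\,dv_k$, which will be used repeatedly. Since $\det A_k = 1$, the inverse is $A_k\inv = \pm\twomatrix{-v_k}{u_k}{-u_k}{v_k}$, and multiplying $\dot A_k A_k\inv$ out gives diagonal entries proportional to $u_k\,du_k - v_k\,dv_k = 0$ and off-diagonal entries both equal to $u_k\,dv_k - v_k\,du_k = du_k/v_k$ (using $dv_k = u_k\,du_k/v_k$). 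Converting $du_k = d|b_kc_k|/(2u_k)$ into logarithmic form via $du_k/v_k = (u_k/(2v_k))\,d\log|b_kc_k|$ produces the stated scalar coefficient $\tfrac12\sqrt{|b_kc_k|/(|b_kc_k|-1)}$ in front of $J := \twomatrix{0}{1}{1}{0}$. The smooth dependence on the Teichm\"uller parameters enters only through $|b_kc_k|$, which is an explicit function of $l_0,l_1,l_2$ by \eqref{eq:bkck}, so the chain rule handles the rest.

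For the second identity, I would exploit the relation $A_k^2 = \pm I$ established in Theorem \ref{thm:A_k}, which forces $\Ad(A_k)$ to be an involution of $\sltwo$. Since $z(e^1_{(j)k})$ is, by the first part, a scalar multiple of $J$, applying $\Ad(\rho(e^1_{(j)k}))$ reduces to a single two-line computation of $A_k J A_k\inv$; combined with the first formula this immediately delivers the second.

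I expect the calculation itself to be short; the main issue will be bookkeeping — tracking the choice of lift $A_k \in SL_2(\bR)$ consistently when differentiating, and using the sign convention $b_kc_k < 0$ from \eqref{eq:ad_bc} (so that $|b_kc_k| = -b_kc_k$) to avoid spurious sign flips in the simplification of the off-diagonal entries.
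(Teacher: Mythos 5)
Your approach coincides with the paper's own: both differentiate the explicit matrix of Theorem \ref{thm:A_k} and exploit $A_k^2=\pm I$; your $u_k,v_k$ bookkeeping with $u_k^2-v_k^2=1$ and $u_k\,du_k=v_k\,dv_k$ is just a repackaging of the paper's computation of $B'(t)B(t)\inv$ with $f(t)=|b_kc_k|$, and your first-line calculation is correct.

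Two caveats on signs, one of which is a real gap in your plan. First, what your computation (and, in fact, the chain of equalities in the paper's own proof) actually yields is $z(e^1_{(j)k})=+\tfrac12\sqrt{|b_kc_k|(|b_kc_k|-1)\inv}\twomatrix{0}{1}{1}{0}d\log|b_kc_k|$, i.e.\ without the minus sign appearing in the statement; so you cannot simply report that the coefficient as stated comes out --- the discrepancy lies in the paper, but it should be flagged rather than silently absorbed. Second, and more substantively, your plan for the second identity does not deliver it in the stated form: carrying out the two-line computation you propose gives $A_k\twomatrix{0}{1}{1}{0}A_k\inv=\twomatrix{0}{-1}{-1}{0}$ (immediate from $u_k^2-v_k^2=1$), hence $\Ad(\rho(e^1_{(j)k}))z(e^1_{(j)k})=-z(e^1_{(j)k})$; thus the two displayed lines of the lemma differ by a sign and cannot both hold verbatim, contrary to your claim that the involution property immediately delivers the second from the first. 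The paper's proof of this half contains the corresponding slip: it asserts $B(t)B'(t)B(t)\inv B(t)\inv=B(t)B'(t)$, which tacitly uses $B(t)^{-2}=I$, false since $B(t)^2=-I$. None of this matters downstream --- Corollary \ref{cor:orthoB} and the hexagon computation in \S4 only use that both elements lie in the span of $\twomatrix{0}{1}{1}{0}$ --- but your write-up should record the second identity with the corrected relative sign rather than asserting it follows immediately as stated.
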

\begin{proof} Let $f(t) > 1$ be a real-valued differentiable function defined on  $|t|\ll 1$, and consider $B(t) =\twomatrix{(f(t)-1)^{1/2}}{f(t)^{1/2}}{{-}f(t)^{1/2}}{-(f(t)-1)^{1/2}}$. Then we have $B(t)^2 = -I$, and 
$$
\aligned
&B'(t)B(t)\inv = - B'(t)B(t)\\
& = {+}\frac12f'(t)\twomatrix{(f(t)-1)^{-1/2}}{{+}f(t)^{-1/2}}{{-}f(t)^{-1/2}}{-(f(t)-1)^{-1/2}}\twomatrix{(f(t)-1)^{1/2}}{f(t)^{1/2}}{{-}f(t)^{1/2}}{-(f(t)-1)^{1/2}}\\
%&= -\frac12f'(t)\twomatrix{0}{f(t)^{-1/2}(f(t)-1)^{1/2} - f(t)^{1/2}(f(t)-1)^{-1/2}}{f(t)^{-1/2}(f(t)-1)^{1/2} - f(t)^{1/2}(f(t)-1)^{-1/2}}{0}\\
&= -\frac12f'(t)(f(t)^{-1/2}(f(t)-1)^{1/2} - f(t)^{1/2}(f(t)-1)^{-1/2})
\twomatrix{0}{1}{1}{0}\\
&= -\frac12f'(t)f(t)^{-1/2}(f(t)-1)^{-1/2}(f(t)-1 - f(t))
\twomatrix{0}{1}{1}{0}\\
&= \frac12f'(t)f(t)^{-1/2}(f(t)-1)^{-1/2}
\twomatrix{0}{1}{1}{0}\\
&= \frac12f(t)^{1/2}(f(t)-1)^{-1/2}
\twomatrix{0}{1}{1}{0} (\log(f(t)))'.
\endaligned
$$
On the other hand, $\Ad(B(t))(B'(t)B(t)\inv) = B(t)B'(t)B(t)\inv B(t)\inv
= {-}B(t)B'(t) = B'(t)B(t) = {-}B'(t)B(t)\inv$. This proves the lemma. 
\end{proof}

\par
Next we compute $z(e^1_{((i))\epsilon})$, $\epsilon = 0,1$, 
which crosses the simple closed curve $\delta_i$, $1 \leq i \leq 3g-3$. As was shown in \S2, we have $\rho(e^1_{((i))\epsilon}) = \bD(\exp(\tau_i/2))J$, where $J = 
\pm\twomatrix{0}{-1}{1}{0}$. Hence, {if we denote by 
$\bD(\exp(\tau_i/2))^\cdot$ the first variation of $\bD(\exp(\tau_i/2))$ 
along the tangent vector $\overset\cdot{\rho}$}, we obtain
$$
z(e^1_{((i))\epsilon})(\overset\cdot{\rho}) = \bD(\exp(\tau_i/2))^\cdot JJ\inv \bD(\exp(-\tau_i/2))
= \frac12\twomatrix{1}{0}{0}{-1}(d\tau_i)(\overset\cdot{\rho}),
$$
i.e., 
\begin{equation}\label{eq:dtaui}
z(e^1_{((i))\epsilon}) = \frac12\twomatrix{1}{0}{0}{-1}d\tau_i.
\end{equation}

\section{The Weil-Petersson symplectic form}

In this section we prove Wolpert's formula \eqref{eq:Wformula} on the Weil-Petersson symplectic form $\WP$ by using the standard cocycle introduced in the previous sections. By Goldman \cite{G1, G2} {(see also Proposition \ref{prop:WPABG})}, the symplectic form $\WP$ equals {twice} the intersection form on the surface $\Sg$ involved with the invariant non-degenerate symmetric form $\Btr(X, Y) = \trace(XY)$, 
$X, Y \in \sltwo$, on the Lie algebra $\sltwo$. 
If $X = \twomatrix{a}{b}{c}{-a}$  and $Y = \twomatrix{p}{q}{r}{-p}$, then we have 
$$
\Btr\left(\twomatrix{a}{b}{c}{-a}, \twomatrix{p}{q}{r}{-p}\right) = 
\trace\left(\twomatrix{a}{b}{c}{-a}\twomatrix{p}{q}{r}{-p}\right) = 2ap + br+cq.
$$
As a corollary of Lemma \ref{lem:seamBz}, we have 
\begin{cor}\label{cor:orthoB} $z(e^1_{(j)k})$ and $\Ad(\rho(e^1_{(j)k}))z(e^1_{(j)k})$ are orthogonal to 
$\twomatrix{1}{0}{0}{-1}$ with respect to the symmetric form $\Btr$. 
\end{cor}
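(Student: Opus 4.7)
The plan is very short: the corollary is essentially an immediate algebraic consequence of Lemma \ref{lem:seamBz} together with the explicit formula for $B$.

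First I would recall that, by Lemma \ref{lem:seamBz}, both $z(e^1_{(j)k})$ and $\Ad(\rho(e^1_{(j)k}))z(e^1_{(j)k})$ are equal to the same scalar-valued $1$-form times the matrix $\twomatrix{0}{1}{1}{0}$. So it suffices to verify the single numerical identity
\[
B\!\left(\twomatrix{1}{0}{0}{-1},\; \twomatrix{0}{1}{1}{0}\right) = 0.
\]
Using the formula $B(X,Y) = 2ap + br + cq$ given just before the corollary with $(a,b,c) = (1,0,0)$ and $(p,q,r) = (0,1,1)$, every term vanishes: $2\cdot 1\cdot 0 + 0\cdot 1 + 0\cdot 1 = 0$. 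Equivalently, the product $\twomatrix{1}{0}{0}{-1}\twomatrix{0}{1}{1}{0} = \twomatrix{0}{1}{-1}{0}$ is traceless.

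Finally, I would conclude by pulling out the scalar factor $-\tfrac12\sqrt{|b_kc_k|(|b_kc_k|-1)^{-1}}\,d\log|b_kc_k|$ from the bilinear pairing to get zero in both cases. There is no real obstacle; this is a one-line check, and its role is purely preparatory for the cup-product computation of $\WP$ in \S4, where it will ensure that the $d\tau_i$-contributions (which carry the matrix $\twomatrix{1}{0}{0}{-1}$, cf.\ \eqref{eq:dtaui} and \eqref{eq:1lk}) pair nontrivially only with the off-diagonal seam contributions supplied by the neighboring cells.
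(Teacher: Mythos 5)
Your proof is correct and follows exactly the paper's route: the paper states this as an immediate corollary of Lemma \ref{lem:seamBz}, since both elements are scalar multiples of $\twomatrix{0}{1}{1}{0}$, and your verification that $B\left(\twomatrix{1}{0}{0}{-1}, \twomatrix{0}{1}{1}{0}\right)=0$ is the (trivial) computation the paper leaves implicit.
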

\par

{Hence} what we like to compute is 
\begin{equation}\label{eq:WP}
\frac12\WP(\overset\cdot{\rho'}, \overset\cdot{\rho''})
= \left\langle {\Btr}_*\left(z(-)(\overset\cdot{\rho'})\times  z(-)(\overset\cdot{\rho''})\right), \widetilde{\Delta}([\Sg])\right\rangle
\end{equation}
for any tangent vectors $\overset\cdot{\rho'}, \overset\cdot{\rho''} \in T_{[\rho]} T_g$. The left-hand side of the pairing is a $2$-cocycle with trivial coefficients, 
and the right-hand side $\widetilde{\Delta}([\Sg])$ the $2$-cycle with trivial coefficients induced by a cellular approximation $\widetilde{\Delta}$ of the diagonal map $\Delta: \Sg \to \Sg\times \Sg$. {(See, e.g., \cite{May} Chapter 10, \S4, pp.74-75 and the last paragraph of Chapter 18, \S3, p.138.)}
We will give an explicit cellular approximation $\widetilde{\Delta}$ on each of the squares and the hexagons in the decomposition $\Sigma_{\cP}$, and compute the pairing using the approximations. As will be shown, the contribution of the annuls neighborhood of $\delta_i$ equals $d\tau_i \wedge d l_i$. \par
In the cell complex $\Sigma_{\cP}$, the closure $\overline{E}$ of each $2$-cell (= face) $E$ is a closed disk whose boundary is a $1$-dimensional subcomplex. 
Suppose the cross product $e'\times e''$ of some $1$-cells $(e', \varphi')$ and $(e'', \varphi'')$ in the boundary of $\overline{E}$ appears in the $2$-chain $\widetilde{\Delta}(E)$. Take a $0$-cell $*$ in the closure $\overline{E}$, and 
paths $\ell'$ (resp.\ $\ell''$) running from $\varphi'(0)$ (resp.\ $\varphi''(0)$) to $*$ inside $\overline{E}$. Then, for any $1$-cocycles $z', z'' \in Z^1(X; \sltwo_{\Ad\rho})$, we have 
\begin{equation}\label{eq:evalB}
\langle {\Btr}_*(z'\times z''), e'\times e''\rangle = \Btr\bigl((\Ad\rho)(\ell')(z'(e')), (\Ad\rho)(\ell'')(z'(e''))\bigr),
\end{equation}
which is independent of the choice of $*$, $\ell'$ and $\ell''$, since 
$\overline{E}$ is contractible and $\Btr$ is $\PSL$-invariant. 
\par

Next we explicitly construct a celluar approximation $\widetilde{\Delta}$ of the diagonal map $\Delta: X \to X\times X$ with respect to the cell decomposition $X = \Sigma_\cP = \Sg$.
In the cell decomposition, the characteristic map for each cell is 
a homeomorphism onto a subcomplex of $X$. 
Hence any cellular map $\widetilde{\Delta}: X \to X\times X$, 
which sarisfies the condition 
\begin{equation}\label{eq:acyclic}
\widetilde{\Delta}(\overline{e}) \subset \overline{e}\times\overline{e} \,\,(\subset X\times X)
\end{equation}
for each cell $e$, is homotopic to the diagonal map $\Delta$.\par
For any $0$-cell $x_0 \in X$ we have to define 
%\begin{equation}\label{eq:0cell}
$$
\widetilde{\Delta}(x_0) = (x_0, x_0)
$$
%\end{equation}
by the condition \eqref{eq:acyclic}. 
For any $1$-cell $e$ whose characteristic map is $\varphi: [0, 1] \to X$, we define 
\begin{equation}\label{eq:1cell}
\widetilde{\Delta}(\varphi(t)) := \begin{cases}
(\varphi(2t), \varphi(0)), & \text{if $0 \leq t \leq 1/2$},\\
(\varphi(1), \varphi(2t-1)), & \text{if $1/2 \leq t \leq 1$}.\\
\end{cases}
\end{equation}
Here we remark that this definition depends on the orientation of the cell $e$. In order to make our calculation for the squares simpler, we choose $(e^1_{(j)k1})\inv$ instead of $e^1_{(j)k1}$ for any $1 \leq j \leq 3g-3$ and $k \in \bZ/3$. 
\par
We have two kinds of $2$-cells $E$: squares and hexagons. 
Here it should be remarked the approximation was already determined  on the boundary $\pa E$ based on the orientation of each $1$-cell appearing there. \par
For each square $E =: E_S$, we take an approximation $\widetilde{\Delta}$ as in Figure {5}. Here $- g_1\times f_1$ means that the orientation of $g_1\times f_1$ is converse to that of the square $E_S$. 
\begin{figure}
\begin{center}
\begin{tikzpicture}
\tikzset{Process/.style={rectangle,  draw,  text centered, text width=37mm, minimum height=1.5cm}};
\coordinate (00) at (0,1) node at (00) [below left] {$p_{00}$};
\coordinate (10) at (2,1) node at (10) [below right] {$p_{10}$};
\coordinate (01) at (0,3) node at (01) [above left] {$p_{01}$};
\coordinate (11) at (2,3) node at (11) [above right] {$p_{11}$};
\fill (00) circle (2pt) (10) circle (2pt) (01) circle (2pt) (11) circle (2pt) ;
\draw[thick] (00) -- (10) node [midway] {$>$};
\draw[thick] (01) -- (11) node [midway] {$>$};
\draw[thick] (00) -- (01) node [midway] {$\wedge$};
\draw[thick] (10) -- (11) node [midway] {$\wedge$};
\node (*) at (1,2) {$E_S$};
\node () at (1,0.7) {$f_0$};
\node () at (1,3.3) {$f_1$};
\node () at (-0.3,2) {$g_0$};
\node () at (2.3,2) {$g_1$};
\coordinate (0000) at (6,0) node at (0000) [below left] {\tiny $(p_{00}, p_{00})$};
\coordinate (0100) at (6,2) node at (0100) [left] {\tiny $(p_{01}, p_{00})$};
\coordinate (0101) at (6,4) node at (0101) [above left] {\tiny $(p_{01}, p_{01})$};;
\coordinate (1000) at (8,0) node at (1000) [below] {\tiny $(p_{10}, p_{00})$};
\coordinate (1100) at (8,2) node at (1100) [above right] {\tiny $(p_{11}, p_{00})$};
\coordinate (1101) at (8,4) node at (1101) [above] {\tiny $(p_{11}, p_{01})$};
\coordinate (1010) at (10,0) node at (1010) [below right] {\tiny $(p_{10}, p_{10})$};
\coordinate (1110) at (10,2) node at (1110) [right] {\tiny $(p_{11}, p_{10})$};
\coordinate (1111) at (10,4) node at (1111) [above right] {\tiny $(p_{11}, p_{11})$};;
\fill (0000) circle (2pt) (0100) circle (2pt) (0101) circle (2pt) (1000) circle (2pt) (1100) circle (2pt) (1101) circle (2pt) (1010) circle (2pt) (1110) circle (2pt) (1111) circle (2pt);
\draw[thick] (0000) -- (0100);
\draw[thick] (0100) -- (0101);
\draw[thick] (0000) -- (1000);
\draw[thick] (0100) -- (1100);
\draw[thick] (0101) -- (1101);
\draw[thick] (1000) -- (1100);
\draw[thick] (1100) -- (1101);
\draw[thick] (1000) -- (1010);
\draw[thick] (1100) -- (1110);
\draw[thick] (1101) -- (1111);
\draw[thick] (1010) -- (1110);
\draw[thick] (1110) -- (1111);
\node () at (7,1) {$E_S\times p_{0,0}$};
\node () at (9,1) {$-g_1\times f_0$};
\node () at (7,3) {$f_1\times g_0$};
\node () at (9,3) {$p_{1,1} \times E_S$};
\end{tikzpicture}\par\medskip\noindent
\end{center}
\label{fig:Ann}
\caption{Celluar approximation of the diagonal map on the square $E_S$}
\end{figure}
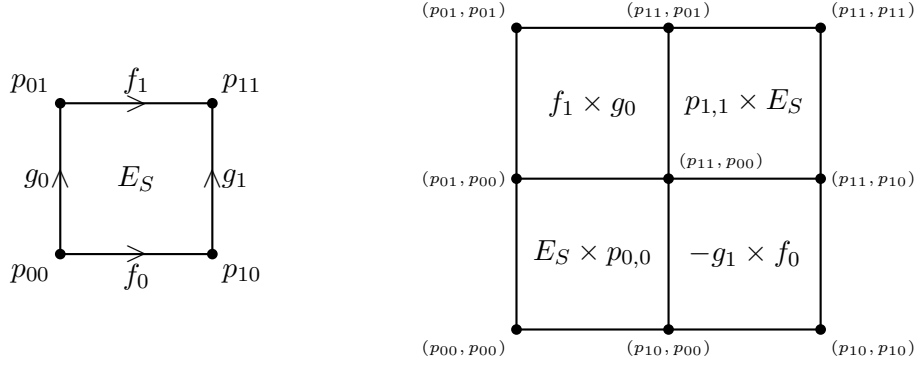
This means
$$
\widetilde{\Delta}(E_S) = f_1\times g_0 + p_{1,1}\times E_S + E_S\times p_{0,0}
{- g_1\times f_0} \in C_*(X\times X) = C_*(X)\otimes C_*(X).
$$

\begin{lem}\label{lem:icont}
The contribution of the $2$-cells $e^2_{((i))0}$ and $e^2_{((i))1}$ 
to the Weil-Petersson symplectic form $\WP$ is
$d\tau_i\wedge dl_i$.
\end{lem}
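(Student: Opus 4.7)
My plan is to directly evaluate the right-hand side of \eqref{eq:WP} restricted to the two $2$-cells $e^2_{((i))0}$ and $e^2_{((i))1}$, using the diagonal approximation from Figure 4. For each square $E_S$, I expand $\widetilde{\Delta}(E_S)$ into the five pieces given by the formula. The two summands $p_{11}\times E_S$ and $E_S\times p_{00}$ pair trivially with $B_*(z\times z)$, since they involve a $0$-cell while $z$ is a $1$-cochain. What remains are the three pieces $f_1\times g_0$, $g_1\times f_0$, and $-g_1\times g_1$, each a product of two $1$-cells, for which I apply \eqref{eq:evalB}: transport both $z$-values to a common basepoint inside $\overline{E_S}$ via the adjoint action of $\rho$, and evaluate $B$.

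The relevant inputs are $z(e^1_{((i))\epsilon})=\tfrac12 H\,d\tau_i$ from \eqref{eq:dtaui} for the horizontal edges, and $z(e^1_{(j)k\epsilon})=\tfrac14 H\,dl_i$ from \eqref{eq:1lk} for the pants-boundary edges, where $H=\twomatrix{1}{0}{0}{-1}$; the reversed edges $(e^1_{(j)k1})\inv$ appearing as the right vertical side contribute $-\tfrac14 H\,dl_i$ via \eqref{eq:invz}. The transport splits into two cases: along a pants-boundary edge the holonomy is $\bD(\la_i^{1/2})$, which commutes with $H$ and acts trivially under $\Ad$; but transport across $\delta_i$ via an edge $e^1_{((i))\epsilon}$ uses $\rho(e^1_{((i))\epsilon})=J\bD(T_i)$, and a short computation gives $\Ad(J)(H)=J H J\inv=-H$. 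Together with $B(H,H)=\mathrm{trace}(H^2)=2$, each $B$-pairing reduces to an explicit scalar multiple of a tensor in $d\tau_i$ and $dl_i$.

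Summing the three contributions for each square and then over $e^2_{((i))0}$ and $e^2_{((i))1}$, the sign flip $\Ad(J)(H)=-H$ produced by transports crossing $\delta_i$ is precisely the mechanism that converts the a priori symmetric-looking $B$-pairings into the antisymmetric form $d\tau_i\wedge dl_i$; this is consistent with the global fact that the only place the variable $d\tau_i$ enters the standard cocycle is on the edges $e^1_{((i))\epsilon}$ inside the annulus neighborhood of $\delta_i$, so the entire $d\tau_i\wedge dl_i$-piece of $\WP$ must localize on these two squares. The main obstacle is careful bookkeeping of signs: (i) the orientation convention that replaces $e^1_{(j)k1}$ by its inverse, which flips the sign of $z(g_1)$; (ii) the sign produced by $\Ad(J)$ during transport across $\delta_i$; and (iii) the contribution of the correction term $-g_1\times g_1$, which must balance the other two terms so that the symmetric pieces cancel between the two squares and only the antisymmetric $d\tau_i\wedge dl_i$ survives.
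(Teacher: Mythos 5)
Your route is the same as the paper's: evaluate Goldman's pairing \eqref{eq:WP} on the two squares through the cellular diagonal approximation, using \eqref{eq:1lk}, \eqref{eq:dtaui} and \eqref{eq:invz}, the flip $\Ad(J)\twomatrix{1}{0}{0}{-1} = -\twomatrix{1}{0}{0}{-1}$ for any transport across $\delta_i$, and $B\left(\twomatrix{1}{0}{0}{-1},\twomatrix{1}{0}{0}{-1}\right)=2$; all of these ingredients, including the sign coming from replacing $e^1_{(j)k1}$ by its inverse, are correctly identified and are exactly what the paper uses. The gap is in the $2$-chain you feed into the pairing. You take the displayed formula $\widetilde{\Delta}(E_S)= f_1\times g_0 + p_{11}\times E_S + E_S\times p_{00} + g_1\times f_0 - g_1\times g_1$ at face value, but that chain is not a cellular approximation of the diagonal: its boundary does not equal $\widetilde{\Delta}_*(\pa E_S)$ as forced by \eqref{eq:1cell} (it produces, e.g., the term $2\,g_1\times p_{00}$ and a term $g_1\times p_{11}$), and it contradicts Figure 4, whose lower-right block is labelled $-g_1\times f_0$. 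The chain consistent with \eqref{eq:1cell} and with the figure is $\widetilde{\Delta}(E_S) = E_S\times p_{00} + p_{11}\times E_S + f_1\times g_0 - g_1\times f_0$; the displayed equation has a sign typo and a spurious $g_1\times g_1$ term, and the paper's own proof works with the corrected version.

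This matters because the balancing you predict does not happen. Running your own recipe on your chain: for $E_S=e^2_{((i))0}$ the three edge--edge terms give $\tfrac14 d\tau_i(\overset\cdot{\rho'})dl_i(\overset\cdot{\rho''}) + \tfrac14 dl_i(\overset\cdot{\rho'})d\tau_i(\overset\cdot{\rho''}) - \tfrac18 dl_i(\overset\cdot{\rho'})dl_i(\overset\cdot{\rho''})$, and for $E_S=e^2_{((i))1}$ they give $-\tfrac14 d\tau_i(\overset\cdot{\rho'})dl_i(\overset\cdot{\rho''}) - \tfrac14 dl_i(\overset\cdot{\rho'})d\tau_i(\overset\cdot{\rho''}) - \tfrac12 d\tau_i(\overset\cdot{\rho'})d\tau_i(\overset\cdot{\rho''})$; the mixed terms cancel completely (antisymmetric part included), leaving only the symmetric junk $-\tfrac18\, dl_i\otimes dl_i - \tfrac12\, d\tau_i\otimes d\tau_i$, not $d\tau_i\wedge dl_i$. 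With the corrected chain no cancellation between the two squares is needed: each square separately contributes $\tfrac14\bigl(d\tau_i(\overset\cdot{\rho'})dl_i(\overset\cdot{\rho''}) - dl_i(\overset\cdot{\rho'})d\tau_i(\overset\cdot{\rho''})\bigr)$, the sign flip $\Ad(J)$ occurring in the $-g_1\times f_0$ term for $e^2_{((i))0}$ (transport of $z(f_0)$ with $f_0=e^1_{((i))0}$) and in the $f_1\times g_0$ term for $e^2_{((i))1}$ (transport of $z(g_0)$ with $g_0=e^1_{((i))0}$); summing the two squares then yields \eqref{eq:icont} and hence the lemma. So your proposal needs this one correction — identify and use the valid approximation of Figure 4 rather than the displayed formula — after which it coincides with the paper's argument.
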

\begin{proof}
The terms $p_{1,1}\times E_S$ and $E_S\times p_{0,0}$ have no contribution to the cup product. In order to compute the other terms, 
we use \eqref{eq:1lk} and \eqref{eq:dtaui}. Recall 
$\Btr\left(\twomatrix{1}{0}{0}{-1}\twomatrix{1}{0}{0}{-1}\right) = 2$. \par
For $E_S = e^2_{((i))0}$, we take $f_0 = e^1_{((i))0}$, $f_1 = e^1_{((i))1}$, 
$g_0 = e^1_{(j')k'0}$ and $g_1 = (e^1_{(j'')k''1})\inv$. 
When we compute $-g_1\times f_0$, from \eqref{eq:evalB}, we have to move $z(f_0)$ along $f_0$, which changes the sign. Hence we obtain $-\frac14dl_i(\overset\cdot{\rho'})d\tau_i(\overset\cdot{\rho''})$.
On the other hand, for $f_1\times g_0$, we move $z(g_0)$ along 
$g_0$, which does not change the sign. Hence we obtain 
$+\frac14d\tau_i(\overset\cdot{\rho'})dl_i(\overset\cdot{\rho''})$.\par
For $E_S = e^2_{((i))1}$, we take $f_0 = (e^1_{(j')k'1})\inv$, $f_1 = e^1_{(j'')k''0}$, $g_0 = e^1_{((i))0}$ and $g_1 = e^1_{((i))1}$.
Then we get $- \frac14dl_i(\overset\cdot{\rho'})d\tau_i(\overset\cdot{\rho''})
+ \frac14d\tau_i(\overset\cdot{\rho'})dl_i(\overset\cdot{\rho''})$.
Here the sign for $f_1\times g_0$ is changed since we need to move $z(g_0)$ along $g_0$. \par
Hence the contribution of the $2$-cells $e^2_{((i))0}$ and $e^2_{((i))1}$
{to $\frac12\WP$}  is
\begin{equation}\label{eq:icont}
\frac12(d\tau_i(\overset\cdot{\rho'})dl_i(\overset\cdot{\rho''})
- dl_i(\overset\cdot{\rho'})d\tau_i(\overset\cdot{\rho''}))
= {\frac12}(d\tau_i\wedge dl_i)(\overset\cdot{\rho'}, \overset\cdot{\rho''}).
\end{equation}
This proves the lemma.
\end{proof}

Now we consider a hexagon $E =: E_H$ whose vertices are the sixth roots of unity $\{1, -\omega^2, \omega, -1, \omega^2, -\omega\}$ in the complex line $\bC$. 
The orientation of each $1$-cell goes from the left to the right, and 
we define the same cellular approximation $\widetilde{\Delta}$
on the $0$- and $1$-cells as above. Then we can extend the map $\widetilde{\Delta}$ to the interior of $E_H$ as in Figure {6}. {In particular, the extended map on each rhombus identified with $[0,1]\times [0,1]$ is the product map of the characteristic maps of the corresponding $1$-cells. For example, $f_3\times f_1$ is a composite map of the product map $[0,1]\times [0,1] \to X\times X$, $(t_1, t_2) \mapsto (f_3(t_1), f_1(t_2))$, and a homeomorphism from the rhombus onto $[0,1]\times [0,1]$ which maps 
$(-\omega^2,-1)$, $(1,-1)$, $(-\omega^2,\omega)$ and $(1, \omega)$ to 
$(0,0)$, $(1,0)$, $(0,1)$ and $(1,1)$, respectively.  
The sign tells us if the product map is orientation-preserving or not.}
\begin{figure}[ht]
\begin{center}
\begin{tikzpicture}
\tikzset{Process/.style={rectangle,  draw,  text centered, text width=37mm, minimum height=1.5cm}};
\coordinate (+o2) at (1,1.732) 
node at (+o2) [below left] {$\omega^2$};
\coordinate (-o) at (3,01.732)
node at (-o) [below right] {$-\omega$};
\coordinate (-1) at (0,3.464)
node at (-1) [left] {$-1$};
\coordinate (+1) at (4,3.464)
node at (+1) [right] {$+1$};
\coordinate (+o) at (1,5.206)
node at (+o) [above left] {$\omega$};
\coordinate (-o2) at (3,5.206)
node at (-o2) [above right] {$-\omega^2$};
\fill (+o2) circle (2pt) (-o) circle (2pt)
(-1) circle (2pt) (+1) circle (2pt)
(+o) circle (2pt) (-o2) circle (2pt);
\draw[thick] (-1) -- (+o) node [midway] [sloped] {$>$}
node [midway] [above left] {$f_1$};
\draw[thick] (-1) -- (+o2) node [midway] [sloped] {$>$}
node [midway] [below left] {$g_1$};
\draw[thick] (+o) -- (-o2) node [midway] [sloped] {$>$}
node [midway] [above] {$f_2$};
\draw[thick] (+o2) -- (-o) node [midway] [sloped] {$>$}
node [midway] [below] {$g_2$};
\draw[thick] (-o2) -- (+1) node [midway] [sloped] {$>$}
node [midway] [above right] {$f_3$};
\draw[thick] (-o) -- (+1) node [midway] [sloped] {$>$}
node [midway] [below right] {$g_3$};
\node () at (2,3.464) {$E_H$};
\coordinate (+o2+o2) at (8,0)
node at (+o2+o2) [below left] {\tiny $(\omega^2,\omega^2)$};
\coordinate (-o+o2) at (10,0)
node at (-o+o2) [below] {\tiny $(-\omega,\omega^2)$};
\coordinate (-o-o) at (12,0)
node at (-o-o) [below right] {\tiny $(-\omega,-\omega)$};
\coordinate (+o2-1) at (7,1.732)
node at (+o2-1) [left] {\tiny $(\omega^2,-1)$};
\coordinate (-o-1) at (9,1.732)
node at (-o-1) [above left] {\tiny $(-\omega,-1)$};
\coordinate (+1+o2) at (11,1.732)
node at (+1+o2) [above right] {\tiny $(1,\omega^2)$};
\coordinate (+1-o) at (13,1.732)
node at (+1-o) [right] {\tiny $(1,-\omega)$};
\coordinate (-1-1) at (6,3.464)
node at (-1-1) [right] {\tiny $(-1,-1)$};
\coordinate (+1-1) at (10,3.464)
node at (+1-1) [right] {\tiny $(1,-1)$};
\coordinate (+1+1) at (14,3.464)
node at (+1+1) [right] {\tiny $(1,1)$};
\coordinate (+o-1) at (7,5.206)
node at (+o-1) [left] {\tiny $(\omega,-1)$};
\coordinate (-o2-1) at (9,5.206)
node at (-o2-1) [below left] {\tiny $(-\omega^2,-1)$};
\coordinate (+1+o) at (11,5.206)
node at (+1+o) [below right] {\tiny $(1,\omega)$};
\coordinate (+1-o2) at (13,5.206)
node at (+1-o2) [right] {\tiny $(1,-\omega^2)$};
\coordinate (+o+o) at (8,6.928)
node at (+o+o) [above left] {\tiny $(\omega,\omega)$};
\coordinate (-o2+o) at (10,6.928)
node at (-o2+o) [above] {\tiny $(-\omega^2,\omega)$};
\coordinate (-o2-o2) at (12,6.928)
node at (-o2-o2) [above right] {\tiny $(-\omega^2,-\omega^2)$};
\fill (+o2+o2) circle (2pt) (-o+o2) circle (2pt) (-o-o) circle (2pt)
(+o2-1) circle (2pt) (-o-1) circle (2pt) 
(+1+o2) circle (2pt) (+1-o) circle (2pt) 
(-1-1) circle (2pt) (+1-1) circle (2pt) (+1+1) circle (2pt)
(+o-1) circle (2pt) (-o2-1) circle (2pt) 
(+1+o) circle (2pt) (+1-o2) circle (2pt) 
(+o+o) circle (2pt) (-o2+o) circle (2pt) (-o2-o2) circle (2pt);
\draw[thick] (+o2+o2) -- (-o-o) 
(+o2-1) -- (+o2+o2) (-o-1) -- (-o+o2) (+1+o2) -- (-o+o2) (+1-o) -- (-o-o)
(+o2-1) -- (-o-1) (+1+o2) -- (+1-o)
(-1-1) -- (+o2-1) (-o-1) -- (+1-1) (+1-1) -- (+1+o2) (+1-o) -- (+1+1)
(-1-1) -- (+o-1) (-o2-1) -- (+1-1) (+1-1) -- (+1+o) (+1-o2) -- (+1+1)
(+o-1) -- (-o2-1) (+1+o) -- (+1-o2)
(+o-1) -- (+o+o) (-o2-1) -- (-o2+o) (+1+o) -- (-o2+o) (+1-o2) -- (-o2-o2)
(+o+o) -- (-o2-o2) ;
\node () at (8.5, 0.866) {$-g_2\times g_1$};
\node () at (11.5, 0.866) {$-g_3\times g_2$};
\node () at (10.0, 1.732) {$-g_3\times g_1$};
\node () at (8.2, 3.464) {$E_H\times\{-1\}$};
\node () at (12.0, 3.464) {$\{1\}\times E_H$};
\node () at (10.0, 5.206) {$f_3\times f_1$};
\node () at (8.5, 6.072) {$f_2\times f_1$};
\node () at (11.5,6.072) {$f_3\times f_2$};
\end{tikzpicture}
\end{center}
\caption{{C}ellular approximation of the diagonal map on the hexagon $E_H$}
\end{figure}
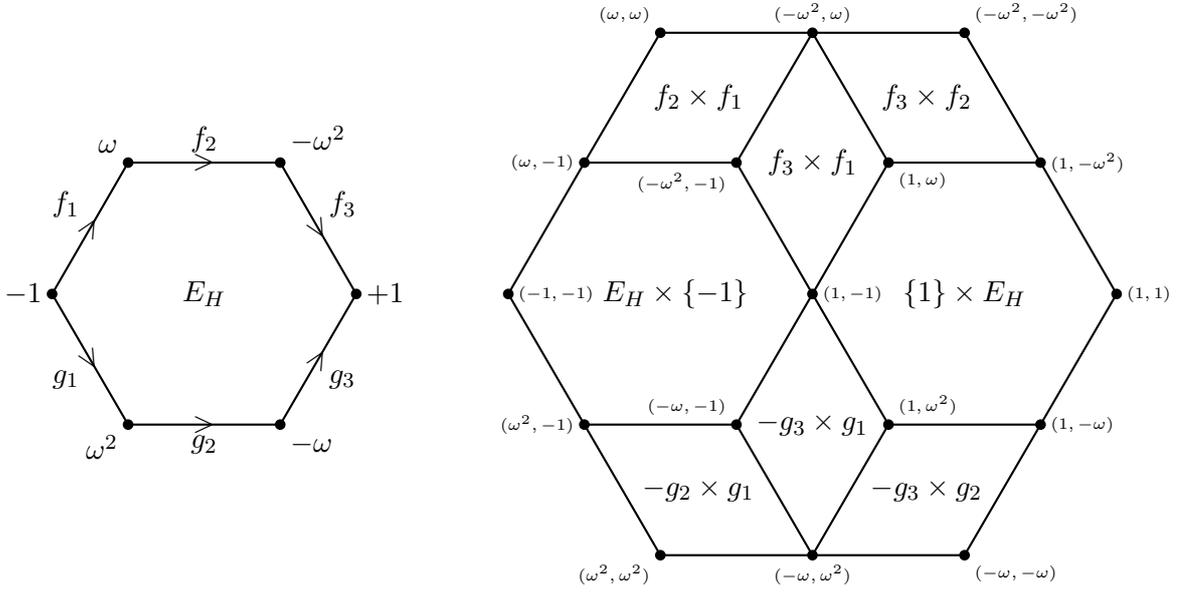
In other words, we have 
\begin{equation}\label{eq:hexagon}
\widetilde{\Delta}(E_H) = 
\{1\}\times E_H + E_H\times \{-1\} 
+ f_3\times f_2 + f_3\times f_1 + f_2\times f_1
- g_3\times g_2 - g_3\times g_1 - g_2\times g_1
\end{equation}
in the celluar chain complex $C_*(\overline{E_H}\times \overline{E_H}) = C_*(\overline{E_H})\otimes C_*(\overline{E_H})$. 
\par
We compute the contribution of each pair of pants $P = P_j$, $1 \leq j \leq 2g-2$, to the Weil-Petersson symplectic form $\WP$ in \eqref{eq:WP}. In what follows, we drop the subscript $j$.
In order to make the $2$-cells $e^2_{\pm}$ fit the hexagon $E_H$, 
we reverse the orientations of $e^1_0$ and $e^1_{10}$, and 
put $e^1_{10}$ back to the original orientation. 
In order to compute the effect coming from the change of the orientations of $e^1_{10}$ and $e^1_{11}$, we insert two bigons 
along the boundary component $\pa_1P$ as in Figure {7}.\par
\begin{figure}[ht]
\begin{center}
\begin{tikzpicture}
\tikzset{Process/.style={rectangle,  draw,  text centered, text width=37mm, minimum height=1.5cm}};
\coordinate (010) at (0,2) node at (010) [left] {$e^0_{10}$};
\coordinate (011) at (4,2) node at (011) [right] {$e^0_{11}$};
\fill (010) circle (2pt) (011) circle (2pt);
\coordinate (ddl) at (0,0); 
\coordinate (dl) at (0,1); 
\coordinate (ul) at (0,3); 
\coordinate (uul) at (0,4); 
\coordinate (ddr) at (4,0); 
\coordinate (dr) at (4,1); 
\coordinate (ur) at (4,3); 
\coordinate (uur) at (4,4); 
\draw[thick, rounded corners=20pt] (010) -- (ul) -- (ur) 
node [midway] [sloped] {$>$} node [midway] [below] {$e^1_{10}$} 
-- (011)  ;
\draw[rounded corners=20pt] (010) -- (uul) -- (uur) 
 node [midway] [above] {$(e^1_{10})\inv$} 
-- (011)  ;
\draw[thick, rounded corners=20pt] (010) -- (dl) -- (dr) 
node [midway] [above] {$(e^1_{11})\inv$} -- (011)  ;
\draw[thick, rounded corners=20pt] (010) -- (ddl) -- (ddr) 
node [midway] [below] {$e^1_{11}$}-- (011)  ;
\filldraw[fill = lightgray, thick, rounded corners=20pt] (010) -- (ul) -- (ur) node [midway] [sloped] {$>$} -- (011) -- (uur) -- (uul) node [midway] [sloped] {$<$}-- (010) ;
\filldraw[fill = gray, thick, rounded corners=20pt] (010) -- (dl) -- (dr) node [midway] [sloped] {$>$} -- (011) -- (ddr) -- (ddl) node [midway] [sloped] {$<$}-- (010) ;
\node () at (2,2) {$\pa_1P$};
\coordinate (a0) at (7,3) node at (a0) [above left] {\tiny $(e^0_{10}, e^0_{11})$}; 
\coordinate (b0) at (9,3) node at (b0) [above right] {\tiny $(e^0_{11}, e^0_{11})$}; 
\coordinate (c0) at (7,1) node at (c0) [below left] {\tiny $(e^0_{10}, e^0_{10})$}; 
\coordinate (d0) at (9,1) node at (d0) [below right] {\tiny $(e^0_{11}, e^0_{10})$}; 
\filldraw[fill=lightgray, thick] (a0) -- (b0) node [midway] {$<$} -- (d0) 
node [midway] [sloped] {$<$}
-- (c0) node [midway] {$>$} -- (a0) node [midway] [sloped] {$<$};
\fill (a0) circle (2pt) (b0) circle (2pt) (c0) circle (2pt) (d0) circle (2pt);
\node () at (8,2) {\tiny $e^1_{10}\times e^1_{10}$};
\coordinate (a1) at (12,3) node at (a1) [above left] {\tiny $(e^0_{11}, e^0_{10})$}; 
\coordinate (b1) at (14,3) node at (b1) [above right] {\tiny $(e^0_{11}, e^0_{11})$}; 
\coordinate (c1) at (12,1) node at (c1) [below left] {\tiny $(e^0_{10}, e^0_{10})$}; 
\coordinate (d1) at (14,1) node at (d1) [below right] {\tiny $(e^0_{10}, e^0_{11})$}; 
\filldraw[fill=gray, thick] (a1) -- (b1) node [midway] {$>$} -- (d1) 
node [midway] [sloped] {$>$}
-- (c1) node [midway] {$<$} -- (a1) node [midway] [sloped] {$>$};
\fill (a1) circle (2pt) (b1) circle (2pt) (c1) circle (2pt) (d1) circle (2pt);
\node () at (13,2) {\tiny $-e^1_{11}\times e^1_{11}$};
\end{tikzpicture}
\end{center}
\caption{two bigons inserted along the boundary component $\pa_1P$}
\end{figure}
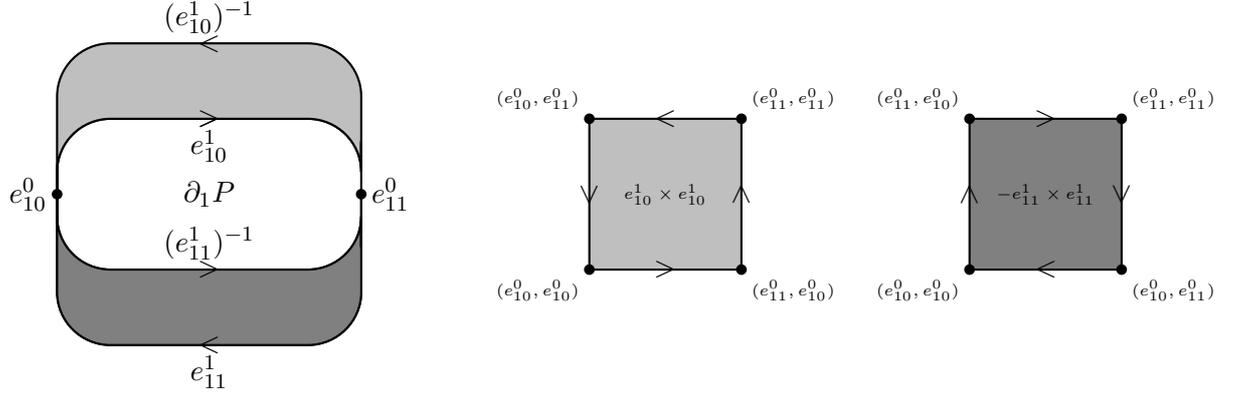
The map $\widetilde{\Delta}$ defined on the boundary of the two bigons as above extends to the interior of the bigons as in Figure {7}. 
Then it maps the lightgray bigon onto $e^1_{10}\times e^1_{10}$ 
in an orientation-preserving way. Hence, by \eqref{eq:1lk}, the contribution of the lightgray bigon is $\dfrac18 dl_1(\overset\cdot{\rho'})dl_1(\overset\cdot{\rho''})$. 
On the other hand,  it maps the gray bigon onto $e^1_{11}\times e^1_{11}$ in an orientation-reversing way. Hence, by \eqref{eq:1lk}, the contribution of the gray bigon is $-\dfrac18 dl_1(\overset\cdot{\rho''})dl_1(\overset\cdot{\rho'})$.
Consequently the contributions cancel each other.\par
\begin{figure}[ht]
\begin{center}
\begin{tikzpicture}[scale=0.75, transform shape]
\tikzset{Process/.style={rectangle,  draw,  text centered, text width=37mm, minimum height=1.5cm}};
\coordinate (A) at (0,6);
\coordinate (B) at (14,6);
\coordinate (C) at (0,0);
\coordinate (D) at (14,0);
\coordinate (p) at (2,4);
\coordinate (q) at (6,4);
\coordinate (r) at (2,2);
\coordinate (s) at (6,2);
\coordinate (x) at (8,4);
\coordinate (y) at (12,4);
\coordinate (z) at (8,2);
\coordinate (w) at (12,2);
\coordinate (001) at (0,3);
\coordinate (010) at (2,3);
\coordinate (011) at (6,3);
\coordinate (020) at (8,3);
\coordinate (021) at (12,3);
\coordinate (000) at (14,3);
\fill (000) circle (2pt) (001) circle (2pt)
(010) circle (2pt) (011) circle (2pt)
(020) circle (2pt) (021) circle (2pt);
\draw[thick] (001) -- (010) node [midway] {$<$}
node [midway] [above] {$e^1_1 = f_{3+}$} 
node [midway] [below] {$e^1_1 = g_{3-}$} ;
\draw[thick] (011) -- (020) node [midway] {$<$}
node [midway] [above] {$e^1_2 = f_{1+}$} 
node [midway] [below] {$e^1_2 = g_{1-}$} ;
\draw[thick] (021) -- (000) node [midway] {$>$}
node [midway] [above] {$(e^1_0)^{-1} = g_{2+}$} 
node [midway] [below] {$(e^1_0)^{-1} = f_{2-}$} ;
\node at (001)  [left] {$e^0_{01}$}; 
\node at (000) [right] {$e^0_{00}$}; 
\node at (011)  [left] {$e^0_{11}$}; 
\node at (010) [right] {$e^0_{10}$}; 
\node at (021)  [left] {$e^0_{21}$}; 
\node at (020) [right] {$e^0_{20}$}; 
\draw[thick, rounded corners=30pt] (001) -- (A) -- (B) node [midway] {$<$} node [midway] [below] {$e^1_{00} = g_{3+}$} -- (000);
\draw[thick, rounded corners=30pt] (001) -- (C) -- (D) node [midway] {$<$} node [midway] [above] {$(e^1_{01})^{-1} = f_{3-}$} -- (000);
\draw[thick, rounded corners=20pt] (010) -- (p) -- (q) node [midway]  {$<$} node [midway] [above] {$(e^1_{10})^{-1} = f_{2+}$} -- (011);
\draw[thick, rounded corners=20pt] (010) -- (r) -- (s) node [midway]  {$<$} node [midway] [below] {$e^1_{11} = g_{2-}$} -- (011);
\draw[thick, rounded corners=20pt] (020) -- (x) -- (y) node [midway] {$>$} node [midway] [above] {$e^1_{20} = g_{1+}$} -- (021);
\draw[thick, rounded corners=20pt] (020) -- (z) -- (w) node [midway] {$>$} node [midway] [below] {$(e^1_{21})^{-1} = f_{1-}$}  -- (021);
\node () at (7,4.5) {$e^2_+$};
\node () at (7,1.5) {$e^2_-$};
\node () at (4,3) {$\partial_1P$};
\node () at (10,3) {$\partial_{{2}}P$};
\node () at (15,2) {$\partial_0P$};
\end{tikzpicture}
\end{center}
\caption{Identification of $e^2_{\pm}$ with the hexagon $E_H$.}
\end{figure}
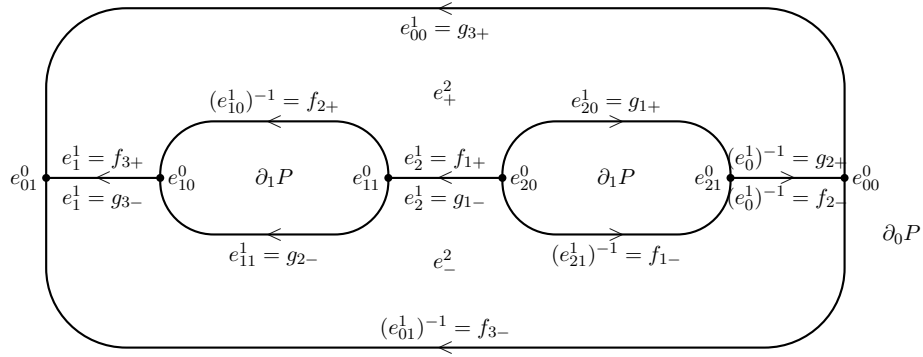

Now we regard the $2$-cells $e^2_+$ and $e^2_-$ as hexagons as in Figure {8}. 
Then we have 
$$
\aligned
&\widetilde{\Delta}([e^2+] + [e^2_-])\\
& = \{e^0_{01}\}\times ([e^2+] + [e^2_-]) 
+ ([e^2+] + [e^2_-])\times \{e^0_{20}\} \\
& \quad 
+ f_{3+}\times f_{2+} + f_{3+}\times f_{1+} + f_{2+}\times f_{1+}
- g_{3+}\times g_{2+} - g_{3+}\times g_{1+} - g_{2+}\times g_{1+}\\
&\quad
+ f_{3-}\times f_{2-} + f_{3-}\times f_{1-} + f_{2-}\times f_{1-}
- g_{3-}\times g_{2-} - g_{3-}\times g_{1-} - g_{2-}\times g_{1-}
\endaligned
$$
The first line of the RHS does not contribute to the Weil-Petersson symplectic form in \eqref{eq:WP}. 
From Corollary \ref{cor:orthoB} and \eqref{eq:evalB}, the contribution of each of 
$f_{3\pm}\times f_{2\pm}$, 
$f_{2\pm}\times f_{1\pm}$, $g_{3\pm}\times g_{2\pm}$ and $g_{2\pm}\times g_{1\pm}$ in \eqref{eq:WP} vanishes. 
Moreover, we have
$$
\aligned
& \left\langle {\Btr}_*\left(z(-)(\overset\cdot{\rho'})\times  z(-)(\overset\cdot{\rho''})\right), g_{3+}\times g_{1+}\right\rangle
%\\& 
= \frac14\Btr\left(\Ad(\rho(e^1_0))\twomatrix{1}{0}{0}{-1}, 
\twomatrix{1}{0}{0}{-1}\right)dl_0(\overset\cdot{\rho'}) dl_2(\overset\cdot{\rho''})
\\
& = \left\langle {\Btr}_*\left(z(-)(\overset\cdot{\rho'})\times  z(-)(\overset\cdot{\rho''})\right), f_{3-}\times f_{1-}\right\rangle.
\endaligned
$$
Hence there remains only $f_{3+}\times f_{1+} - g_{3-}\times g_{1-}$, whose contribution to $\WP$ equals
$$
\Btr(z(e^1_1)(\overset\cdot{\rho'}), (\Ad\bD({\la_1}^{-1/2}) - \Ad\bD({\la_1}^{1/2}))\Ad(\rho(e^1_2))(z(e^1_2)(\overset\cdot{\rho''}))).
$$
Here, from Lemma \ref{lem:seamBz}, $z(e^1_1)(\overset\cdot{\rho'})$ and $z(e^1_2)(\overset\cdot{\rho''})$ are in the linear span of $\twomatrix{0}{1}{1}{0}$. But 
$$
\Ad\bD({\la_1}^{-1/2})\twomatrix{0}{1}{1}{0} - \Ad\bD({\la_1}^{1/2})\twomatrix{0}{1}{1}{0}
= \twomatrix{0}{{\la_1}\inv}{{\la_1}}{0} -  \twomatrix{0}{{\la_1}}{{\la_1}\inv}{0} = (\la_1 - {\la_1}\inv)\twomatrix{0}{-1}{1}{0}
$$
is orthogonal to $\twomatrix{0}{1}{1}{0}$ with respect to $\Btr$.
Consequently the contributions of the pair of pants $P = P_j$ to
the Weil-Petersson symplectic form $\WP$ vanishes.\par
This proves Wolpert's formula \eqref{eq:Wformula}.\qed

\section{Lifts to $SL_2(\bR)$}\label{sec:spin}

Finally we discuss lifts of the standard cocycle for a hyperbolic surface to $SL_2(\bR)$. The first two subsections are devoted to a review on spin hyperbolic surfaces, because the author cannot find an expository reference about relations between the spin and hyperbolic structures which fits to our construction.
In this section we denote
$$
D(\la) := \twomatrix{\la}{0}{0}{\la\inv} \in SL_2(\bR)
$$
for any $\la \in \bR \setminus \{0\}$. 

\subsection{Spin structures on an oriented surface}

Now we recall some basic facts on spin structures on a compact oriented surface together with their proofs. \par
Let $\Sigma$ be a compact oriented surface, and 
denote by $U\Sigma$ the unit tangent bundle of the surface $\Sigma$. It is a principal $S^1$ bundle 
defined by the subset $\{e \in T\Sigma; \,\, 
\Vert e\Vert = 1\}$ of the tangent bundle $T\Sigma$ with respect to a Riemannian metric $\Vert\cdot\Vert$, or equivalently the quotient of $T\Sigma \setminus(\text{zero section})$ by scalar multiplication by the positive real numbers $\bR_{>0}$. 
The projection $\varpi: U\Sigma \to \Sigma$ induces the Gysin exact sequence 
\begin{equation}\label{eq:exact2}
0 \to H^1(\Sigma; \bZ/2) \overset{\varpi^*}\to H^1(U\Sigma; \bZ/2) 
\overset{\iota^*}\to (\bZ/2)^{\pi_0(\Sigma)},
\end{equation}
where the map $\iota^*$ is induced by the inclusion of the fiber on each fiber into the total space $U\Sigma$.  
A spin structure on $\Sigma$ is a fiberwise double covering of $U\Sigma$. It can be regarded as an element $s \in H^1(U\Sigma; \bZ/2)$ such that $\iota^*s = (1, 1, \dots, 1) \in (\bZ/2)^{\pi_0(\Sigma)}$, and so defines the $\bmod\, 2$ rotation number $\rot_s$ of any immersed loop $\ell: S^1 \to \Sigma$ by
\begin{equation}\label{eq:rot2}
\rot_s(\ell) := \langle s, [\overset\cdot\ell]\rangle \in \bZ/2,
\end{equation}
where $\overset\cdot\ell: S^1 \to U\Sigma$ is the normalized velocity vector
of $\ell$. It is independent of the orientation of $\ell$, since it is defined over $\bZ/2$. Two spin structures $s$ and $s'$ coincide with each other if $\rot_{s'}(\ell) = \rot_{s}(\ell)$ for any immersed loop $\ell$. In fact, the difference $s'-s$ can be regarded as an element 
of $H^1(\Sigma; \bZ/2)$, and we have $\rot_{s'}(\ell) - \rot_{s}(\ell)
= \langle s'-s, [\ell]\rangle$. 
As was proved by Johnson \cite[Theorem 1A]{J80a}, 
if $\ell$ is a simple closed curve, then the homology class 
$[\overset\cdot\ell] \in H_1(U\Sigma; \bZ/2)$ depends only on the homology class $[\ell] \in H_1(\Sigma; \bZ/2)$. 
\begin{prop}[Poincar\'e-Hopf theorem]\label{prop:PH}
Suppose $\Sigma$ is connected, and its boundary $\pa\Sigma$ is non-empty. Fix a labeling of the boundary components, and denote each of them by $\pa_k\Sigma$, $1 \leq k \leq n$. 
Then there exists a spin structure on $\Sigma$, and we have
$$
\sum^n_{k=1}\rot_s(\pa_k\Sigma) \equiv \chi(\Sigma) \pmod{2}
$$
for any spin structure $s$ on $\Sigma$. 
Here $\chi(\Sigma)$ is the Euler characteristic of the surface $\Sigma$. 
\end{prop}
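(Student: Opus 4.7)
The plan is a three-step argument: establish existence, show the left-hand side is spin-structure independent, then compute for one preferred spin structure. First I would establish existence: since $\Sigma$ is a compact connected surface with $\partial\Sigma \neq \emptyset$, it deformation retracts onto a $1$-dimensional CW complex, so $T\Sigma$ is a trivial plane bundle and $U\Sigma \cong \Sigma \times S^1$ as principal $S^1$-bundles. Pulling back the non-trivial class of $H^1(S^1; \bZ/2)$ via the second projection produces a class in $H^1(U\Sigma; \bZ/2)$ whose restriction to every fibre is the generator, which is a spin structure in view of \eqref{eq:exact2}.

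Second, I would verify that the value of $\sum_k \rot_s(\partial_k\Sigma) \bmod 2$ is independent of $s$. Given two spin structures $s, s'$, the exact sequence \eqref{eq:exact2} yields $s - s' = \varpi^*\xi$ for a unique $\xi \in H^1(\Sigma; \bZ/2)$, and as noted after \eqref{eq:rot2} one has $\rot_s(\ell) - \rot_{s'}(\ell) = \langle \xi, [\ell]\rangle$ for any simple closed curve $\ell$. Hence
\begin{equation*}
\sum_k \rot_s(\partial_k\Sigma) - \sum_k \rot_{s'}(\partial_k\Sigma) \equiv \Bigl\langle \xi, \sum_k [\partial_k\Sigma]\Bigr\rangle \pmod{2},
\end{equation*}
and $\sum_k [\partial_k\Sigma]$ vanishes in $H_1(\Sigma; \bZ/2)$ because it is the image of the mod-$2$ relative fundamental class under the composition $H_2(\Sigma, \partial\Sigma; \bZ/2) \to H_1(\partial\Sigma; \bZ/2) \to H_1(\Sigma; \bZ/2)$ of the long exact sequence of the pair, which is zero.

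Third, I would compute the sum for one concrete spin structure. Choose a nowhere-zero vector field $X$ on $\Sigma$ (which exists because $T\Sigma$ is trivial) and let $s_X$ be the spin structure determined by the section $X/\lVert X\rVert : \Sigma \to U\Sigma$ as in the first step; then $\rot_{s_X}(\ell)$ equals, mod $2$, the winding number of $\dot\ell/\lVert\dot\ell\rVert$ relative to $X$ along $\ell$. Pick an auxiliary smooth vector field $Y$ with isolated zeros $p_1,\ldots,p_m$ in the interior of $\Sigma$ and outward-pointing on $\partial\Sigma$; the classical Poincar\'e-Hopf theorem for manifolds with boundary gives $\sum_i \mathrm{ind}_{p_i}(Y) = \chi(\Sigma)$. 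Using the trivialization $U\Sigma \cong \Sigma\times S^1$ induced by $X$, the assignment $\phi := Y/\lVert Y\rVert$ defines a map $\phi : \Sigma \setminus \{p_1,\ldots,p_m\} \to S^1$. Excising small disks $D_i$ around each $p_i$ and applying $\phi_*[\partial(\Sigma \setminus \bigsqcup D_i^\circ)] = 0$ in $H_1(S^1)$ yields
\begin{equation*}
\sum_k \deg(\phi|_{\partial_k\Sigma}) = \sum_i \mathrm{ind}_{p_i}(Y) = \chi(\Sigma).
\end{equation*}
Along each $\partial_k\Sigma$, the tangent vector and the outward unit normal differ by the constant rotation by $\pi/2$ (boundary orientation convention), while $Y|_{\partial\Sigma}$ is homotopic to the outward normal through outward-pointing fields, so the winding of the tangent vector relative to $X$ along $\partial_k\Sigma$ equals $\deg(\phi|_{\partial_k\Sigma})$; reducing mod $2$ and summing yields the congruence.

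The main obstacle is the orientation-convention bookkeeping in the third step: ensuring the Stokes-type homological identity produces $+\chi(\Sigma)$ rather than $-\chi(\Sigma)$, and verifying that the mod-$2$ rotation number $\rot_{s_X}(\partial_k\Sigma)$ agrees with the winding $\deg(\phi|_{\partial_k\Sigma})$ without spurious signs from reversing a boundary orientation or from the $\pi/2$ rotation between tangent and normal. The first two steps are essentially formal consequences of the Gysin sequence \eqref{eq:exact2} and the long exact sequence of the pair $(\Sigma, \partial\Sigma)$.
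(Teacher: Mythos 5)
Your proposal is correct, but it takes a somewhat different route from the paper. The paper treats an arbitrary spin structure $s$ in one stroke: since $U\Sigma$ is trivial, $s$ lifts to an integral class $\tilde s\in H^1(U\Sigma;\bZ)$ restricting to the positive generator on each fibre, i.e.\ a framing, and the integer-valued Poincar\'e--Hopf theorem for framings (quoted from \cite[Lemma 3.3]{Kaw18}) gives $\sum_k\rot_{\tilde s}(\pa_k\Sigma)=\chi(\Sigma)$ in $\bZ$, which reduces mod $2$ to the claim. You instead split the statement into two parts: first, independence of the left-hand side from $s$, via the Gysin sequence \eqref{eq:exact2} and the fact that $\sum_k[\pa_k\Sigma]$ is the boundary of the relative fundamental class, hence vanishes in $H_1(\Sigma;\bZ/2)$; second, a computation for the single trivialization-induced structure $s_X$, where you in effect reprove the framed Poincar\'e--Hopf identity from the classical one by comparing the boundary tangent field with an outward-pointing field $Y$ and pushing the degree count to the zeros of $Y$. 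Both arguments are sound (only spin structures of the form $s_X$ are reachable by your third step, which is exactly why your independence step is needed and why the paper, working with an arbitrary $s$ through its integral lift, can skip it). What the paper's route buys is brevity, at the price of invoking an external lemma; what yours buys is self-containedness, everything being reduced to the classical Poincar\'e--Hopf theorem, at the price of the extra independence step and the orientation bookkeeping you flag (the constant $\pi/2$ rotation between tangent and outward normal, the convex homotopy of $Y|_{\pa\Sigma}$ to the outward normal, and the sign of the excised circles $\pa D_i$ in the boundary of $\Sigma\setminus\bigsqcup D_i^\circ$), all of which do go through.
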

\begin{proof} Since $\Sigma$ is connected and $\pa\Sigma \neq \emptyset$, the bundle $U\Sigma$ is trivial. Hence there exists a fiberwise double covering of $U\Sigma$. Moreover, for any spin structure $s \in H^1(U\Sigma; \bZ/2)$, 
we have some $\tilde s \in H^1(U\Sigma; \bZ)$ whose $\bmod\, 2$ reduction equals $s \in 
H^1(U\Sigma; \bZ/2)$ and whose restriction to the fiber equals the positive generator in $H^1(S^1; \bZ)$. The cohomology class $\tilde s$ can be regarded as a framing of $U\Sigma$, and the $\bmod \, 2$ reduction of its rotation number $\rot_{\tilde s}$ equals $\rot_s$ for any immersed loop. Hence,
by the Poincar\'e-Hopf theorem, we have 
$$
\sum^n_{k=1}\rot_{\tilde s}(\pa_k\Sigma) = \chi(\Sigma) \in \bZ,
$$
where the orientation on $\pa_k\Sigma$ is induced by that of $\Sigma$. (See, for example, \cite[Lemma 3.3]{Kaw18}.) This proves the proposition. 
\end{proof}

Let $C = \coprod^m_{l=1}C_l$ be a disjoint union of simple closed curves in the surface $\Sigma$. Take an open tubular neighborhood 
$\nu: C\times \mathopen]-2, 2\mathclose[ \hookrightarrow \Sigma$ with $\nu(\cdot, 0) = 1_C$. We denote $\Sigma' := \Sigma \setminus \nu(C\times \mathopen]-1, 1\mathclose[)$ and 
$C_l^\pm := {\nu}(C_l\times \{\pm1\}) \subset \Sigma'$. 
By gluing all $C_l^+$ and $C_l^-$ 
through $\nu(\cdot, 1)$ and $\nu(\cdot, -1)$, 
we have a surjective $C^\infty$ map $q: \Sigma' \to \Sigma$. 
We denote $C' := q\inv(C) = \coprod^m_{l=1} (C_l^+ \amalg C_l^-)
\subset \Sigma'$. 
\begin{lem}\label{lem:spinglue} Let $s'$ be a spin structure on $\Sigma'$. Then $s' = q^*s$ for some spin structure $s$ on $\Sigma$, 
if and only if $\rot_{s'}(C^+_l) = \rot_{s'}(C^-_l)$ for every $1 \leq l \leq m$.
\end{lem}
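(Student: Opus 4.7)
The plan is to prove the two implications separately. Necessity will follow directly from the definition of $\rot_{s'}$; the interesting direction is sufficiency, which I propose to handle via the Mayer--Vietoris sequence attached to the pushout presentation of $U\Sigma$ that arises naturally from the cutting/gluing construction.

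For necessity, suppose $s' = q^*s$. The tubular data $\nu$ makes $q$ smooth and sends both $C_l^+$ and $C_l^-$ diffeomorphically to $C_l$ in an orientation-preserving way, so the differential $dq$ carries each lifted velocity field $\overset\cdot{C_l^\pm}: S^1 \to U\Sigma'$ to $\overset\cdot{C_l}: S^1 \to U\Sigma$. Therefore
$$
\rot_{s'}(C_l^\pm) = \langle q^*s, [\overset\cdot{C_l^\pm}]\rangle = \langle s, [\overset\cdot{C_l}]\rangle = \rot_s(C_l),
$$
which immediately gives $\rot_{s'}(C_l^+) = \rot_{s'}(C_l^-)$.

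For sufficiency, my first step is to identify $\Sigma$ as the pushout of $C \xleftarrow{\nabla} C\amalg C \hookrightarrow \Sigma'$ (the right-hand map being $C\amalg C \cong C^+ \amalg C^- \hookrightarrow \Sigma'$) and to lift this via $dq$ to a pushout presentation of $U\Sigma$ in terms of $U\Sigma|_C \xleftarrow{\nabla} U\Sigma|_C\amalg U\Sigma|_C \hookrightarrow U\Sigma'$. The associated Mayer--Vietoris sequence in $\bZ/2$-cohomology then contains the exact fragment
$$
H^1(U\Sigma; \bZ/2) \to H^1(U\Sigma'; \bZ/2) \oplus H^1(U\Sigma|_C; \bZ/2) \to H^1(U\Sigma|_C; \bZ/2)^{\oplus 2}
$$
with rightmost map $(x,t) \mapsto (x|_{C^+} - t,\, x|_{C^-} - t)$. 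By exactness, $s'$ lies in the image of $\bar q^*$ iff there exists $t \in H^1(U\Sigma|_C; \bZ/2)$ with $s'|_{C^+} = t = s'|_{C^-}$, equivalently iff $s'|_{C_l^+} = s'|_{C_l^-}$ in $H^1(U\Sigma|_{C_l}; \bZ/2)$ for every $l$.

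To close the argument, I will use the Gysin sequence for the $S^1$-bundle $U\Sigma|_{C_l} \to C_l$: it presents $H^1(U\Sigma|_{C_l}; \bZ/2) \cong (\bZ/2)^2$ as a ``fiber'' summand detected by $\iota^*$ and a ``base'' summand pulled back from $H^1(C_l; \bZ/2)$. The spin condition $\iota^*s' = (1,\ldots,1)$ forces the fiber parts of $s'|_{C_l^\pm}$ both to equal $1$, so the difference $s'|_{C_l^+} - s'|_{C_l^-}$ lies in the base summand; evaluating against $[\overset\cdot{C_l^\pm}]$ identifies this difference with $\rot_{s'}(C_l^+) - \rot_{s'}(C_l^-)$, which vanishes by hypothesis. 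Finally, to confirm that the recovered $s$ is a genuine spin structure, I restrict $\bar q^*s = s'$ to the fiber over any $p \in \Sigma\setminus C$, where $\bar q$ is a fiberwise diffeomorphism: this forces $\iota^*s = \iota^*s' = 1$ on the component containing $p$, and since every component of $\Sigma$ meets $\Sigma\setminus C$, we conclude $\iota^*s = (1,\ldots,1)$ globally. The main obstacle I anticipate is justifying the pushout description of $U\Sigma$ with enough care that Mayer--Vietoris applies, which should amount to routine bookkeeping with the tubular neighborhood $\nu$.
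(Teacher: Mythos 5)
Your argument is correct and is essentially the paper's proof in different packaging: the paper's excision isomorphism $H^*(U\Sigma, U\Sigma\vert_{C})\cong H^*(U\Sigma', U\Sigma'\vert_{C'})$ combined with the morphism of long exact sequences of pairs and a diagram chase is exactly the Mayer--Vietoris sequence you extract from the pushout description of $U\Sigma$, and both routes reduce the lemma to the statement that the two restrictions of $s'$ to $U\Sigma\vert_{C_l}$ agree if and only if the corresponding rotation numbers agree. Your explicit Gysin-sequence justification of that last equivalence, and the check that the descended class $s$ satisfies the fiber condition and hence is a spin structure, are points the paper leaves implicit, but they do not change the approach.
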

\begin{proof} In this proof, the coefficients of all the cohomology groups are $\bZ/2$. By excision we have an isomorphism $q^*: H^*(U\Sigma, U\Sigma\vert_{C}) \cong H^*(U\Sigma', U\Sigma'\vert_{C'})$. 
Consider the morphism of cohomology exact sequences of pairs
\begin{equation}\label{eq:spindiag}
\begin{CD}
H^1(U\Sigma, U\Sigma\vert_{C}) 
@>>> H^1(U\Sigma)@>{\iota^*}>> H^1(U\Sigma\vert_{C})
@>>> H^2(U\Sigma, U\Sigma'\vert_{C})\\
@| @V{q^*}VV @V{q^*}VV @| \\
H^1(U\Sigma', U\Sigma'\vert_{C'}) 
@>>> H^1(U\Sigma')
@>{\iota^*}>> H^1(U\Sigma'\vert_{C'})
@>>> H^2(U\Sigma', U\Sigma'\vert_{C'}).
\end{CD}
\end{equation}
The condition on the $\bmod\,2$ rotation numbers is equivalent to 
$\iota^*s'  \in q^*(H^1(U\Sigma\vert_C))$. Clearly it follows from $s' \in q^*(H^1(U\Sigma))$. Conversely, $\iota^*s' \in q^*(H^1(U\Sigma\vert_C))$ implies $s' \in q^*(H^1(U\Sigma))$ by some diagram chasing on \eqref{eq:spindiag}. This proves the lemma.
\end{proof}

As is known, there exists a spin structure also when $\Sigma$ is closed of genus $g$.
This follows from the Gysin exact sequence \eqref{eq:exact2} and the fact $\chi(\Sigma) = 2-2g \equiv 0 \pmod{2}$. We can prove it also as follows: 
Let $\Sigma^c$ be the complement of the interior of an embedded closed disk $D$ in $\Sigma$. Then, by Proposition \ref{prop:PH}, we have spin structures on both of $\Sigma^c$ and $D$, and the $\bmod\,2$ rotation numbers of the boundary $\pa\Sigma^c = \pa D$ for both of them equal $1-2g \equiv 1\pmod{2}$.
Hence, by Lemma \ref{lem:spinglue}, we can glue them to get a spin structure on the closed surface $\Sigma$. \par

\medskip
Now let $s'$ be a spin structure on $\Sigma'$ with $\rot_{s'}(C^+_l) = 
\rot_{s'}(C^-_l)$ for each $1 \leq l \leq m$. By Lemma \ref{lem:spinglue}, the set $\cS(s')$ of all spin structures $s$ on $\Sigma$ with $q^*s = s'$ is non-empty. 
Since the spin structure is defined to be a fiberwise double covering of the unit tangent bundle of the surface, the restriction $U\Sigma\vert_{C_l}$ for each $1 \leq l \leq m$ admits a unique nontrivial covering transformation $\tau_l$ whose restricts to the antipode map of the circle on each fiber. The difference of two $s_1$ and $s_2 \in \cS(s')$ comes from re-glueing the spin structure 
using some of $\tau_l$'s. In other words, the group $H_1(C; \bZ/2) \cong (\bZ/2)^{\oplus n}$ acts on the set $\cS(s')$ transitively 
through the $\tau_l$'s.
Now fix some $1 \leq l_0 \leq m$.
%, and consider the re-glueing by $\tau := \tau_{l_0}$. 
\begin{lem}\label{lem:l_0} If we obtain $s_2 \in \cS(s')$ by re-glueing $s_1\in \cS(s')$ along $C_{l_0}$ using $\tau := \tau_{l_0}$, then the difference $s_2 - s_1 \in H^1(\Sigma; \bZ/2)$ equals the Poincar\'e dual of $[C_{l_0}] \in H_1(\Sigma, \pa\Sigma; \bZ/2)$. 
\end{lem}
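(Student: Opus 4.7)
The plan is to determine the class $s_2 - s_1 \in H^1(\Sigma; \bZ/2)$ by testing it against simple closed curves via the rotation-number pairing, and to show the result agrees with the mod $2$ intersection pairing with $[C_{l_0}]$, which is exactly the Poincaré duality pairing.

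First I would note that both $s_1$ and $s_2$ restrict to the same element of $H^1(U\Sigma; \bZ/2)|_{\text{fiber}}$, so their difference lies in the image of $\varpi^*$ and can be identified with an element of $H^1(\Sigma; \bZ/2) = \mathrm{Hom}(H_1(\Sigma; \bZ/2), \bZ/2)$. Since $H_1(\Sigma; \bZ/2)$ is generated by classes of simple closed curves, and by Johnson's theorem already cited ($[\dot\ell]$ depends only on $[\ell]$ for $\ell$ simple), it suffices to evaluate $s_2 - s_1$ on $[\ell]$ for each simple closed curve $\ell \subset \Sigma$, chosen transverse to $C_{l_0}$. For such $\ell$,
\[
\langle s_2 - s_1, [\ell]\rangle = \rot_{s_2}(\ell) - \rot_{s_1}(\ell) \in \bZ/2.
\]

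Next I would analyze the lift of $\dot\ell: S^1 \to U\Sigma$ to the two double covers $P_1 \to U\Sigma$ and $P_2 \to U\Sigma$ corresponding to $s_1$ and $s_2$. Cut $\ell$ by $C_{l_0}$ into $n := \#(\ell \pitchfork C_{l_0})$ arcs; in $U\Sigma'$ the curve $\dot\ell$ becomes a concatenation of $n$ arcs whose endpoints lie over $C_{l_0}^+ \cup C_{l_0}^-$. Because $P_1$ and $P_2$ are identified everywhere off $U\Sigma|_{C_{l_0}}$ (they come from the same spin structure $s'$ on $\Sigma'$) and differ at the $C_{l_0}$ gluing by the fiberwise antipode $\tau = \tau_{l_0}$, the two lifts of $\dot\ell$ starting at the same point differ at the end by $\tau^n$. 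Since $\tau^2 = \mathrm{id}$, this means the monodromy on the $s_2$-lift is the monodromy on the $s_1$-lift times $\tau^n$, so
\[
\rot_{s_2}(\ell) - \rot_{s_1}(\ell) \equiv n \pmod{2} = [\ell]\cdot[C_{l_0}] \in \bZ/2.
\]

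Finally, the mod $2$ intersection pairing $[\ell]\cdot[C_{l_0}]$ between $H_1(\Sigma; \bZ/2)$ and $H_1(\Sigma, \pa\Sigma; \bZ/2)$ is exactly the Poincaré duality pairing, so it equals $\langle \mathrm{PD}[C_{l_0}], [\ell]\rangle$. Combining the two displays gives $\langle s_2 - s_1, [\ell]\rangle = \langle \mathrm{PD}[C_{l_0}], [\ell]\rangle$ on a generating set of $H_1(\Sigma; \bZ/2)$, hence $s_2 - s_1 = \mathrm{PD}[C_{l_0}]$ in $H^1(\Sigma; \bZ/2)$.

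The main obstacle I expect is the bookkeeping of the lift across the gluing: one must verify carefully that each traversal of $C_{l_0}$ by $\dot\ell$ in $U\Sigma$ actually corresponds to a gluing event that changes by $\tau$ in $P_2$ relative to $P_1$ (and not, say, by a different element of the fiber), and that no extra contribution appears from the $C_l$ with $l \neq l_0$. Once the local picture of $\tau$-re-gluing on the double cover is written down explicitly near one intersection point, the counting $\tau^n$ and the congruence $n \equiv [\ell]\cdot[C_{l_0}] \pmod 2$ are routine.
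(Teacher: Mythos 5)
Your proposal is correct and takes essentially the same route as the paper: the paper also reduces the lemma to the identity $\rot_{s_2}(\ell)-\rot_{s_1}(\ell)=[C_{l_0}]\cdot[\ell]$ for loops transverse to $C_{l_0}$, lifts $\dot\ell$ arc by arc across the $n$ intersection points, and observes that each crossing changes the gluing by exactly one application of $\tau$, giving the difference $n\bmod 2$. The only cosmetic difference is that the paper argues for arbitrary immersed loops while you restrict to a generating set of simple closed curves (making the appeal to Johnson's theorem unnecessary); the core counting argument is identical.
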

\begin{proof} The lemma is equivalent to the equation 
\begin{equation}\label{eq:l_0}
\rot_{s_2}(\ell) - \rot_{s_1}(\ell) = [C_{l_0}] \cdot [\ell] \in \bZ/2
\end{equation}
for any immersed loop $\ell: S^1= \bR/\bZ \to \Sigma$. Here 
the RHS means the $\bmod\,2$ intersection number of the homology classes $[C_{l_0}]$ and $[\ell]$. We may assume $\ell$ is transverse to $C_{l_0}$. It is clear if the intersection $\ell\cap C_{l_0}$ is empty. Hence we can assume $\ell\cap C_{l_0}$ is not empty. Then there exist some real numbers $t_0 < t_1 < \cdots < t_{n-1} < t_n = t_0+1$ with $n \geq 1$ such that $\ell\inv(C_{l_0}) = \{t_i\bmod{\bZ} \in S^1 = \bR/\bZ; \,\, 1 \leq i \leq n\}$. Then we have $n \equiv [C_{l_0}] \cdot [\ell] \pmod{2}$. 
For each $1 \leq i \leq n$, we denote $I_i := [t_{i-1}, t_i] \subset \bR/\bZ$ and take a lift $\ell_i$ of $\overset\cdot\ell\vert_{I_i}: I_i \to U\Sigma$ to the spin structure $s_1$. We have $\ell_{i+1}(t_i) = \tau^{\epsilon_i}(\ell_{i}(t_i))$ for some $\epsilon_i \in \{0,1\}$. 
Then we have $\rot_{s_1}(\ell) \equiv \sum^n_{i=1}\epsilon_i \bmod{2}$. If we consider the spin structure $s_2$ similarly, we have 
$$
\rot_{s_2}(\ell) \equiv \sum^n_{i=1}(\epsilon_i+1) \equiv 
\rot_{s_1}(\ell) + n \equiv \rot_{s_1}(\ell) + [C_{l_0}] \cdot [\ell]
 \bmod{2},
$$
which proves the equation \eqref{eq:l_0} and so the lemma.
\end{proof}
We denote the composite of the homomorphism induced by the inclusion $C \hookrightarrow \Sigma$ and the Poincar\'e duality map by $\imath_*: H_1(C; \bZ/2) \to H_1(\Sigma, \pa\Sigma; \bZ/2) = H^1(\Sigma; \bZ/2)$. By the excision isomorphisms $H_*(C; \bZ/2)
= H_*(C\cup \pa\Sigma, \pa\Sigma; \bZ/2)$ and 
$H_*(\Sigma', \pa\Sigma'; \bZ/2)
= H_*(\Sigma, C\cup \pa\Sigma; \bZ/2)$, we have a long exact sequence
\begin{equation}\label{eq:exactC}
0 \to H_2(\Sigma, \pa\Sigma; \bZ/2) \overset{j_*}\to 
H_2(\Sigma', \pa\Sigma'; \bZ/2) \overset{\pa_*}\to
H_1(C; \bZ/2) \overset{\imath_*}\to H^1(\Sigma; \bZ/2).
\end{equation}

One can deduce the following theorem from what we have proved. 
\begin{thm}\label{thm:spinglue}
Let $s'$ be a spin structure on $\Sigma'$ with $\rot_{s'}(C^+_l) = 
\rot_{s'}(C^-_l)$ for each $1 \leq l \leq m$. Then the action of $H_1(C; \bZ/2)$ on the set $\cS(s')$ is transitive, and its isotropy subgroup at each point equals $\pa_*(H_2(\Sigma', \pa\Sigma'; \bZ/2))$. 
\end{thm}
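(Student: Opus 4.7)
The plan is to assemble the theorem from the two ingredients already at hand: the fact (noted between Lemmas~\ref{lem:spinglue} and~\ref{lem:l_0}) that any two spin structures in $\cS(s')$ differ by a re-gluing along some sub-collection of the $C_l$'s, and Lemma~\ref{lem:l_0}, which pins down the cohomology class of such a re-gluing difference. Transitivity then drops out immediately, and the isotropy is identified via the exact sequence~\eqref{eq:exactC}.

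First I would formalize transitivity. Given $s_1, s_2 \in \cS(s')$, both restrict to the same spin structure $s'$ on $\Sigma'$, so they differ only in how $U\Sigma'\vert_{C^+_l}$ is identified with $U\Sigma'\vert_{C^-_l}$ for each $l$. On each component $C_l$ there are exactly two such identifications, differing by the covering involution $\tau_l$. Hence there is a unique $\xi = (\xi_1, \dots, \xi_m) \in H_1(C; \bZ/2) \cong (\bZ/2)^{\oplus m}$ (with basis $[C_l]$) such that $s_2$ is obtained from $s_1$ by re-gluing along those $C_l$ with $\xi_l = 1$. This exhibits the $H_1(C; \bZ/2)$-action as transitive.

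Next I would compute the isotropy at $s_1$. By iterating Lemma~\ref{lem:l_0} one $C_l$ at a time, the element $\xi = \sum_l \xi_l [C_l] \in H_1(C; \bZ/2)$ acts on $s_1$ by adding $\sum_l \xi_l \cdot \mathrm{PD}[C_l] = \imath_*(\xi) \in H^1(\Sigma; \bZ/2)$ to it. Therefore $\xi$ stabilizes $s_1$ if and only if $\imath_*(\xi) = 0$; in particular the isotropy subgroup is $\ker(\imath_*)$ and is independent of the chosen $s_1$. The exact sequence~\eqref{eq:exactC} identifies $\ker(\imath_*) = \mathrm{image}(\pa_*) = \pa_*(H_2(\Sigma', \pa\Sigma'; \bZ/2))$, which gives the claimed isotropy.

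There is no real obstacle beyond bookkeeping; the only point that requires a bit of care is the linear iteration of Lemma~\ref{lem:l_0}, that is, checking that when one re-glues simultaneously along several $C_l$'s the resulting cohomology difference is indeed $\sum_l \xi_l \cdot \mathrm{PD}[C_l]$ and not some interaction term. This is clear because the re-gluings along different components are independent operations supported in disjoint tubular neighborhoods, so the contributions to $\rot_{s_2}(\ell) - \rot_{s_1}(\ell)$ add up exactly as in the proof of Lemma~\ref{lem:l_0} applied to each $C_l$ separately. With that remark, the two assertions of the theorem follow.
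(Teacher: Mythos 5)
Your argument is essentially the paper's own deduction: the paper obtains the theorem by combining the re-gluing description of $\cS(s')$ (transitivity), Lemma \ref{lem:l_0} for the difference class, and the exact sequence \eqref{eq:exactC} to identify $\ker\imath_* = \pa_*(H_2(\Sigma', \pa\Sigma'; \bZ/2))$, exactly as you do, including the remark that simultaneous re-gluings along the disjoint annuli contribute additively. One correction: the element $\xi \in H_1(C;\bZ/2)$ with $s_2 = \xi\cdot s_1$ is \emph{not} unique in general --- to extract $\xi$ you must first fix an isomorphism $q^*s_2 \cong s' \cong q^*s_1$ of double coverings over $U\Sigma'$, and changing this isomorphism by the deck transformation over a connected component $\Sigma'_0$ of $\Sigma'$ changes $\xi$ by $\pa_*[\Sigma'_0]$; uniqueness of $\xi$ would force the action to be free, contradicting the nontrivial isotropy you compute in the next step (for instance when some $C_l$ separates $\Sigma$). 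Since only the existence of such a $\xi$ is used, this slip does not affect the validity of the proof.
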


Here the fundamental classes of the connected components of $\Sigma'$ constitute a $\bZ/2$-basis of the homology group $H_2(\Sigma', \pa\Sigma'; \bZ/2)$. For any connected component
$\Sigma'_0$, $\pa_*[\Sigma'_0] \in H_1(C; \bZ/2)$ is the fundamental class of the boundary of the image $q(\Sigma'_0)$. It should be remarked that two boundary components of $\Sigma'_0$ which have the same image in $C$ cancel each other in $H_1(C; \bZ/2)$.\par
\medskip
Now let $\Sigma_{g,b}$ denote a compact connected oriented surface of genus $g$ with $b$ boundary components. If $\chi(\Sigma_{g,b}) = 2-2g -b$ is negative, it admits a pants decomposition $\{\delta_i\}^{3g-3+b}_{i=1}$, where $\delta_i$ is a simple closed curve in $\Sigma_{g,b}$. It yields $-\chi(\Sigma_{g,b}) = 2g -2+b$ pairs of pants
$\{P_j\}^{2g-2+b}_{j=1}$. Suppose a spin structure is given on each pair of pants such that the $\bmod\, 2$ rotation numbers of each $\delta_i$ for the spin structures in its both sides coincide. Then the set of all the spin structures on $\Sigma_{g,b}$ which restrict to the given one at each pair of pants is bijective to the quotient of $H_1(\bigcup^{3g-3+b}_{i=1}\delta_i; \bZ/2)$ by the action of the group generated by the covering transformations for the given spin structures on the pairs of pants. More explicitly, by changing the label of $\{\delta_i\}$ if necessary, 
we glue all the pair of pants along $\{\delta_i\}^{3g-3+b}_{i=g+1}$ to obtain a connected surface $\hat \Sigma$ of genus $0$, 
which is diffeomorphic to $\Sigma_{0, 2g+b}$.
We can construct the surface $\hat \Sigma$ by adding a pair of pants one by one. 
By Theorem \ref{thm:spinglue} we have a unique spin structure $\hat s$ on $\hat\Sigma$ which restricts to the given spin structure on each pair of pants. 
We have exactly $2^g$ spin structures on $\Sigma_{g,b}$ extending the structure $\hat s$ again by Theorem \ref{thm:spinglue}. In fact, 
the inclusion homomorphism $H_1(\bigcup^g_{i=1}\delta_i; \bZ/2) \to 
H_1(\Sigma_{g,b}, \pa\Sigma_{g,b}; \bZ/2)$ is injective. 
\par

\subsection{Spin structures on a hyperbolic surface}

Since $\PSL$ acts on the unit tangent bundle $U\bH$ of the upper half plane $\bH$ in a free and transitive way, 
the map $\mu_v: \PSL \to U\bH$, $A \mapsto Av$, is a diffeomorphism for any $v \in U\bH$. We denote by $\bp: \SL \to \PSL$ the quotient map, which is a unique (nontrivial) double covering of $\PSL$.  
Let $\Gamma \subset \PSL$ be a discrete subgroup with no elliptic elements. 
The diffeomorphism $\mu_v$ induces a diffeomorphism $\overline{\mu_v}: 
\Gamma\backslash\PSL \to U(\Gamma\backslash\bH)$, thorough which 
a spin structure on $\Gamma\backslash\bH$ is identified with a double covering of the space $\Gamma\backslash\PSL$ whose pullback to $\PSL$ is isomorphic to the double covering $\bp$, since the inclusion map of each fiber $S^1 \hookrightarrow U\bH \cong \PSL$ is a homotopy equivalence.
\par
We call an element $A \in \SL{\setminus \{I\}}$ {\it elliptic} if $\bp(A)$ has a fixed point inside the upper half plane $\bH$. In particular, the matrix $-I$ is elliptic. 
An element $A \in \SL$ is {\it hyperbolic} (resp.\ {\it parabolic}) if $\bp(A) = \pm A \in \PSL$ is hyperbolic (resp.\ parabolic). \par
Let $\hat\Gamma \subset \SL$ be a discrete subgroup with no elliptic elements. Since $-I \not\in \hat \Gamma$, the restriction $\bp\vert_{\hat\Gamma}: \hat\Gamma \to \PSL$ is injective. The image $\Gamma := \bp(\hat\Gamma) \subset \PSL$ is a discrete subgroup with no elliptic elements, so that 
the quotient $\Gamma\backslash\bH$ is a Riemann surface. 
Since $\bp\vert_{\hat\Gamma}: \hat\Gamma \to \Gamma$ is a group isomorphism, the map $\bp$ induces a double covering 
$\bp_{\hat\Gamma}: \hat\Gamma\backslash\SL \to \Gamma\backslash\PSL$, 
$A \bmod{\hat\Gamma} \mapsto \bp(A) \bmod{\Gamma}$. The double covering 
$\overline{\mu_v}\circ \bp\vert_{\hat\Gamma}$ is a spin structure on the surface $\Gamma\backslash\bH$, since its pullback to $\PSL$ equals the covering $\bp$. 
Moreover the spin structure $\overline{\mu_v}\circ \bp\vert_{\hat\Gamma}$ does not depend on the choice of $v \in U\bH$. In order to prove it, we take any other $v' {\in} U\bH$, which equals $v'g$ for some $g \in \PSL$. Then the multiplication by $g$ induces an isomorphism of double coverings
$$
\begin{CD}
\hat\Gamma\backslash\SL @>{\cdot g}>> \hat\Gamma\backslash\SL\\
@V{\overline{\mu_{v'}}\circ\bp_{\hat\Gamma}}VV @V{\overline{\mu_v}\circ\bp_{\hat\Gamma}}VV\\
UT(\Gamma\backslash\bH) @= UT(\Gamma\backslash\bH),
\end{CD}
$$
as was to be proved.\par
We denote the spin structure $\overline{\mu_v}\circ\bp_{\hat\Gamma}$ by $s_{\hat\Gamma}$, and its $\bmod\, 2$ rotation number by $\rot_{\hat\Gamma}$.  
\begin{lem}\label{lem:spinsign}
{\rm (1)} If $A \in \hat\Gamma$ is hyperbolic and 
$C$ its geodesic on $\Gamma\backslash\bH$,
then the sign of $\trace(A)$ equals $(-1)^{\rot_{\hat\Gamma}(C)}$. \par
{\rm (2)} If $A \in \Gamma\setminus\{1\}$ is parabolic and 
$C$ its horocycle on $\Gamma\backslash\bH$,
then the sign of $\trace(A)$ equals $(-1)^{\rot_{\hat\Gamma}(C)}$. 
\end{lem}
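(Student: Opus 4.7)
Both $(-1)^{\rot_{\hat\Gamma}(C)}$ and $\mathrm{sign}(\trace A)$ are invariant under $\SL$-conjugation of $\hat\Gamma$, so in each part I first reduce to a normal form. In part~(1), conjugate so that the axis of $\bp(A)$ is the imaginary axis; then $A = \pm D(\la)$ for some $\la > 1$. In part~(2), conjugate so that $\bp(A)$ fixes $\infty$ (and replace the given $A \in \Gamma$ by its unique lift to $\hat\Gamma$); then $A = \pm\twomatrix{1}{t}{0}{1}$ for some $t \in \bR \setminus \{0\}$. In each case the sign of $\trace A$ records which of the two possible $\SL$-lifts of $\bp(A)$ actually belongs to $\hat\Gamma$.

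My plan is to write down an explicit continuous lift of the normalized velocity field $\overset\cdot C$ to the double covering $\hat\Gamma\backslash\SL \to \Gamma\backslash\PSL$ and read off the monodromy. Take $v \in U\bH$ to be the upward unit tangent at $\sqrt{-1}$, so that $\mu_v(I) = v$. For part~(1) the geodesic $C$ lifts to the positive imaginary axis, parametrized on $[0, 2\log\la]$ by $t \mapsto e^t\sqrt{-1}$; the upward unit tangent at $e^t\sqrt{-1}$ equals $D(e^{t/2}) \cdot v$, so $t \mapsto D(e^{t/2}) \in \SL$ is a continuous lift of $\overset\cdot C$, running from $I$ to $D(\la)$. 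For part~(2) I pick a horocycle $\Im z = y_0 > 0$ in $\bH$ and parametrize it by arc length $u \in [0, t/y_0]$; an analogous direct computation yields a continuous $\SL$-lift of $\overset\cdot C$ of the form $u \mapsto \twomatrix{\sqrt{y_0}}{u\sqrt{y_0}}{0}{1/\sqrt{y_0}}\cdot \tilde g_0$ for an appropriate frame $\tilde g_0$, running from $\tilde g_0$ to $\twomatrix{1}{t}{0}{1} \cdot \tilde g_0$.

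In both parts the monodromy element covering $\bp(A)$ that is produced by the continuous $\SL$-lift of $\overset\cdot C$ is precisely the representative with \emph{positive} trace, namely $D(\la)$ or $\twomatrix{1}{t}{0}{1}$. By the construction of the spin structure $s_{\hat\Gamma}$, the lift of $\overset\cdot C$ to $\hat\Gamma\backslash\SL$ closes up (equivalently $\rot_{\hat\Gamma}(C) = 0$) if and only if this monodromy lies in $\hat\Gamma$. Since $\hat\Gamma$ contains no elliptic element, $-I \notin \hat\Gamma$, so $A$ and $-A$ cannot both lie in $\hat\Gamma$; consequently the monodromy lies in $\hat\Gamma$ if and only if it coincides with $A$ itself, i.e.\ if and only if $\trace A > 0$. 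This gives $(-1)^{\rot_{\hat\Gamma}(C)} = \mathrm{sign}(\trace A)$ in both cases.

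The only substantive step is verifying that the explicit matrix paths really are continuous lifts of $\overset\cdot C$ under the identification $\mu_v$; this is a direct unwinding of definitions and the $\SO(2)$-frame conventions. Once that is in place, the no-elliptic-elements hypothesis $-I \notin \hat\Gamma$ closes the argument in one line, and the same argument handles both the hyperbolic and parabolic cases uniformly.
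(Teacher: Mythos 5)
Your proposal is correct and is essentially the paper's own argument: the paper likewise writes down an explicit one-parameter family in $\SL$ lifting the velocity vector of the geodesic (resp.\ horocycle), observes that its endpoint is the positive-trace lift of $\bp(A)$ (up to the sign $\trace(A)/|\trace(A)|$), and concludes from $-I\notin\hat\Gamma$ that the lift closes iff $\trace A>0$. The only difference is cosmetic: you conjugate $\hat\Gamma$ into normal form, while the paper carries the conjugating matrix $P$ along and bases the frame at $P\sqrt{-1}$; also note the harmless slip that your horocycle path starts at $D(\sqrt{y_0})\tilde g_0$ rather than $\tilde g_0$, which does not affect the monodromy computation.
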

\begin{proof} (1) We have $A = PD(\la)P\inv$ for some $P \in \SL$ and 
$\la \in \bR \setminus [-1,1]$, and define $A_t := P\twomatrix{\exp(t\log|\la|)}{0}{0}{\exp(-t\log|\la|)}P\inv \in \SL$ for $t \in [0,1]$. 
The geodesic $C$ is given by $t\in [0,1] \mapsto A_tP\sqrt{-1}\bmod{\Gamma} \in \Gamma\backslash\bH$. The map $t \in [0,1] \mapsto A_t \bmod{\hat\Gamma}$ 
is a lift of its velocity vector with respect to the spin structure 
$\overline{\mu_{v}}\circ\bp_{\hat\Gamma}$, where $v = P(\pa/\pa y)_{\sqrt{-1}}) \in UT_{P\sqrt{-1}}\bH$. Since $A_1 = \frac{\la}{|\la|}A$, 
the lift is a loop, or equivalently $\rot_{\hat\Gamma}(C) = +1$, if and only if
the sign $\frac{\la}{|\la|}$ of $\trace(A)$ equals $+1$. This proves the part (1). \par
(2) We have $A = \epsilon P\twomatrix{1}{\pm 1}{0}{1}P\inv$ for some $\epsilon \in \{\pm1\}$, $P \in \SL$, and define $A_t := P\twomatrix{1}{\pm t}{0}{1}P\inv \in \SL$ for $t \in [0,1]$. A horocycle $C$ for $A$ is given by  $t \in [0,1] \mapsto A_tP(\sqrt{-1}) \bmod{\Gamma} \in \Gamma\backslash\bH$.
The map $t \in [0,1] \mapsto A_t \bmod{\hat\Gamma}$ 
is a lift of its velocity vector with respect to the spin structure 
$\overline{\mu_{v}}\circ\bp_{\hat\Gamma}$, where $v = P(\pa/\pa x)_{\sqrt{-1}}) \in UT_{P\sqrt{-1}}\bH$. Since $A_1 = \epsilon A$, 
the lift is a loop, or equivalently $\rot_{\hat\Gamma}(C) = +1$, if and only if
the sign $\epsilon$ of $\trace(A)$ equals $+1$. This proves the part (2). 
\end{proof}

The following is well-known in the context of super Riemann surfaces \cite{SW19}.
\begin{cor} Let $\hat\Gamma \subset \SL$ be a discrete subgroup with no elliptic points, and suppose the hyperbolic surface $\bp(\hat\Gamma)\backslash\bH$ is of finite volume. Then the number of punctures the trace of whose holonomy are positive is even.
\end{cor}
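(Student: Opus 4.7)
The plan is to compactify the finite volume surface by cutting off cusps and then apply the Poincar\'e--Hopf parity formula (Proposition \ref{prop:PH}) together with the parabolic part of Lemma \ref{lem:spinsign}.

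First, let $\Gamma := \bp(\hat\Gamma)$ and $S := \Gamma\backslash\bH$, which is a connected oriented hyperbolic surface of finite volume, hence homeomorphic to $\Sigma_{g,0}$ minus $n$ points corresponding to the cusps. Choose pairwise disjoint embedded horocyclic neighborhoods of the cusps and remove their interiors to obtain a compact connected oriented surface $S'$ of genus $g$ with $n$ boundary components $\pa_1 S', \dots, \pa_n S'$. The spin structure $s_{\hat\Gamma}$ on $S$ restricts to a spin structure $s$ on $S'$, so Proposition \ref{prop:PH} applies and gives
\begin{equation*}
\sum_{k=1}^{n}\rot_s(\pa_k S') \equiv \chi(S') \equiv 2-2g-n \equiv n \pmod{2}.
\end{equation*}

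Second, each boundary circle $\pa_k S'$ is a horocycle, so it is freely homotopic in $S'$ to the horocycle $C_k$ associated with some parabolic $A_k \in \hat\Gamma$ representing the conjugacy class of that cusp. Since $\rot_s$ on $S'$ depends only on the free homotopy class of an immersed loop, $\rot_s(\pa_k S') = \rot_{\hat\Gamma}(C_k)$, and Lemma \ref{lem:spinsign}(2) then yields
\begin{equation*}
\mathrm{sign}\,\trace(A_k) = (-1)^{\rot_{\hat\Gamma}(C_k)} = (-1)^{\rot_s(\pa_k S')}.
\end{equation*}
Writing $n_+$ (resp.\ $n_-$) for the number of cusps whose holonomy has positive (resp.\ negative) trace, we have $n_+ + n_- = n$ and $\sum_{k=1}^{n}\rot_s(\pa_k S') \equiv n_- \pmod 2$. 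Combining with the previous displayed congruence, $n_- \equiv n \pmod 2$, hence $n_+ \equiv 0 \pmod 2$, as claimed.

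The only nontrivial point is the identification $\rot_s(\pa_k S') = \rot_{\hat\Gamma}(C_k)$, which is the main place where one must be careful: the boundary circle $\pa_k S'$ comes with the orientation induced from $S'$ and a priori not with the parameterization used in Lemma \ref{lem:spinsign}(2). Because $\rot$ is defined in $\bZ/2$ it is insensitive to orientation reversal, and because the two curves are freely homotopic through immersions inside a cusp neighborhood (both are essential simple loops around the puncture) their $\bmod\,2$ rotation numbers coincide; this is the only step that needs to be spelled out, everything else is a direct assembly of Proposition \ref{prop:PH} and Lemma \ref{lem:spinsign}(2). \qed
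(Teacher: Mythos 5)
Your argument is correct and is essentially the paper's own proof: the paper also combines Proposition \ref{prop:PH} with Lemma \ref{lem:spinsign} to get $n_-\equiv\chi=2-2g-n_+-n_-\pmod 2$, hence $n_+$ even. You merely spell out the cusp truncation and the identification of the boundary rotation numbers with those of the horocycles, details the paper leaves implicit.
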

\begin{proof} Let $g$ be the genus of $\bp(\hat\Gamma)\backslash\bH$, and 
$n_\pm$ the number of punctures the trace of whose holonomy are positive (resp.\ negative). From Proposition \ref{prop:PH} and Lemma \ref{lem:spinsign}, we have $n_- \equiv \chi(\bp(\hat\Gamma)\backslash\bH) = 2-2g-n_+-n_- \pmod{2}$. 
This proves the corollary. 
\end{proof}

Let $\Gamma \subset \PSL$ be a discrete subgroup with no elliptic elements.
A lift of $\Gamma$ is a discrete subgroup $\hat \Gamma \subset \SL$ such that $\bp\vert_{\hat\Gamma}: \hat\Gamma \to \Gamma$ is an isomorphism.
Then $\hat\Gamma$ has no elliptic elements. 
\begin{lem}\label{lem:spinlift} For any spin structure $s$ on 
the surface $\Gamma\backslash\bH$, there exists a unique lift $\hat\Gamma$
of $\Gamma$ such that $s_{\hat\Gamma} = s$. 
\end{lem}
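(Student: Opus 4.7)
The plan is to match spin structures with lifts via Galois theory of the regular covering $\SL \to \Gamma\backslash\PSL$, obtaining existence and uniqueness simultaneously. Consider the composite covering $\SL \xrightarrow{\bp} \PSL \to \Gamma\backslash\PSL$; its group of deck transformations (acting by left multiplication) is $\tilde\Gamma := \bp\inv(\Gamma) \subset \SL$, which fits into the central extension $1 \to \{\pm I\} \to \tilde\Gamma \to \Gamma \to 1$. Via the diffeomorphism $\overline{\mu_v}: \Gamma\backslash\PSL \cong U(\Gamma\backslash\bH)$, a spin structure on $\Gamma\backslash\bH$ becomes the same datum as a connected double cover $X \to \Gamma\backslash\PSL$ whose pullback along $\pi: \PSL \to \Gamma\backslash\PSL$ is the two-fold cover $\bp: \SL \to \PSL$.

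By the Galois correspondence, such intermediate covers between $\SL$ and $\Gamma\backslash\PSL$ correspond bijectively to subgroups $H \subset \tilde\Gamma$ through $X = H\backslash\SL$. Computing the fiber product $\PSL \times_{\Gamma\backslash\PSL}(H\backslash\SL)$ using the central extension, one finds that it is $\SL$ (connected) if and only if $H \cap \{\pm I\} = \{I\}$ and $H \cdot \{\pm I\} = \tilde\Gamma$. Combined with the double-cover condition $[\tilde\Gamma : H] = 2$, this is equivalent to requiring $-I \notin H$ and $[\tilde\Gamma : H] = 2$. Under these conditions $\bp\vert_H : H \to \Gamma$ is both injective (trivial kernel) and surjective (index count), so $H$ is a lift of $\Gamma$; conversely, any lift $\hat\Gamma$ clearly satisfies these two conditions.

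Setting $\hat\Gamma := H$, a direct unraveling of the definition $s_{\hat\Gamma} = \overline{\mu_v}\circ\bp_{\hat\Gamma}$ from \S5.2 identifies $s_{\hat\Gamma}$ with the intermediate cover $\hat\Gamma\backslash\SL \to \Gamma\backslash\PSL$ corresponding to $H$, and hence with the given $s$; this proves existence. Uniqueness is immediate, since the Galois correspondence is a bijection. The main delicacy is the characterization of when the pullback to $\PSL$ stays connected as $\SL$ rather than splitting into $\PSL \sqcup \PSL$; once the subgroup condition $-I \notin H$ is isolated as the precise obstruction, the rest is formal covering-space bookkeeping on the central extension above.
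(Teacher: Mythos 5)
Your argument is correct, but it takes a genuinely different route from the paper. You work entirely with the covering tower $\SL \to \PSL \to \Gamma\backslash\PSL$, whose deck group is $\tilde\Gamma := \bp\inv(\Gamma)$, and — using the paper's own reformulation of a spin structure as a double cover of $\Gamma\backslash\PSL$ pulling back to $\bp$ — identify such a structure with an index-two subgroup $H \subseteq \tilde\Gamma$ satisfying $-I \notin H$, i.e.\ with a lift of $\Gamma$; your fiber-product (double coset) criterion is correct, and it delivers existence and uniqueness in one stroke. The paper argues differently: uniqueness is geometric, via Lemma \ref{lem:spinsign} (the sign of the trace of the lift of each element of $\Gamma$ is forced by the $\bmod\,2$ rotation number of its geodesic or horocycle, so $s$ determines $\hat\Gamma$ elementwise), and existence is obtained by lifting the right $\PSL$-action on $\Gamma\backslash\PSL$ to a right $\SL$-action $\hat m$ on the total space $E_s$ and taking $\hat\Gamma$ to be the stabilizer of a point. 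Your proof is shorter, purely covering-theoretic, needs no trace computation, and makes the bijection between spin structures and lifts transparent; the paper's proof is more explicit (it tells you which of $\pm A$ lies in $\hat\Gamma$) and its trace-sign mechanism is reused elsewhere in \S5. Two small points you should make explicit: the Galois correspondence classifies intermediate covers together with the projection from $\SL$ (subgroups only up to conjugacy once that projection is forgotten), whereas a spin structure carries no such projection; your uniqueness step is nevertheless valid because index-two subgroups of $\tilde\Gamma$ are normal — equivalently, double covers correspond to homomorphisms $\pi_1(\Gamma\backslash\PSL)\to\bZ/2$ and are isomorphic only when these coincide — so isomorphic double covers force equal subgroups. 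Finally, note that your $H$ is automatically discrete (it lies in the discrete group $\bp\inv(\Gamma)$) and has no elliptic elements, so it is a lift in the paper's sense and $s_{\hat\Gamma}$ is defined.
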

\begin{proof}
(Unigueness) Since $\Gamma$ has no elliptic elements, the lift in $\hat\Gamma$ of any element of $\Gamma$ is uniquely determined by 
the sign of its trace. 
By Lemma \ref{lem:spinsign}, the latter is uniquely determined by the spin structure $s$. This proves the uniqueness.\par
(Existence) 
We denote the pullback of the double covering $s$ of $UT(\Gamma\backslash\bH)$ by the diffeomorphism $\overline{\mu_v}$ 
by $\bp_s: E_s \to \Gamma\backslash\PSL$. 
Choose a point $*\in {\bp_s}\inv(I\bmod{\Gamma})$. 
Recall that a loop $\ell$ which generates the infinite cyclic group $\pi_1(\PSL, I)$
lifts to a path on $\SL$ which connects $I$ to $-I$. Hence the loop 
$\ell \bmod{\Gamma} \in \Gamma\backslash\PSL$ lifts to a path on $E_s$
which connects $*$ to $\tau(*)$, since $s$ is a spin structure. Here $\tau$ is the unique nontrivial covering transformation of $E_s$ as before.\par
The right multiplication of $\PSL$ defines an action 
$m: (\Gamma\backslash\PSL)\times\PSL \to (\Gamma\backslash\PSL)$, $(\bp(A)\bmod{\Gamma}, \bp(B)) \mapsto \bp(AB)\bmod{\Gamma}$. Then there exists 
a unique smooth map $\hat m: (E_s\times \SL, (*, I)) \to (E_s, *)$ such that the diagram 
\begin{equation}\label{eq:liftact}
\begin{xymatrix}{
E_s\times \SL \ar@{-->}[rr]^{\hat m}\ar[d]_{\bp_s\times\bp} %\ar[rd]
&&  E_s \ar[d]^{\bp_s}\\
(\Gamma\backslash\PSL)\times\PSL \ar[rr]^-{m} &&\Gamma\backslash\PSL\\
}\end{xymatrix}
\end{equation}
commutes. In order to prove it, we compute the image of $\pi_1(E_s\times \SL, (*, I)) = \pi_1(E_s, *) \times \pi_1(\SL, I)$ by the homomorphism $(m\circ(\bp_s\times\bp))_*$. It is clear that $(m\circ(\bp_s\times\bp))_*\pi_1(E_s, *) = {\bp_s}_*(\pi_1(E_s, *))$. From what we computed above, 
a generator of $\pi_1(\SL, I)$ is mapped to twice the generator of the
fundamental group of the fiber of the projection $\Gamma\backslash\PSL \to \Gamma\backslash\bH$, and so is included in the image
${\bp_s}_*(\pi_1(E_s, *))$. Hence we obtain the existence and uniqueness of the map $\hat m$.\par
By the uniqueness of lifts, one can prove that $\hat m\vert_{E_s\times \{I\}}: (E_s, *) \to (E_s, *)$ equals the identity, and that the two maps $(e, A, B) \mapsto \hat m(e, AB)$ and $(e, A, B) \mapsto \hat m(\hat m(w, A), B)$ from $(E_s\times \SL\times \SL, (*, I, I))$ to $(E_s, *)$ coincide with each other. 
This means $\hat m$ is a right action of $\SL$ on the space $E_s$. 
From what we computed above, we have
\begin{equation}\label{eq:-I}
\hat m(*, -I) = \tau(*) \quad (\neq *).
\end{equation}
The action $\hat m$ is transitive. In fact, for any $e\in E_s$, there exists an element $A \in \SL$ with $\hat m(*, A) \in \{e, \tau(e)\}$, since the action of $\PSL$ on $\Gamma\backslash\PSL$ is transitive. Exchanging $A$ with $-A$ by \eqref{eq:-I} if necessary, we have $\hat m(*, A) = e$.\par
Let $\hat\Gamma \subset \SL$ be the isotropy subgroup 
at the point $* \in E_s$ with respect to the action $\hat m$. 
Since the action $\hat m$ is transitive, we have $\hat\Gamma\backslash\SL \cong E_s$. Moreover $-I \not\in \hat\Gamma$ by \eqref{eq:-I}.\par
From \eqref{eq:liftact}, we have $\bp(A) \in \Gamma$ for any $A \in \hat\Gamma$. By the commutative diagram \eqref{eq:liftact}, 
if $B \in \SL$ satisfies $\bp(B) \in \Gamma$, then $\hat m(*, \pm B) \in \{*, \tau(*)\}$.
From \eqref{eq:-I}, one of $\pm B$ is mapped to $*$ by $\hat m(*, -)$, and so 
is in $\hat \Gamma$. Hence $\bp\vert_{\hat\Gamma}: \hat\Gamma \to \Gamma$ 
is a group isomorphism. In particular, $\hat\Gamma \subset \SL$ is a discrete subgroup with no elliptic elements, and a lift of $\Gamma$. \par
Consequently we obtain a double covering $\hat\Gamma\backslash\SL \cong E_s \to \Gamma\backslash\PSL \cong UT(\Gamma\backslash\bH)$, $A\bmod{\hat\Gamma} \mapsto \hat m(*, A) \mapsto \bp(A)\bmod{\Gamma} \mapsto \bp(A)v \bmod{\Gamma}$, which is isomorphic to both of $\overline{\mu_v}\circ\bp_{\hat\Gamma}$ and the given spin structure $s$.
Hence we obtain $s = s_{\hat\Gamma}$, as was to be shown. \end{proof}

\subsection{Standard cocycles for a spin hyperbolic structure}

For any groupoid cocycle $\rho: \Pi P\op\vert_{P^{(0)}} \to SL_2(\bR)$, we denote by $\bar\rho: \Pi P\op\vert_{P^{(0)}}
\to PSL_2(\bR)$ its reduction by the quotient map $\bp$.
We call a flat $SL_2(\bR)$-bundle over a compact surface {\it Fuchsian}, if its reduction to $PSL_2(\bR)$ is Fuchsian with geodesic boundary. Then let $\la_k$ and ${\la_k}^{-1}$ with 
$|\la_k| > 1$ be the eigenvalues of the $SL_2(\bR)$-part of $\rho(e^1_{k0}e^1_{k1})$. We denote
$\epsilon_k := \la_k/|\la_k| \in \{\pm1\}$ and 
$\mu_k := |\la_k|^{1/2} > 0$. Then we have $\la_k = \epsilon_k{\mu_k}^2$. 
By Proposition \ref{prop:PH} or a lemma of Keen \cite[Lemma 1, p.210]{Keen1}, i.e., Lemma \ref{lem:Keen}, 
we have 
\begin{equation}\label{eq:spinP}
\epsilon_0\epsilon_1\epsilon_2 = -1. 
\end{equation}
\par
Now suppose the reduction $\bar\rho$ is the standard cocycle
for the associated hyperbolic structure. Then, by Theorem \ref{thm:A_k}, we have 
$\bar\rho(e^1_k) = \pm\twomatrix{a_k}{b_k}{-b_k}{-a_k} \in PSL_2(\bR)$ for some $a_k, b_k \in \bR$ with $b_k > a_k > 0$. Explicitly we have $a_k = \sqrt{|b_kc_k|-1}$ and $b_k = |b_kc_k|^{1/2}$.
\begin{lem}\label{lem:signhex} If $\tilde A_k = \twomatrix{a_k}{b_k}{-b_k}{-a_k} \in SL_2(\bR)$, then we have
$$
\aligned
& D({\mu_2}^{-1})\tilde A_2D({\mu_1}^{-1})\tilde A_1D({\mu_0}^{-1})\tilde A_0 = I,\\
& D({\mu_2})\tilde A_2D({\mu_1})\tilde A_1D({\mu_0})\tilde A_0 = -I.
\endaligned
$$
\end{lem}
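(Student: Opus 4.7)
The plan is to reduce the two identities to a sign question and then pin the signs down by a direct but manageable matrix calculation.

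First, I would observe that after projection to $PSL_2(\bR)$, each of the claimed identities is exactly the boundary relation of one of the two hexagonal $2$-cells $e^2_\pm$ in the cell decomposition of $P$ from \S1. Tracing $\partial e^2_+$ and using $A_k^{-1} = A_k$ in $PSL_2(\bR)$ (which follows from $A_k{}^2 = \pm I$ in Theorem \ref{thm:A_k}), one obtains $\bD(\mu_2^{-1}) A_2 \bD(\mu_1^{-1}) A_1 \bD(\mu_0^{-1}) A_0 = 1$; tracing $\partial e^2_-$, which crosses the three seams in the reverse direction and uses the complementary boundary arcs $e^1_{k1}$, gives in the same way $\bD(\mu_2) A_2 \bD(\mu_1) A_1 \bD(\mu_0) A_0 = 1$. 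Hence each of the two products in the lemma lies in $\{\pm I\}\subset SL_2(\bR)$, and only the signs need to be determined.

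Second, I would compute one matrix entry (say the $(1,1)$ entry) of each product. A convenient reformulation is the factorization $\tilde A_k = J H(\phi_k)$ where $J = \twomatrix{0}{-1}{1}{0}$, $H(\phi) = \twomatrix{\cosh\phi}{-\sinh\phi}{-\sinh\phi}{\cosh\phi}$, and $\sinh\phi_k = a_k$, $\cosh\phi_k = -b_k$; together with the commutation rules $JD(\mu)J^{-1} = D(\mu^{-1})$ and $H(\phi)J = JH(-\phi)$, one can shuffle all three factors of $J$ to the left of each product, collecting $J^3 = -J$, and the remaining entries are expressible in terms of $\cosh$ and $\sinh$ of $\phi_k$ and $\alpha_k := l_k/4$ (so that $\mu_k = e^{\alpha_k}$). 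Substituting the explicit values for $|b_k c_k|$ and $|b_kc_k|-1$ from \eqref{eq:bkck}--\eqref{eq:bkck-1} and applying the identities $\cosh A\cosh B = \tfrac12(\cosh(A+B)+\cosh(A-B))$ and $\cosh(A-B)-\cosh(A+B) = -2\sinh A\sinh B$ should then collapse the four-term sum to $+1$ in the first equation and to $-1$ in the second.

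The main obstacle I anticipate is the algebraic bookkeeping in the sign-determination step, particularly tracking where the sign difference between the two identities arises. The structural reason is already visible after the $J$-shuffle: replacing $D(\mu_k^{-1})$ by $D(\mu_k)$ flips $\mu_k \leftrightarrow \mu_k^{-1}$ in each $H$-factor that is pushed through, and the net effect on the surviving trigonometric expression is a global sign change. Verifying this cleanly without a long computation is the delicate part; if the direct approach becomes unwieldy, a fallback is to interpret the two signs geometrically via spin-structure rotation numbers around $\partial e^2_\pm$, using Proposition \ref{prop:PH} and Lemma \ref{lem:spinsign} applied to the two candidate lifts $e^1_{k\epsilon}\mapsto D(\mu_k^{\pm 1})$.
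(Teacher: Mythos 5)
Your first step coincides with the paper's: projecting to $PSL_2(\bR)$, the two products are the boundary monodromies of the hexagons $e^2_\pm$ for the standard cocycle (using $A_k^{-1}=A_k$ in $PSL_2(\bR)$), so each lies in $\{\pm I\}$ and only the signs remain. The gap is that the sign determination -- which is the entire content of the lemma beyond this reduction -- is not actually carried out. The proposed $J H(\phi_k)$-shuffle and trigonometric collapse is only asserted (``should then collapse''), and the ``structural reason'' you give for the sign difference, namely that replacing $D({\mu_k}^{-1})$ by $D(\mu_k)$ produces a global sign change, is precisely the statement to be proved, so as written it is circular. The fallback is also insufficient: the assignment $e^1_{k\epsilon}\mapsto D({\mu_k}^{\pm1})$, $e^1_k\mapsto \tilde A_k$ is not a flat $SL_2(\bR)$-lift (indeed the lemma says one hexagon monodromy is $-I$), so Lemma \ref{lem:spinsign} does not apply to it directly; comparing it with a genuine lift, rotation-number/Poincar\'e--Hopf considerations only yield that the two hexagon signs multiply to $\epsilon_0\epsilon_1\epsilon_2=-1$, i.e.\ \eqref{eq:spinP}, and cannot decide which hexagon carries $+I$ and which $-I$.

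The paper pins the signs down with a short inequality instead of a trigonometric identity, and you could graft this onto your first step. Since each product is $\pm I$, it suffices to compare the sign of one diagonal entry of $D({\mu_2}^{\mp1})\tilde A_2D({\mu_1}^{\mp1})\tilde A_1$ with the corresponding entry of $(D({\mu_0}^{\mp1})\tilde A_0)^{-1}=-\tilde A_0D({\mu_0}^{\pm1})$ (using ${\tilde A_k}^2=-I$). From $\mu_1>1$ and $-b_k>a_k>0$ one gets $\mu_1b_1b_2>b_1b_2>a_1a_2>{\mu_1}^{-1}a_1a_2$, so the $(1,1)$-entry ${\mu_2}^{-1}({\mu_1}^{-1}a_1a_2-\mu_1b_1b_2)$ of $D({\mu_2}^{-1})\tilde A_2D({\mu_1}^{-1})\tilde A_1$ is negative, as is the $(1,1)$-entry $-a_0\mu_0$ of $-\tilde A_0D(\mu_0)$, forcing $+I$; in the second case the $(2,2)$-entry of $D(\mu_2)\tilde A_2D(\mu_1)\tilde A_1$ is the same negative quantity, while the $(2,2)$-entry $a_0\mu_0$ of $-\tilde A_0D({\mu_0}^{-1})$ is positive, forcing $-I$. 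If you prefer to keep your explicit route, note that you only need the sign of a single diagonal entry, not the exact collapse to $\pm1$, which eliminates most of the bookkeeping you were worried about.
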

\begin{proof} 
Since $\mu_1 > 1$ and $b_k > a_k > 0$, we have 
\begin{equation}\label{eq:muab}
\mu_1b_1b_2 > b_1b_2 {>} a_1a_2 > {\mu_1}^{-1}a_1a_2.
\end{equation}
The LHS's of both of the above equations equal 
$\pm I$, since each of them is the monodromy of $\bar\rho$ along the boundary of a hexagon. 
Now we compute 
$$
\aligned
& D({\mu_2}^{-1})\tilde A_2D({\mu_1}^{-1})\tilde A_1 
= \twomatrix{{\mu_2}^{-1}a_2}{{\mu_2}^{-1}b_2}{-{\mu_2}b_2}{-{\mu_2}a_2}\twomatrix{{\mu_1}^{-1}a_1}{{\mu_1}^{-1}b_1}{-{\mu_1}b_1}{-{\mu_1}a_1}\\
&= \twomatrix{{\mu_2}^{-1}({\mu_1}^{-1}a_1a_2 - \mu_1b_1b_2)}{*}{*}{*},
\endaligned
$$
whose $(1,1)$-entry is negative from \eqref{eq:muab}. 
Since $1 = \det \tilde A_k = -{a_k}^2 + {b_k}^2$, we have ${\tilde A_k}^2 = -I$, and so ${\tilde A_k}^{-1} = -\tilde A_k$. The $(1,1)$-entry of $(D({\mu_0}^{-1})\tilde A_0)^{-1} = -\tilde A_0D({\mu_0})$ is also negative. This proves the first equation. \par
Similarly we compute 
$$
\aligned
& D({\mu_2})\tilde A_2D({\mu_1})\tilde A_1 
= \twomatrix{{\mu_2}a_2}{{\mu_2}b_2}{-{\mu_2}^{-1}b_2}{-{\mu_2}^{-1}a_2}\twomatrix{{\mu_1}a_1}{{\mu_1}b_1}{-{\mu_1}^{-1}b_1}{-{\mu_1}^{-1}a_1}\\
&= \twomatrix{*}{*}{*}{{\mu_2}^{-1}({\mu_1}^{-1}a_1a_2 - \mu_1b_1b_2)},
\endaligned
$$
whose $(2,2)$-entry is negative. The $(2,2)$-entry of $(D({\mu_0})\tilde A_0)^{-1} = -\tilde A_0D({\mu_0}^{-1})$ is positive. This proves the second equation. 
\end{proof}
This lemma is compatible with the equation \eqref{eq:spinP}.

\begin{prop}\label{prop:sdstd} 
For any Fuchsian flat $SL_2(\bR)$-bundle over $P$, there exists a unique groupoid cocycle $\rho: \Pi P\op\vert_{P^{(0)}} \to SL_2(\bR)$ of the flat bundle 
satisfying the following conditions, which we call it the {\rm standard cocycle} for the flat bundle. 
\par
{\rm (i)} Its reduction $\bar\rho$ is the standard cocycle
for the associated hyperbolic structure.\par
{\rm (ii)} The $(1,1)$-entry of $\rho(e^1_k)$ is positive for any $k \in \bZ/3$.\par
{\rm (iii)} The $(1,1)$-entry of $\rho(e^1_{k0})$ is positive for any $k \in \bZ/3$.
\par
\noindent
Moreover a gauge transformation $b: P^{(0)} \to SL_2(\bR)$ preserves the standard cocycle if and only if $b$ is a constant function with values in $\pm I$. 
\end{prop}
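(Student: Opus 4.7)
The plan is to combine the $PSL_2(\bR)$-uniqueness from Proposition \ref{prop:std1} with a sign analysis governed by Lemma \ref{lem:signhex} and the spin constraint \eqref{eq:spinP}. First I would choose any groupoid cocycle $\rho_0$ representing the given Fuchsian flat bundle; its reduction $\bar\rho_0$ is a holonomy of the associated hyperbolic structure on $P$. By Proposition \ref{prop:std1}, a $PSL_2(\bR)$-valued gauge transformation brings $\bar\rho_0$ to the standard form, and by lifting each value of this gauge to $SL_2(\bR)$ arbitrarily I obtain a cocycle, still denoted $\rho_0$, satisfying condition (i). The edge values of $\rho_0$ are then pinned down only up to a sign: $\rho_0(e^1_k)\in\{\pm \tilde A_k\}$, where $\tilde A_k$ is as in Lemma \ref{lem:signhex}, and $\rho_0(e^1_{k0}),\rho_0(e^1_{k1})\in\{\pm D(\mu_k)\}$.

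Conditions (ii) and (iii) now force $\rho(e^1_k)=\tilde A_k$ (the unique lift with positive $(1,1)$-entry $\sqrt{|b_kc_k|-1}$) and $\rho(e^1_{k0})=D(\mu_k)$. The remaining sign $\sigma'_k$ in $\rho(e^1_{k1})=\sigma'_k D(\mu_k)$ is forced by matching the boundary holonomy of the given bundle: $\rho(e^1_{k0}e^1_{k1})=\sigma'_k D(\mu_k^2)$ must have trace $\epsilon_k(\mu_k^2+\mu_k^{-2})$, so $\sigma'_k=\epsilon_k$. All nine edge values are now determined, so uniqueness is immediate; what remains is to verify the cocycle condition around the hexagons $e^2_\pm$. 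For $e^2_+$, whose boundary involves only the $\tilde A_k$ and $D(\mu_k)$, the monodromy is $I$ by the first identity of Lemma \ref{lem:signhex} (after cyclic conjugation by $\tilde A_0$). For $e^2_-$, the boundary involves the $\tilde A_k$ and $\epsilon_k D(\mu_k)$; factoring out $\epsilon_0\epsilon_1\epsilon_2$ and applying the second identity of Lemma \ref{lem:signhex} gives $\epsilon_0\epsilon_1\epsilon_2\cdot(-I)$, and the two minus signs cancel by \eqref{eq:spinP}.

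For the gauge stabilizer, suppose $\rho^b=\rho$ for some $b:P^{(0)}\to SL_2(\bR)$. The fixed-point equations on $e^1_{k0}$ and $e^1_{k1}$ combine to give $b(e^0_{k0})\,D(\mu_k^2)=D(\mu_k^2)\,b(e^0_{k0})$; since $\mu_k>1$, $D(\mu_k^2)$ is a non-scalar diagonal matrix, so $b(e^0_{k0})$ is diagonal, and one then reads off $b(e^0_{k1})=b(e^0_{k0})=:D(t_k)$. The seam equation $\tilde A_k D(t_{k-1})=D(t_k)\tilde A_k$ next yields $t_k=t_{k-1}$ from the $(1,1)$-entries and $t_{k-1}t_k=1$ from the $(1,2)$-entries (using $|b_kc_k|^{1/2}>0$), forcing $t_k=\pm1$ with the same sign throughout; hence $b\equiv\pm I$. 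The main obstacle is the hexagon computation for $e^2_-$: the naive monodromy is $-I$, and only the spin identity \eqref{eq:spinP} converts it into $I$, making this precisely the point where the $SL_2(\bR)$-geometry enters in an essential way beyond the $PSL_2(\bR)$-analysis.
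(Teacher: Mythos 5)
Your construction and sign bookkeeping are correct, and the route is a genuine (if mild) reorganization of the paper's argument: the paper never leaves the gauge class of a chosen cocycle $\rho_0$ of the given bundle, writes $\rho_0(e^1_{k0})=\epsilon'_kD(\mu_k)$, $\rho_0(e^1_{k1})=\epsilon_k\epsilon'_kD(\mu_k)$, $\rho_0(e^1_k)=\epsilon''_k\tilde A_k$, and proves existence by solving the $\{\pm1\}$-valued gauge equations, whose solvability is exactly the constraint \eqref{eq:aepsilon} coming from Lemma \ref{lem:signhex}; its uniqueness and the stabilizer statement then fall out of the count of solutions. You instead write down the candidate cocycle outright and verify the two hexagon relations directly (your orientation check is right: the boundary word of $e^2_+$ gives the first identity of Lemma \ref{lem:signhex} after cyclic conjugation by $\tilde A_0$, and for $e^2_-$ the factor $\epsilon_0\epsilon_1\epsilon_2=-1$ from \eqref{eq:spinP} cancels the $-I$ of the second identity), and your stabilizer computation via commutation with $D(\mu_k^2)$ and the seam equations is a clean, self-contained alternative to the paper's counting.

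There is, however, one step missing from your existence argument. Verifying the hexagon relations shows that your $\rho$ is \emph{a} groupoid cocycle with standard reduction and with boundary traces of signs $\epsilon_k$, but the proposition asks for a cocycle \emph{of the given flat $\SL$-bundle}, i.e.\ one gauge-equivalent to $\rho_0$ over $SL_2(\bR)$. A Fuchsian $\PSL$-bundle over $P$ has four inequivalent lifts to $SL_2(\bR)$, so this is not automatic and must be argued. It does follow from what you arranged: since $\bar\rho=\bar\rho_0$ is the standard cocycle, the edgewise ratio of $\rho$ and $\rho_0$ is a $\{\pm1\}$-valued groupoid cocycle, and its values on the generators $\gamma_0$ and $\gamma_2$ of $\pi_1(P,e^0_{00})$ are trivial because the boundary trace signs of $\rho$ and $\rho_0$ agree (using $\epsilon'_k\cdot(\epsilon_k\epsilon'_k)=\epsilon_k$); a $\{\pm1\}$-valued cocycle vanishing on the vertex group is a coboundary, so $\rho=\rho_0^{b}$ for some $b:P^{(0)}\to\{\pm I\}$. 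Add this (or, as in the paper, obtain $\rho$ from $\rho_0$ by an explicit $\pm I$-valued gauge whose existence is guaranteed by \eqref{eq:aepsilon}); without it, the claim that your $\rho$ represents the given bundle, rather than another lift with the same reduction, is unproved.
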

\begin{proof} 
We may choose a holonomy $\rho$ whose reduction is the standard cocycle. 
Then we have $\rho(e^1_{k,0}) = \epsilon'_kD(\mu_k)$, 
$\rho(e^1_{k,1}) = \epsilon_k\epsilon'_kD(\mu_k)$, and
$\rho(e^1_k) = \epsilon''_k\tilde A_k$ for some $\epsilon'_k, \epsilon''_k \in \{\pm1\}$. Here we have
\begin{equation}\label{eq:aepsilon}
\epsilon'_0\epsilon'_1\epsilon'_2\epsilon''_0\epsilon''_1\epsilon''_2 = 1
\end{equation}
from Lemma \ref{lem:signhex}.\par
A gauge transformation $b: P^{(0)} \to SL_2(\bR)$ preserves the condition (i) if and only if $b(e^0_{k,0}) = D(\eta_{k,0})$ and $b(e^0_{k,1}) = D(\eta_{k,1})$ for some $\eta_{k,0}, \eta_{k,1} \in \{\pm1\}$. Then we have 
$$
\aligned
&(\rho^{b}
)(e^1_k) = \eta_{k,0}\eta_{k-1,1}\epsilon''_k\tilde A_k,\\
&(\rho^{b}
)(e^1_{k,0}) = \eta_{k,0}\eta_{k,1}\epsilon'_kD(\mu_k).
\endaligned
$$
Hence $\rho^{b}
$ satisfies the conditions (ii) and (iii) if and only if 
$$
\aligned
& \eta_{k,0}\eta_{k-1,1} = \epsilon''_k,\\
& \eta_{k,0}\eta_{k,1} = \epsilon'_k
\endaligned
$$
for any $k = 0,1,2$. By the condition \eqref{eq:aepsilon}, the equations have exactly two solutions. 
This proves the lemma. 
\end{proof}

\noindent
The constant function $P^{(0)} \to \SL$ with values in $-I$ corresponds to the covering transformation $\tau$. \par
\medskip
Next we fix a pants decomposition 
$\cP = \{\delta_i\}^{3g-3}_{i=1}$ of the surface $\Sg$ to get 
$2g-2$ pairs of pants $\{P_j\}^{2g-2}_{j_1}$. The decomposition $\cP$ 
defines a cell decomposition $\Sigma_{\cP}$ as in the previous sections.
For any Fuchsian flat $SL_2(\bR)$-bundle over $\Sg$, we can take a groupoid cocycle $\rho: \Pi \Sigma\op_{\cP}\vert_{(\Sigma_{\cP})^{(0)}} \to SL_2(\bR)$, whose restriction to $P_j$ equals the standard cocycle on $P_j$ for any $1 \leq j \leq 2g-2$.
In general, let  $\la \neq 0, \pm 1$. If a matrix $A = \twomatrix{a}{b}{c}{d} \in \SL$ satisfies $D(\la)AD(\la) = A$, then we have $a = d = 0$ and $bc = -1$. 
Hence, for any $1 \leq i \leq 3g-3$, we have $\rho(e^1_{((i))0}) = \rho(e^1_{((i))1}) = \twomatrix{0}{-{T_i}\inv}{T_i}{0}$ for some $T_i \neq 0$. Thus we obtain a coordinate $(\la_i, T_i)^{3g-3}_{i=1}
\in ((\bR\setminus [-1,1])\times (\bR\setminus\{0\}))^{3g-3}
%\cong (\bR\setminus [-1,1])^{\cP}\times (\bR\setminus\{0\})^{\cP}
$ of the spin Teichm\"uller space.
The coordinate $(\la_i, T_i)^{3g-3}_{i=1}$ can bee regarded as 
an element of the space $(\bR\setminus [-1,1])^{\cP}\times (\bR\setminus\{0\})^{\cP}$. 
\par
For each $1 \leq j \leq 2g-2$, the standard cocycle on $P_j$ is preserved only by a gauge transformation $P^{(0)}_j \to \{\pm I\}$.
We denote by $b_j: P^{(0)}_j \to \{-I\} \subset \SL$ the unique nontrivial transformation. These transformations generate an abelian group $\cB := \langle b_j; \,\, 1 \leq j \leq 2g-2\rangle \cong (\bZ/2)^{2g-2}$, and act on the $T$-part of the coordinates $T \in (\bR\setminus\{0\})^{\cP}$ by 
$$
(b_j\cdot T)(\delta_i) = 
\begin{cases}
-T(\delta_i), & \text{if $\delta_i$ is a boundary component of the subsurface given by $P_j$}, \\
T(\delta_i), & \text{otherwise}.
\end{cases}
$$
Here, if two boundary components of $P_j$ are the same $\delta_i$, 
then we have $(b_j\cdot T)(\delta_i) = T(\delta_i)$.
If we denote $b_0 := b_1b_2\cdots b_{2g-2}$, which is the unique nontrivial covering transformation of the spin structure on the whole $\Sg$, then the quotient group $\cB/\langle b_0\rangle \cong (\bZ/2)^{2g-3}$ acts on the space $(\bR\setminus\{0\})^{\cP}$ freely.
Let the group $\cB$ act on the $\la$-part $(\bR\setminus [-1,1])^{\cP}$ trivially. Then the spin Teichm\"uller space for the surface $\Sg$ is homeomorphic to the quotient 
$((\bR\setminus [-1,1])^{\cP}\times (\bR\setminus\{0\})^{\cP})/\cB$.
\par
As in \S4.1, by changing the label of $\{\delta_i\}$ if necessary, 
we glue all the pair of pants along $\cP' := \{\delta_i\}^{3g-3}_{i=g+1}$ to obtain a connected surface $\hat \Sigma$ diffeomorphic to $\Sigma_{0, 2g}$. Then, under the action of the group $\cB$, 
we may take all $T_i$'s for $i \geq g+1$ {\it positive}. 
Thus we obtain the following.
\begin{prop}\label{prop:SLstd} For any Fuchsian flat $\SL$-bundle over $\Sg$,
there exists a unique groupoid cocycle $\rho: \Pi \Sigma\op_{\cP}\vert_{(\Sigma_{\cP})^{(0)}} \to SL_2(\bR)$ satisfying the following conditions, which we call it {\rm the standard cocycle} for the flat bundle on $\Sigma_{\cP}$.\par
{\rm (i)} The restriction of $\rho$ to the pair $P_j$ of pants equals the standard cocycle on $P_j$ for any $1 \leq j \leq 2g-2$.
\par
{\rm (ii)} For any $g+1 \leq i \leq 3g+3$, we have $\rho(e^1_{((i))0}) = \epsilon_i\rho(e^1_{((i))1}) = \twomatrix{0}{-{T_i}\inv}{T_i}{0}$ for some $T_i > 0$.
\end{prop}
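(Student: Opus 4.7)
\medskip
\noindent\textbf{Proof proposal.}
The plan is a two-step argument: first normalize the restriction of the cocycle to each pair of pants using Proposition \ref{prop:sdstd}, and then use the residual $\cB$-action to fix the signs of the coefficients $T_i$ for $i \geq g+1$, exploiting the fact that these indices cut $\Sg$ into a \emph{connected} genus-zero surface.

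For existence, I would start with any groupoid cocycle $\rho_0 : \Pi\Sigma\op_\cP\vert_{(\Sigma_\cP)^{(0)}} \to \SL$ representing the given Fuchsian flat bundle. Applying an independent gauge transformation supported on the $0$-skeleton of each pair of pants $P_j$, Proposition \ref{prop:sdstd} lets me bring $\rho_0\vert_{P_j}$ into standard form for every $j$, producing a cocycle $\rho$ satisfying (i). This cocycle determines coefficients $T_i \in \bR\setminus\{0\}$, and the remaining freedom is exactly the group $\cB = \langle b_1,\dots,b_{2g-2}\rangle$, acting on the sign pattern $(\sign T_i)_{i=1}^{3g-3}$ as described in the text.

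The main combinatorial step is the following observation. Since gluing the $P_j$'s along $\cP' = \{\delta_i\}_{i=g+1}^{3g-3}$ yields the connected genus-zero surface $\hat\Sigma$, the $2g-3$ curves in $\cP'$ form a spanning tree $T$ of the dual graph of $\cP$, whose $2g-2$ vertices are the pants. The restricted action of $\cB$ on $(\bZ/2)^{\cP'}$ is then given by the vertex--edge incidence map of $T$; elementary tree theory shows that the kernel of this map consists exactly of the constant labellings, i.e.\ $\langle b_0\rangle$. Hence the induced action of $\cB/\langle b_0\rangle \cong (\bZ/2)^{2g-3}$ on the sign patterns $(\sign T_i)_{i=g+1}^{3g-3} \in (\bZ/2)^{2g-3}$ is free, and by equal cardinality it is also transitive. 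So there is a unique element of $\cB/\langle b_0\rangle$ sending the given sign pattern to the all-positive one, which upgrades $\rho$ to a cocycle satisfying both (i) and (ii).

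For uniqueness, suppose two cocycles $\rho_1,\rho_2$ both satisfy (i) and (ii). By the last clause of Proposition \ref{prop:sdstd}, on each pants they differ by a constant gauge transformation with values in $\pm I$, so globally they differ by an element $b \in \cB$. Condition (ii) is preserved on both sides, hence $b$ acts trivially on $(\sign T_i)_{i=g+1}^{3g-3}$, which by the freeness statement above forces $b \in \langle b_0\rangle$. But the gauge formula $(\rho^{b_0})(e) = (-I)\inv\rho(e)(-I) = \rho(e)$ shows that $b_0$ acts trivially on cocycles, so $\rho_1 = \rho_2$. The only real obstacle is the combinatorial identification of the kernel of the $\cB$-action with $\langle b_0\rangle$, which follows cleanly once one recognizes $\cP'$ as a spanning tree of the dual graph; the rest of the argument is bookkeeping on top of Proposition \ref{prop:sdstd}.
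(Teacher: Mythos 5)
Your proposal is correct and follows essentially the same route as the paper: normalize the cocycle on each pair of pants via Proposition \ref{prop:sdstd}, identify the residual gauge freedom with $\cB \cong (\bZ/2)^{2g-2}$, and use the fact that gluing along $\cP' = \{\delta_i\}_{i=g+1}^{3g-3}$ yields a connected genus-zero surface to fix the signs $T_i>0$ for $i\geq g+1$ uniquely modulo $b_0$, which acts trivially on cocycles. Your spanning-tree/incidence-map formulation merely makes explicit what the paper phrases as attaching one pair of pants at a time together with the free action of $\cB/\langle b_0\rangle$.
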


In particular, for $1 \leq i \leq g$, we have $\rho(e^1_{((i))0}) = \rho(e^1_{((i))1}) = \twomatrix{0}{-{T_i}\inv}{T_i}{0}$ for some $T_i \neq 0$.
By Theorem \ref{thm:twistparam}, $\tau_i := -2\log|T_i|$ is the twisting parameter along $\delta_i$ for any $1 \leq i \leq 3g-3$. 
The set of signs $\{T_i/|T_i|\}^g_{i=1}$ indicates a choice among the $2^g$ spin structures on $\Sg$, as was stated in the end of \S4.1.
\par

\section{{Appendix: The Shimura isomorphism and symplectic forms on the Teichm\"uller space}}
Up to some multiplicative constant, the Weil-Petersson symplectic form $\WP$ in Wolpert \cite{W2} equals the Atiyah-Bott-Goldman symplectic form defined as a cup product on the twisted cohomology group in Goldman \cite{G1}. On the Lie algebra $\sltwo$, we can take the Killing form $B$ and the trace form $\Btr$ of the standard representation on $\bR^2$ as invariant non-degenerate symmetric forms. We denote by $\ABG$ and $\ABGtr$ the Atiyah-Bott-Goldman symplectic form using $B$ and $\Btr$, respectively. Since $B = 4\Btr$, we have $\ABG = 4\ABGtr$. It is $\ABGtr$ that we compute in this paper. In this section, by reading the paper \cite{G1} in detail, and comparing it with Wolpert's works \cite{W1, W2},
we prove %the following.
\begin{prop}\label{prop:WPABG}
$$
2\WP = \ABG = 4\ABGtr.
$$
\end{prop}
\noindent
The key ingredient of the proof is Wolpert's argument \cite[\S4]{W1}
based on Hejhal's result \cite{H}.\par
Let $z$ be the standard coordinate of the complex line $\bC$, 
which is divided into three parts: $\bH = \{z \in \bC; \,\, \Im z > 0\}$, 
$\bR$ and $\bL := \{z \in \bC; \,\, \Im z < 0\}$. Here we denote $i = \sqrt{-1}$, $y = \Im z = (z-\zb)/2i$ and $\bC P^1 = \hat\bC = \bC\cup \{\infty\}$ in order to refer some fundamental facts on quasi-conformal mappings \cite{A}.
Let $\Gamma \subset \PSL$ be a Fuchsian group such that $M = \bH/\Gamma$ is a closed Riemann surface of genus $g$, and 
$\mathcal{F} \subset \bH$ a fundamental domain of the action of $\Gamma$. The tangent space of the Teichm\"uller space $\mathcal{T}_g$ at a point over the isomorphism class of $M$ is regarded as the cohomology group $H^1(M; K^{\otimes (-1)})$. 
Here $K = K_M$ is the holomorphic cotangent bundle of the Riemann surface $M$. 
Its Serre dual is the space $H^0(M; K^{\otimes 2})$ of holomorphic quadratic differentials on $M$, which is identified with the space of $\Gamma$-invariant holomorphic quadratic differentials on $\bH$. 
\par

\subsection{The Weil-Petersson metric and the Shimura isomorphism}

The Weil-Petersson Hermitian metric on the Teich\"uller space $\mathcal{T}_g$ is induced by a Hermitian inner product on the space $H^0(M; K^{\otimes 2})$ coming from the hyperbolic metric $dA = (\Im z)^{-2}dxdy = -2i(z-\zb)^{-2}dz\wedge d\zb$. For two $\Gamma$-invariant holomorphic quadratic differrentials $\Xi = \xi(z)dz^{\otimes 2}$ and $\Psi = \psi(z) dz^{\otimes 2}$ on $\bH$, Goldman \cite[p.212]{G1} and Wolpert \cite[p.210]{W2} define the same Hermitian product 
$$
h(\Xi, \Psi) = \langle \Xi, \Psi\rangle = \int_{\mathcal{F}}\xi(z)\overline{\psi}(z) (\Im z)^{2}dxdy = -\frac14\int_{\mathcal{F}}\Xi\cdot (dA)\inv\cdot\overline{\Psi}.
$$
Here we mean by the symbol $\langle \Xi, \Psi\rangle$ the Hermitian product in \cite{G1}, though it means $\frac12\Re h(\Xi, \Psi)$ in \cite{W2}.  In this section we denote
$$
\hat h(\Xi) := (\Im z)^2\overline{\xi(z)}d\zb\otimes ({\pa}/{\pa z})
$$
for any $\Gamma$-invariant holomorphic quadratic differrential $\Xi = \xi(z)dz^{\otimes 2} \in H^0(M; K^{\otimes 2})$. The image $\hat h(H^0(M; K^{\otimes 2}))$ is called the space of harmonic Beltrami differentials. Taking their Dolbeault cohomomology classes, we get an anti-linear isomorphism $H^0(M; K^{\otimes 2}) \to H^1(M; K^{\otimes (-1)})$, $\Xi \mapsto [\hat h(\Xi)]$. In \cite[p.228]{W2} the Weil-Petersson Hermitian product on the tangent space is given by 
$$
h([\hat h(\Xi)], [\hat h(\Psi)])  
= \int_{\mathcal{F}}\overline{\xi(z)}\psi(z)(\Im z)^{-2}dxdy
= \overline{h(\Xi, \Psi)}
$$
for any $\Xi$ and $\Psi \in H^0(M; K^{\otimes 2})$. 
Hence we adopt the anti-linear isomorphism $\hat h$ as in \cite{W2}, which is different from that in \cite[p.212]{G1}. 
The Weil-Petersson symplectic form $\WP$ is given by 
\begin{equation}\label{eq:dfnwP}
\WP([\hat h(\Xi)], [\hat h(\Psi)]) = -2\Im h([\hat h(\Xi)], [\hat h(\Psi)]) 
= 2 \Im h(\Xi, \Psi)
\end{equation}
in \cite[p.228]{W2}.
\par
Next we consider the column vector field $Z = \begin{pmatrix} z^2\pa/\pa z \\ z\pa/\pa z \\ \pa/\pa z \end{pmatrix}$ in \cite[\S2.3]{G1} in detail. In order to make the situation clear, we use the classical basis
$$
E = \twomatrix{0}{1}{0}{0}, \quad
H = \twomatrix{1}{0}{0}{-1}, \quad
F = \twomatrix{0}{0}{1}{0}
$$
of $\sltwoc$. The Killing form $B$ of $\sltwoc$ is represented by the matrix
$$
\begin{pmatrix}0 & 0 & 4\\ 0 & 8 & 0 \\ 4 & 0 & 0
\end{pmatrix}
$$
with respect to the basis $\{E, H, F\}$. 
It induces an isomorphism $\tilde B: \sltwoc^* \to \sltwoc$. We denote the dual basis of $\{E, H, F\}$ by 
$\{E^*, H^*, F^*\}$. Then we have $\tilde B(E^*) = \frac14F$, 
$\tilde B(H^*) = \frac18F$ and $\tilde B(F^*) = \frac14E$. 
The Killing form $B$ naturally defines a symmetric pairing $B^*$
by $B^*(-,-) = B(\tilde B(-), \tilde B(-))$, which is represented by the matrix
$$
\begin{pmatrix}0 & 0 & 1/4\\ 0 & 1/8 & 0 \\ 1/4 & 0 & 0
\end{pmatrix}
$$
with respect to the basis $\{E^*, H^*, F^*\}$. 
The action of $PSL_2(\bC)$ on the Riemann sphere $\bC P^1$ defines a Lie algebra isomorphism $\imath: \sltwoc \to H^0(\hat\bC; T\hat\bC)$. One can compute 
$$
\imath(pE+qH+rF)= (-rz^2+2qz+p)\dfrac{\pa}{\pa z}, 
$$
which means $Z = (-z^2F^* + 2zH^* + E^*)(\pa/\pa z)$
from the original definition of the column vector $Z$ in the third paragraph of \cite[2.3]{G1}. Hence we obtain an $SL_2(\bC)$-invariant 
$\sltwoc_{\mathrm{Ad}}$-valued holomorphic vector field
$$
\aligned
\tilde B(Z) 
& = (-\frac14z^2E+\frac14zH+\frac14F)(\pa/\pa z)\\
&= -\frac14\twomatrix{-z}{z^2}{-1}{z}\otimes \left(\dfrac{\pa}{\pa z}\right)
\in H^0(\hat\bC; \sltwoc_{\mathrm{Ad}}\otimes T\hat\bC).
\endaligned
$$
Since the $1$-form $Z$ 
%$d\mathbf{z} = \begin{pmatrix}z^2\\ z\\ 1\end{pmatrix}dz$
has values in $\sltwoc^*$, so we compute 
$$
\aligned
&B^*(Z\wedge\overline{Z})\\
&= B((-z^2F^* + 2zH^* + E^*), (-z^2F^* + 2zH^* + E^*))(\pa/\pa z)\wedge (\pa/\pa \zb)\\
&= (-\frac1{4}z^2+\frac2{4}z\zb-\frac1{4}\zb^2)(\pa/\pa z)\wedge (\pa/\pa \zb) = -\frac14(z-\zb)^2(\pa/\pa z)\wedge (\pa/\pa \zb)\\
\endaligned
$$
Hence we have
$
B^*((Z\cdot\Xi)\wedge(\overline{Z}\cdot\overline{\Psi}))
= -2i\xi(z)\overline{\psi(z)}y^2dx\wedge dy,
$
and so 
\begin{equation}\label{eq:pairB}
\aligned
\langle \Xi, \Psi\rangle &= \frac{i}{2}\int_MB^*((Z\cdot\Xi)\wedge(\overline{Z}\cdot\overline{\Psi}))\\
&= \frac{i}{2}\int_MB((\tilde B(Z)\cdot\Xi)\wedge(\overline{\tilde B(Z)}\cdot\overline{\Psi}))\\
&= 2i\int_M\trace((\tilde B(Z)\cdot\Xi)\wedge(\overline{\tilde B(Z)}\cdot\overline{\Psi})),
\endaligned
\end{equation}
since $B^*(-,-) = B(\tilde B(-),\tilde B(-)) 
= 4\trace(\tilde B(-)\tilde B(-))$.
%\par

The Shimura isomorphism $H^0(M; K^{\otimes 2}) \to H^1(\Gamma; \sltwo_{\mathrm{Ad}})$ maps $\Xi = \xi(z)dz^{\otimes 2} \in 
H^0(M; K^2)$ to the cohomology class of the group cocycle
\begin{equation}\label{eq:cShi}
u'_{\Xi}: A \in \Gamma \mapsto -\frac14\int^{Az_0}_{z_0}\xi(z)\twomatrix{-z}{z^2}{-1}{z} dz = \int^{Az_0}_{z_0}\tilde B(Z)\cdot\Xi
\in \sltwoc.
\end{equation}
From Proposition in \cite[2.5]{G1} we obtain
$$
%\begin{equation}\label{eq:ShidR}
h(\Xi, \Psi) = \langle \Xi, \Psi\rangle = \frac{i}{2}B_*\left(
[u'_\Xi] \cup \overline{[u'_\Psi]}\right)\cdot [M].
%\end{equation}
$$
By \eqref{eq:dfnwP} this implies 
\begin{equation}\label{eq:ShiWP}
\WP(\hat h(\Xi), \hat h(\Psi)) = \Re B_*\left(
[u'_\Xi] \cup \overline{[u'_\Psi]}\right)\cdot [M].
\end{equation}
\par

\subsection{Quasi-conformal mappings}
%\smallskip
Next we recall some fundamental facts on quasi-conformal mappings stated in \cite{A} to understand the last sentence of \cite[2.4]{G1} `It has been shown by various authors...' in detail. 
We denote by $\mu(f) := f_{\zb}/f_z$ the complex dilatation of a quasi-conformal mapping $f: \hat\bC \to \hat\bC$. 
Let $g$ and $h: \hat\bC \to \hat\bC$ be quasi-conformal mappings. 
Then the chain rule implies 
\begin{equation}\label{eq:ChainR}
\mu(h)\circ g = 
\frac{\mu(h\circ g) - \mu(g)}{1 - \overline{\mu(g)}\mu(h\circ g)}\,
\frac{g_z}{\overline{g}_{\zb}}
\end{equation}
\cite[p.9, (6)]{A}.  If we define 
$$
A^*\mu := (\mu\circ A)\frac{\overline{A'}}{A'}
$$
for $A \in PSL_2(\bC)$ and $\mu \in L^\infty(\hat\bC)$,
we have $\mu(f\circ A) = A^*(\mu(f))$ and
$$
\mu(f\circ A\circ f\inv)\circ f = \frac{A^*(\mu(f)) - \mu(f)}{1 - \overline{\mu(f)}A^*(\mu(f))}\, \frac{f_z}{\overline{f}_{\zb}}
$$
by \eqref{eq:ChainR}. 
Hence $f\circ A\circ f\inv \in PSL_2(\bC)$ is equivalent to $A^*(\mu(f)) = \mu(f)$. \par
Suppose $\nu \in L^\infty(\hat\bC)$ satisfies $A^*\nu = \nu$. 
For $|t|\ll 1$, there is a unique quasi-conformal mapping $f^{t\nu}: \hat\bC \to \hat\bC$ which fixes $0$, $1$ and $\infty$ pointwise, and satisfies $\mu(f^{t\nu}) = t\nu$. We have $A^{t\nu}:= f^{t\nu}\circ A\circ (f^{t\nu})\inv \in PSL_2(\bC)$. Similarly to \cite[p.77]{A}, we denote 
$$
\overset\cdot{F}[\nu] := \frac{d}{dt}\Bigr\vert_{t=0}F(t\nu)
$$
for a function $F(t\nu)$ of $t\nu$. 
We have $\overset\cdot{A}[\nu]A\inv \in \sltwoc$. 
Some straight-forward computation shows that 
$$
\frac{d}{dt}\Bigr\vert_{t=0}(f^{t\nu})\inv(z) = - \overset\cdot{f}[\nu](z),
$$
and so that
$$
\overset\cdot{A}[\nu](z) = \overset\cdot{f}[\nu](Az) - (A'(z))\overset\cdot{f}[\nu](z).
$$
Since $A'\circ A\inv = ((A\inv)')\inv$ by the chain rule, 
we obtain 
$$
\overset\cdot{A}[\nu](A\inv z) = \overset\cdot{f}[\nu](z)
- \left(((A\inv)')\inv\cdot (\overset\cdot{f}[\nu]\circ A\inv)\right)(z),
$$
or equivalently
\begin{equation}\label{eq:fcobound}
\imath\left(\overset\cdot{A}[\nu]\circ A\inv\right)
= \overset\cdot{f}[\nu]({\pa}/{\pa z}) - (A\inv)^*\left(\overset\cdot{f}[\nu]({\pa}/{\pa z})\right).
\end{equation}
On the other hand, as was shown in \cite[p.60]{A}, we have 
\begin{equation}\label{eq:varorig}
\overset\cdot{f}[\nu](\zeta) = -\frac1\pi\int\hskip-7pt\int_{\bC}\nu(z)R(z, \zeta)dxdy,
\end{equation}
where
$$
R(z,\zeta) = \frac1{z-\zeta} - \frac\zeta{z-1} +\frac{\zeta-1}{z}
= \frac{\zeta(\zeta-1)}{z(z-1)(z-\zeta)}.
$$
\par
Now let $\la(z) ({\pa}/{\pa z})\otimes d\zb$, $z \in \bH$, be a $\Gamma$-invariant Beltrami differential, whose Dolbealt class $[\la(z) ({\pa}/{\pa z})\otimes d\zb]$ is a tangent vector of the Teichm\"uller space $\mathcal{T}_g$ at a point corresponding to $\Gamma$. 
We define 
$$
\hat\la(z) := \begin{cases}
\la(z), & \text{if $z \in \bH$},\\
0, & \text{if $\Im z = 0$},\\
\overline{\la(\zb)}, & \text{if $z \in \mathbb{L}$},
\end{cases}
\quad\text{and}\quad
\tilde\la(z) := \begin{cases}
\la(z), & \text{if $z \in \bH$},\\
0, & \text{if $\Im z \leq 0$}.
\end{cases}
$$
Since $f^{t\hat\la}(\bH) = \bH$ for any $|t|\ll 1$,  we have $f^{t\hat\la}\Gamma(f^{t\hat\la})\inv \subset \PSL$, which equals the deformation of the Fuchsian group $\Gamma$ along the path $t\la$.
Hence the map
$$
\hat c(\la): A \in \Gamma \mapsto \overset\cdot{A}[\hat\la]A\inv \in \sltwo
$$
is a well-defined cocycle, whose cohomology class $[\hat c(\la)] \in H^1(\Gamma; \sltwo_{\mathrm{Ad}})$ is identified with the tangent vector $[\la]$. Hence the Atiyah-Bott-Goldmann symplectic form $\ABG$ is given by 
\begin{equation}\label{eq:ABG}
\ABG([\la_1(z)({\pa}/{\pa z})\otimes d\zb], [\la_2(z)({\pa}/{\pa z})\otimes d\zb]) = B_*([\hat c(\la_1)]\cup [\hat c(\la_2)])\cdot [M]
\end{equation}
for any $\Gamma$-invariant Beltrami differentials $\la_1({\pa}/{\pa z})\otimes d\zb$ and $\la_2({\pa}/{\pa z})\otimes d\zb$. 
\par
The Shimura isomorphism is directly related to 
the quasi-Fuchsian group $f^{t\tilde\la}\Gamma(f^{t\tilde\la})\inv \subset PSL_2(\bC)$. The cohomology class $[\tilde c(\la)] \in H^1(\Gamma; \sltwoc_{\mathrm{Ad}}) = H^1(\Gamma; \sltwo_{\mathrm{Ad}})\otimes_{\bR}\bC$ defined by the cocycle
$$
\tilde c(\la): A \in \Gamma \mapsto \overset\cdot{A}[\tilde\la]A\inv \in \sltwoc
$$
satisfies 
\begin{equation}\label{eq:realc}
[\hat c(\la)] = 2\Re [\tilde c(\la)] \in H^1(\Gamma; \sltwo_{\mathrm{Ad}}).
\end{equation}
In fact, we have 
$
\overset\cdot{f}[\hat\la](\zeta)
= \overset\cdot{f}[\tilde\la](\zeta) + \overline{\overset\cdot{f}[\tilde\la](\overline{\zeta})}
$
from \eqref{eq:varorig}, and so 
$
\hat c(\la) = \tilde c(\la) + \overline{\tilde c(\la)}
$ 
from \eqref{eq:fcobound}.
\par
Based on Hejhal's result \cite{H}, Wolpert \cite{W1} relates the cohomology class $[\tilde c(\la)] \in H^1(\Gamma; \sltwoc_{\mathrm{Ad}})$ to the Shimura isomorphism.
It should be remarked that $\overline{\xi(\zb)}dz^{\otimes 2}$, $z \in\bL$, is a $\Gamma$-holomorphic quadratic differential for any $\Xi = \xi(z)dz^{\otimes 2} \in H^0(M; K^{\otimes 2})$.
We set $\la(z) = \la_\Xi(z) = (\Im z)^2\overline{\xi(z)}$, $z \in \bH$, i.e., 
$\la_\Xi(z)d\zb\otimes ({\pa}/{\pa z}) = \hat h(\Xi)$.
Then we have $[\hat h(\Xi)] = \hat c(\la_\Xi)$ as tangent vectors of the Teichm\"uller space.
For $|t|\ll 1$, $f^{t\tilde\la}$ equals $w^t$ for $\varphi =0$ and $\psi(z) = -\frac12\overline{\xi(\zb)}$ in \cite{W1}. Fix a point $z_0 \in \bH$. 
As was deduced by Wolpert \cite[\S4]{W1} from Hejhal's result \cite{H},
there is a path $N^t \in PSL_2(\bC)$ for $|t|\ll 1$ such that 
$$
((N^0)\inv N^t)w^tA(w^t)\inv((N^0)\inv N^t)\inv = A - \frac{t}{4}\int^{A\overline{z_0}}_{\overline{z_0}}\overline{\xi(\zb)}\twomatrix{-z}{z^2}{-1}{z}A dz + O(t^2)
$$
for any $A \in \Gamma$. Hence, if we denote $X := \dfrac{d}{dt}\Bigr\vert_{t=0}(N^0)\inv N^t \in \sltwoc$, then we have 
$$
X + \overset\cdot{A}[\tilde\la]A\inv - AXA\inv
= -\frac14\int^{A\overline{z_0}}_{\overline{z_0}}\overline{\xi(\zb)}\twomatrix{-z}{z^2}{-1}{z} dz
= -\frac14\overline{\int^{Az_0}_{z_0}\xi(z)\twomatrix{-z}{z^2}{-1}{z} dz}.
$$
In other words, the complex conjugate of the cocycle $u'_{\Xi}$ 
\eqref{eq:cShi} is cohomologous to $\tilde c(\la)$ for $\la(z) = (\Im z)^2\overline{\xi(z)}$. 
Hence, from \eqref{eq:realc}, we obtain 
\begin{equation}\label{eq:realu}
[\hat c(\la_\Xi)] = 2\Re [u'_{\Xi}] \in H^1(\Gamma; \sltwo_{\mathrm{Ad}}).
\end{equation}
We have $[u'_\Xi] \cup [u'_\Psi]
= 0$, since $dz\wedge dz = 0$, so that, from \eqref{eq:realu}, 
$$
%[\hat h(\Xi)] \cup [\hat h(\Psi)] = 
[\hat c(\la_\Xi)] \cup [\hat c(\la_\Psi)] 
= ([u'_{\Xi}]+\overline{[u'_{\Xi}]})\cup ([u'_{\Psi}]+\overline{[u'_{\Psi}]})
= 2\Re( [u'_{\Xi}]\cup \overline{[u'_{\Psi}]}).
$$
From \eqref{eq:ABG} and \eqref{eq:ShiWP}, we obtain
$$
\ABG([\hat h(\Xi)], [\hat h(\Psi)]) 
= B_*\left([\hat c(\la_\Xi)] \cup [\hat c(\la_\Psi)] \right)\cdot [M]
= 2\Re B_*( [u'_{\Xi}]\cup \overline{[u'_{\Psi}]})
= 2\WP([\hat h(\Xi)], [\hat h(\Psi)]).
$$
Because of the anti-linear isomorphism $H^0(M; K^{\otimes 2}) 
\cong H^1(M; K^{\otimes -1})$, $\Xi \mapsto [\hat h(\Xi)]$, this proves Proposition \ref{prop:WPABG}.\qed

%\par
%\bigskip
%\bigskip\noindent
%Department of Mathematical Sciences, \par\noindent
%University of Tokyo, \par\noindent
%3-8-1 Komaba, Meguro-ku, Tokyo 153-8914, Japan \par\noindent
%\texttt{e-mail:kawazumi@ms.u-tokyo.ac.jp}\par

\end{document}